\newtheorem{theorem}{Theorem}
\newtheorem{corollary}[theorem]{Corollary}
\newtheorem{lemma}[theorem]{Lemma}
\newtheorem{definition}[theorem]{Definition}
\newtheorem{proposition}[theorem]{Proposition}
\newtheorem*{theorem*}{Theorem}
\newtheorem*{remark}{Remark}
\numberwithin{theorem}{section} 
\numberwithin{corollary}{section}
\numberwithin{lemma}{section}
\numberwithin{proposition}{section}
\numberwithin{definition}{section}
\newenvironment{customthm}[1]
	{\innercustomthm}
	{\endinnercustomthm}
\begin{document}
\title[On The Rigidity Of The Roots Of Power Series]{On The Rigidity Of The Roots Of Power Series With Constrained Coefficients}

\author{Jacob Kewarth}

\begin{abstract}
Here we consider the set $\Sigma_S$ of roots of power series whose coefficients lie in a given set $S$ and how such sets of roots vary as the set $S$ varies. We give an estimate of the depth that complex roots can reach into the disc, offer some criterion for the set of roots to be connected or disconnected, and show that for two finite symmetric sets $S$ and $T$ of integers containing $1$, if $\Sigma_S = \Sigma_T$ then all of their elements between $1$ and $2\sqrt{\max(S)}+1$ must agree.
\end{abstract}

\maketitle


\section{Introduction}

\subsection{History and Outline}

Given a finite (or bounded) set $S$ let $\Sigma_S$ denote the set of roots in $\mathbb{D}$ of power series with coefficients in $S$. Dynamical interest in such sets was started by Barnsley and Harrington (see \textbf{[BHa]}) who studied the family of iterated function systems (IFS) $z \mapsto \lambda z +1$ and $z \mapsto \lambda z - 1$ for various choices of $|\lambda| < 1$. They found that an attractor $A_\lambda$ was connected if and only if $\lambda \in \Sigma_{ \{ 0, \pm 1\} }$, dubbing the set the  ``Mandelbrot set for pairs of linear maps". The set $\Sigma_{ \{ 0, \pm 1 \}}$ was shown to be connected and locally connected by Bousch (see \textbf{[Bo1], [Bo2]}). In \textbf{[Ba]} Bandt proved that $\Sigma_{ \{ 0, \pm 1 \} }$ has at least one (non-trivial) hole and in \textbf{[CKW]} Calegari, Koch, and Walker showed that there are infinitely many holes. \textbf{[SX]} Solomyak and Xu were able to certify a whole `wedge' of $\Sigma_{ \{ 0, \pm 1 \}}$ near the imaginary axis for which polynomial roots are interior points of $\Sigma_{ \{0, \pm 1 \} }$ and \textbf{[CKW]} showed in fact that interior points of $\Sigma_{ \{ 0, \pm 1 \}}$ are dense away from the real axis. In \textbf{[Ti]} Tiozzo related the set $\Sigma_{ \{\pm 1 \} }$ to the set of Galois conjugates of entropies of real quadratic polynomials.

Many variations have since appeared studying the set of roots $\Sigma_S$ for different sets of coefficients $S$. In \textbf{[BY]} the set $\Sigma_S$ was studied for sets $S$ whose elements lie on the unit circle, \textbf{[BBBP]} studied the problem for the set $S = [-g, g]$ with the additional assumption that the power series first coefficient is always 1. \textbf{[Na]} gave a broad criterion for the set of roots of power series starting with $1$ to be connected and locally connected. Motivated by interest in the family of IFS $z \mapsto \lambda z + \xi^j_n$ (where $\xi_n$ is the $nth$ root of unity $e^{2\pi i / n}$) the set of roots of power series starting with $1$ whose coefficients are $\frac{\xi_n^j - \xi_n^k}{1-\xi_n}$ have been studied extensively (see \textbf{[BHu]}, \textbf{[HI]}, and \textbf{[Na]}) and most notably has been shown to be connected, locally connected, and regular-closed for each $n \geqslant 3$.

The object of this paper is to study the rigidity of the set of such roots. We ask how different sets of coefficients yield different sets of roots. Notice that if $S = cT$ for some $c \neq 0$ then $\Sigma_S = \Sigma_T$ and so we further ask if such a criterion is necessary for $\Sigma_S$ to be equal to $\Sigma_T$. Since the roots set is invariant by scaling of the set of coefficients, we mostly consider sets $S$ which are normalized, that is, for which the minimum positive element of $S$ is $1$. Our main result towards this goal is the following:

\begin{customthm}{\ref{weakrigidity}}
Let $S$ and $T$ be finite non-empty symmetric normalized sets of integers. If $\Sigma_S = \Sigma_T$ and $k$ is an integer in $(1, 2\sqrt{\max(S)}+1)$ then $k\in S$ if and only if $k \in T$.
\end{customthm}

Our approach to studying rigidity is to first study how the geometry of $\Sigma_S$ depends on and varies with $S$. The main technical tool will be a series of criteria we develop in \textbf{Section 3} to verify whether an element $z \in \mathbb{D}$ belongs to $\Sigma_S$. 

Pictures of $\Sigma_S$ (where the elements of $S$ do not have big gaps) tend to display two prominent features. Firstly, the real line contains an interval which seems to extend far deeper into the unit disc than any complex roots can reach, and secondly there appears to be an annulus contained in $\Sigma_S$. Using the criteria of $\textbf{Section 3}$ we certify both claims for appropriate sets $S$. In particular, if we consider:

\begin{definition}
If $S$ is a non-empty normalized finite set of real numbers we define:
$$\rho_{inn}(S) = \inf \{R > 0 \mid A_R \subseteq \Sigma_S \} $$
where $A_R$ is the annulus $\{ R \leqslant |z| < 1\}$.
\end{definition}

When discussing coefficient sets which $S$ which are normalized it helps to consider how far apart consecutive elements of $S$ are, we call the total gap of $S$ the maximum such distance. Then when the total gap of $S$ is small enough we prove that the subset $\Sigma_S^1$ of $\Sigma_S$ of roots of power series with coefficients in $S$ which start with $1$ contains an annulus giving a upper bound on $\rho_{inn}$.

\begin{customthm}{\ref{annulus}}
Let $S$ be a symmetric non-empty finite set of real numbers with total gap $1$ and let $M = \max_{s \in S} |s|$. Then $\Sigma^1_S$ contains the annulus $\frac{1}{\sqrt{M}} \leqslant |z| < 1$. In particular, we have that:
$$\rho_{inn}(S) \leqslant \frac{1}{\sqrt{M}} $$
\end{customthm}

In $\textbf{Section 4}$ we consider the largest possible annulus containing the complex roots of $\Sigma_S$. The reason for avoiding real roots is that real roots of $\Sigma_S$ tend to get far deeper than complex roots. Indeed, real roots may get as deep at $\frac{1}{M+1}$ whereas it is well known that complex roots may not be smaller than $\frac{1}{\sqrt{M}+1}$. This causes the real roots to stick out giving the appearance of an `antenna'.

\begin{definition}
If $S$ is a non-empty normalized finite set of real numbers we define:
$$ \rho_{out} = \sup \{ R > 0 \mid (\Sigma_S \setminus \mathbb{R}) \subseteq A_R \} $$
where $A_R$ is the annulus $\{ R \leqslant |z| < 1\}$.
\end{definition}

\begin{remark}
Note that by definition we have $\rho_{out} \leqslant \rho_{inn}$.
\end{remark}

To approach the problem, we first develop a general inequality on the size of all roots of a power series which may be of independent interest:

\begin{customthm}{\ref{prod ineq}}
If $S$ is a (possibly infinite) bounded non-empty set of (possibly complex) numbers and $P(z) = \sum_{j=0}^\infty p_jz^j$ is a power series with coefficients in $S$ and with $p_0 =1$ then for any roots $\lambda_1, ..., \lambda_k \in \mathbb{D}$ of $P(z)$, we have that:

$$\prod_{j=1}^k (|\lambda_j|^{-1} - 1) \leqslant \sup_j |p_j|.$$

In particular, $P(z)$ does not have $k$ roots $\lambda_1, ..., \lambda_k$ all with $|\lambda_j| < \frac{1}{\sup_j|p_j|^{1/k} + 1}$.

\end{customthm}

Using this inequality we given an alternative proof of the well known estimate that $\rho_{out} \geqslant \frac{1}{\sqrt{M}+1}$. The remainder of the first part of section 4 is dedicated to showing this estimate is never best possible. Namely, we will prove the following:

\begin{customthm}{\ref{rho-out}}
Let $S$ be a finite non-empty symmetric normalized set of real numbers and let $M = \max(S)$. Then there is no sequence of numbers $\lambda_n \in \Sigma_S \setminus \mathbb{R}$ such that $|\lambda_n| \rightarrow \frac{1}{\sqrt{M}+1}$. Moreover, we have that:

$$\frac{1}{\sqrt{M}+1} < \rho_{out}(S) \leqslant \frac{1}{\sqrt{M+k}} $$
for $k = \max \{ s \in S \mid 1 \leqslant s < 2\sqrt{M} + 1 \}$.
\end{customthm}

The second part of section 4 gives a brief estimates on how deep roots can get at each angle $\theta$.

In \textbf{Section 5} we use the work of \textbf{[Na]} to give the following classification for connectedness:

\begin{customthm}{\ref{conn-crit}}
If $S$ is a normalized symmetric set of integers containing $0$ then:
	\begin{enumerate}
		\item If the total gap of $S$ is $3$ or more then $\Sigma_S$ is disconnected.
		\item If the total gap of $S$ is $1$ or $2$ and if $M = \max(S) \geqslant 40$ then $\Sigma_S$ is connected and locally connected.
	\end{enumerate}
\end{customthm}

Finally, \textbf{Section 6} explores the question of rigidity offering a proof of \textbf{Theorem \ref{weakrigidity}} along with a geometric interpretation of the result. Additionally we show a weaker sort of quasi-rigidity for the larger coefficients:

\begin{customthm}{\ref{quasirigidity}}
If $S$ and $T$ are finite non-empty normalized sets of integers and if $\Sigma_S = \Sigma_T$ then $k \in S$ implies that either $k$, $k-1$, or $k+1$ is in $T$.
\end{customthm}

\section{Definitions and Basic Properties}

We begin by introducing some basic definitions and review well known elementary properties about sets of roots.

\begin{definition}
Let $S$ be a non-empty bounded set of complex numbers. We define the sets:

\[
\Sigma_S = \left \{ z \in \mathbb{D} \mid \exists p_0, p_1, ... \in S, p_0 \neq 0, \sum_{j=0}^\infty p_jz^j = 0 \right \} 
\]

\[
\Sigma_S^1 = \left \{ z \in \mathbb{D} \mid \exists p_0, p_1, ... \in S, p_0 = 1, \sum_{j=0}^\infty p_jz^j = 0 \right \} 
\]

\[
\Sigma_S^{(fin)} = \left \{ z \in \mathbb{D} \mid \exists p_0, p_1, ..., p_n \in S, p_0 \neq 0, \sum_{j=0}^n p_jz^j = 0 \right \}
\]

\end{definition}

The above definition allows for the case that $S$ is infinite, however in most applications we will take $S$ to be a finite set. Boundedness of $S$ guarantees that even if it is infinite the radius of convergence of any power series in $S$ will be at least $1$ and so we avoid convergence issues by restricting to the unit disc. Unless explicitly stated otherwise we will always assume that $S$ is finite.

For a given polynomial $f(z)$ of degree $n$ with coefficients in $S$, we can consider the power series $P(z) = \sum_{j=0}^\infty f(z)z^{jn}$. This power series shares the same roots as $f(z)$ inside the unit disc and hence $\Sigma_S$ will contains the roots of all polynomials with coefficients in $S$ which lie in $\mathbb{D}$. In fact, we can equivalently view $\Sigma_S$ as the closure of the set of roots of polynomials with coefficients in $S$ restricted to the unit disc.

A well known estimate about roots of polynomials says that if $P(z) = p_0 + p_1z + ... + p_nz^n$ is a polynomial then the roots of $P$ all lie in the annulus $\frac{1}{M+1} \leq |z| \leq M+1$ for $M = \max \{\frac{|p_j|}{|p_i|} \mid 0 \leqslant i, j \leqslant n, p_i \neq 0$\}. 

Since the roots of power series with coefficients in $S$ are limits of the roots of polynomials with coefficients in $S$ we get the following lemma:

\begin{lemma}
	\label{outerannulus}
	 If $S$ is a non-empty finite set of complex numbers other than $\{ 0 \}$ then:
	\begin{enumerate}
		\item $\Sigma_S$ is contained in the annulus $\frac{1}{M+1} \leq |z| < 1$ for $M = \max_{t \neq 0, s, t \in S} \frac{|s|}{|t|}$. 

		\item If $S$ contains only real numbers and the signs of the minimum and maximum non-zero elements of $S$ (in absolute value) have different signs then the bound is achieved by a multiple of the power series $1 - Mz - Mz^2 - Mz^3 - ...$.
	\end{enumerate}
\end{lemma}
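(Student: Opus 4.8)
The plan is to derive both parts from the classical polynomial root bound recalled immediately above the lemma, together with the description of $\Sigma_S$ as the closure (in $\mathbb{D}$) of the roots of polynomials with coefficients in $S$.

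\emph{Part (1).} Let $\lambda\in\mathbb{D}$ be a root of a polynomial $f$ with coefficients in $S$. We may assume $\lambda\neq 0$, since $0\notin\Sigma_S$ (the definition forces $p_0\neq 0$). Write $f(z)=z^m g(z)$ with $g(0)\neq 0$; then the coefficients of $g$ still lie in $S$, $g(0)\in S$ is nonzero, and $\lambda$ is a root of $g$. The quoted estimate applied to $g$ gives $|\lambda|\geq \frac{1}{M_g+1}$ where $M_g=\max\{|g_j|/|g_i| : g_i\neq 0\}$, and since every $g_i,g_j$ lies in $S$ with $g_i\neq 0$ we have $M_g\leq M$, hence $|\lambda|\geq\frac{1}{M+1}$. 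Because $\Sigma_S$ is the closure of such roots intersected with $\mathbb{D}$ and the region $\{|z|\geq\frac{1}{M+1}\}$ is closed, every element of $\Sigma_S$ satisfies $|z|\geq\frac{1}{M+1}$; the outer bound $|z|<1$ is immediate from $\Sigma_S\subseteq\mathbb{D}$.

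\emph{Part (2).} Choose $a\in S$ with $|a|$ minimal among the nonzero elements of $S$ and $b\in S$ with $|b|$ maximal, so that $M=|b|/|a|$, and (by hypothesis) with $\operatorname{sgn}(a)=-\operatorname{sgn}(b)$. Consider the power series
\[
P(z)=a\bigl(1-Mz-Mz^2-Mz^3-\cdots\bigr)=a-aM\,\tfrac{z}{1-z}.
\]
Its constant term is $a\in S$, and since $aM=\operatorname{sgn}(a)\,|b|$, every coefficient of $z^j$ for $j\geq 1$ equals $-aM=\operatorname{sgn}(b)\,|b|=b\in S$; thus $P$ is a power series with coefficients in $S$. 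Solving $P(z)=0$ with $a\neq 0$ gives $1-Mz/(1-z)=0$, i.e. $z=\frac{1}{M+1}\in(0,1)\subseteq\mathbb{D}$. Hence $\frac{1}{M+1}\in\Sigma_S$, so the lower bound of part (1) is attained, and it is attained by this multiple of $1-Mz-Mz^2-\cdots$.

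I do not anticipate a genuine obstacle: the essential content is the classical polynomial estimate, which is taken as known. The only points needing a little care are the reduction to polynomials with nonzero constant term in part (1) (together with the remark that $0\notin\Sigma_S$), and the sign bookkeeping in part (2) required to confirm that the constructed series really does have all of its coefficients in $S$ and is the asserted scalar multiple of $1-Mz-Mz^2-\cdots$.
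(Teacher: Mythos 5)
Your proof is correct and follows exactly the route the paper has in mind: the paper does not write out a proof of this lemma, but instead states the classical polynomial root bound in the paragraph just above it and observes that $\Sigma_S$ is the closure in $\mathbb{D}$ of the set of roots of polynomials with coefficients in $S$; the lemma is presented as an immediate consequence. Your write-up supplies the details the paper leaves implicit (the reduction to nonzero constant term, the passage to the closure, and the sign bookkeeping that makes $a(1-Mz-Mz^2-\cdots)$ have coefficients $a,b,b,\dots\in S$ with root $\tfrac{1}{M+1}$), and these are all correct.
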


\begin{definition}
We say that $S$ is symmetric if $S = -S$.
\end{definition}

If $S$ is symmetric then $\Sigma_S$ has a symmetry that $z \in \Sigma_S$ if and only if $-z \in \Sigma_S$. Further, if $S$ contains only real numbers then $z \in \Sigma_S$ if and only if $\overline{z} \in \Sigma_S$. Thus for symmetric sets of real numbers one can study $\Sigma_S$ only in the first quadrant of the plane.

In proofs we will often consider consecutive elements $s, t$ of $S$ and how far apart they are. Since $\Sigma_S = \Sigma_{cS}$ for any constant $c \neq 0$ it often proves helpful to assume the smallest (non-zero positive) element of $S$ is $1$. 

\begin{definition}
A finite non-empty set of real numbers $S$ is said to be normalized if $1 \in S$ and for every $s \in S$, $s \neq 0$, we have $|s| \geqslant 1$. If $S$ is a normalized set and $T = cS$ for some non-zero constant $c$ we call $S$ a normalization of $T$.
\end{definition}

Any non-empty finite set of real numbers other than $\{ 0 \}$ can be normalized by dividing by one of $\pm \min\{|s| \mid s \in S, s \neq 0 \}$, however such a normalization is only unique up to a possible $\pm$. For example $\{-2, 2, 4 \}$ could be normalized as $\{-1, 1, 2 \}$ or as $\{1, -1, -2 \}$. Whenever $S$ is symmetric it follows that $S$ has a unique normalization. 

\begin{definition}
Let $S$ be a non-empty finite set of real numbers. Let $m = \min \{|s| \mid s \in S, s \neq 0 \}$. Given $s, t \in S$ we say that the relative gap between $s$ and $t$ is $\frac{|s-t|}{m}$. In particular if $S$ is normalized then the relative gap is just the difference between $s$ and $t$. If $S = \{s_1, s_2, ..., s_n \}$ with $s_j < s_{j+1}$ then we say that the total gap of $S$ is the $\max_j \frac{|s_j - s_{j+1}|}{m}$.
\end{definition}

\section{Recursive Criterions}

Here we prove various recursive criterion to certify if an element is in $\Sigma_S$ or not and then use them to verify that when $S$ does not have large gaps then $\Sigma_S$ contains both an annulus and two intervals of the real line. The criterion explicitly construct power series with desired roots (or prove no such power series exists). The presentation here is purely analytic but the same recursion as in $\textbf{Theorem \ref{1-step}}$ can be found in \textbf{[So]} and \textbf{[Ba]} with a more dynamical interpretation. On the other hand, the recursion in \textbf{Theorem \ref{2-step}} is new.

\begin{theorem}(1-step recursion)
\label{1-step}
Let $S$ be a non-empty bounded (possibly infinite) set of complex numbers. Given $r \in \mathbb{C}$ with $|r| > 1$, the point $1/r$ belongs to $\Sigma_S$ if and only if there exists a sequence $(p_j)_{j=0}^\infty$ in $S$ with $p_0 \neq 0$ such that the sequence $(q_j)_{j=0}^\infty$ defined by:
\begin{align*}
q_0 = p_0, & & q_{j+1} = p_{j+1} + rq_j
 \end{align*}
 is uniformly bounded.
\end{theorem}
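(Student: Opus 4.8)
The plan is to show that the auxiliary sequence $(q_j)$ is, up to the factor $r^j$, exactly the sequence of partial sums of $P(z) = \sum_{j\geqslant 0} p_j z^j$ evaluated at the candidate root $z_0 := 1/r$, and then to read off both implications from a single geometric-series estimate.

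First I would unwind the recursion. An easy induction on $q_0 = p_0$, $q_{j+1} = p_{j+1} + r q_j$ gives the closed form $q_j = \sum_{i=0}^j p_i r^{j-i} = r^j \sum_{i=0}^j p_i z_0^{i}$, that is, $q_j = r^j S_j$ where $S_j := \sum_{i=0}^{j} p_i z_0^{i}$ is the $j$th partial sum of $P$ at $z_0$ (equivalently $S_j = z_0^{j} q_j$). Since $S$ is bounded, say $|s| \leqslant B$ for all $s\in S$, and $|z_0| = |r|^{-1} < 1$, the series defining $P(z_0)$ converges absolutely; this is the one place boundedness of $S$ is needed, and it also records that $z_0 \in \mathbb{D}$.

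For the ``if'' direction I would assume $|q_j| \leqslant C$ for all $j$; then $|S_j| = |q_j|\,|z_0|^{j} \leqslant C|z_0|^{j} \to 0$, so $P(z_0) = \lim_j S_j = 0$. Combined with the hypothesis $p_0 \neq 0$ and $z_0 \in \mathbb{D}$, this is precisely the statement that $1/r \in \Sigma_S$. For the ``only if'' direction I would start from a power series $P$ with coefficients in $S$, $p_0 \neq 0$, and $P(z_0) = 0$ witnessing $1/r \in \Sigma_S$; since the full sum vanishes, $S_j$ is the negative of the tail $\sum_{i > j} p_i z_0^{i}$, hence $q_j = z_0^{-j} S_j = -\sum_{m\geqslant 1} p_{j+m} z_0^{m}$, which is bounded in absolute value by $B\,|z_0|/(1-|z_0|)$ uniformly in $j$. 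So the same coefficient sequence realizes a uniformly bounded $(q_j)$, closing the equivalence.

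I do not expect a genuine obstacle here: once the identity $q_j = r^j S_j$ is in hand, each direction is a two-line estimate. The only points worth a word of care are the absolute convergence of $P(z_0)$ — needed so that ``$S_j \to 0$'' genuinely means ``$P(z_0) = 0$'', and supplied by boundedness of $S$ — and the bookkeeping that the definition of $\Sigma_S$ already incorporates $p_0 \neq 0$, so no separate argument is required to secure that condition on either side.
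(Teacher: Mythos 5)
Your proof is correct, and it takes a genuinely different route from the paper. The paper views $Q(z) = P(z)/(1-rz)$ as a power series with coefficients $q_j$ and argues via holomorphy and radius of convergence: bounded $q_j$ makes $Q$ holomorphic on all of $\mathbb{D}$, forcing $P(1/r)=0$; unbounded $q_j$ is amplified into \emph{exponential} growth (an explicit lemma that $|q_{j+k+1}| \geqslant |q_j| + |r|^k\epsilon\eta$ once $|q_j|$ exceeds the escape threshold $M/(|r|-1)$), which shrinks the radius of convergence of $Q$ below $1$ and forces $P(1/r)\neq 0$. You instead unwind the recursion to the closed form $q_j = r^j S_j$, where $S_j$ is the $j$th partial sum of $P$ at $1/r$, and then both directions collapse to elementary series estimates: boundedness of $q_j$ gives $S_j \to 0$ so $P(1/r)=0$, and $P(1/r)=0$ identifies $q_j$ with the (uniformly bounded) negated tail $-\sum_{m\geqslant 1} p_{j+m}(1/r)^m$. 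Your argument is shorter and more elementary, and it handles the existential quantifier over $(p_j)$ cleanly in both directions without any contrapositive. What it does not produce, and what the paper's growth lemma buys, is the quantitative escape threshold $E_1 = M/(|r|-1)$ that the rest of the paper leans on heavily (Bandt's algorithm, the corollary right after the theorem, and the 2-step analogue). In fact your tail estimate $|q_j| \leqslant M|r|^{-1}/(1-|r|^{-1}) = M/(|r|-1)$ recovers exactly that threshold as a byproduct of the ``only if'' direction, so your approach not only proves the theorem but also yields the escape-threshold corollary directly, arguably more transparently than the paper's proof does.
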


\begin{proof}
First suppose we have a sequence $(p_j)_{j=0}^\infty$ such that the sequence $(q_j)_{j=0}^\infty$ is uniformly bounded. Consider the induced power series $P(z) = \sum_{j=0}^\infty p_jz^j$ and the quotient $Q(z) = \frac{P(z)}{1-rz}$. Note that since the $p_j$ are uniformly bounded, the radius of convergence of $P(z)$ is at least 1 and hence holomorphic on the unit disc. Thus $P(1/r) = 0$ if and only if $Q(z)$ is holomorphic on the unit disc. Even if $P(1/r) \neq 0$ the only pole $Q(z)$ could have in $\mathbb{D}$ will be at $1/r$ and so $Q(z)$ is at least holomorphic in a neighbourhood of $0$. Thus it has a power series expansion $Q(z) = \sum_{j=0}^\infty q_jz^j$ for $|z| < 1/r$. Then we have $P(z) = (1-rz)Q(z)$ and so the coefficients of $\sum_{j=0}^\infty q_jz^j$ satisfy the recursion $p_0 = q_0$ and $p_{j+1} = q_{j+1} - rq_{j}$. Since the $q_j$ are uniformly bounded it follows the radius of convergence of $\sum_{j=0}^\infty q_jz^j$ is at least $1$ and so $Q(z)$ is holomorphic on the entire unit disc implying $P(1/r) = 0$.

\vspace{10pt}

Suppose now that the sequence $(q_j)_{j=0}^\infty$ is unbounded. The construction before still holds and so we have $Q(z) = \frac{P(z)}{1-rz}$ has power series expansion $\sum_{j=0}^\infty q_jz^j$ which satisfies the previously mentioned recursion. Let $M = \sup_{s \in S} |s|$ and let $|r| = 1+\epsilon$. Since $|r| > 1$ we have $\epsilon > 0$. As $q_j$ is unbounded we can pick some index $j$ such that $|q_j| \geqslant \frac{M}{\epsilon} + \eta$ for some $\eta > 0$. 

We now claim for every $k \geqslant 0$ that:
\begin{align*}
|q_{j+k+1}| &\geqslant |g_j| + |r|^k\epsilon\eta.
\end{align*}

For the base case suppose $k=0$. Since $q_{j+1} = p_{j+1} + rq_j$ we have that:
\begin{align*}
|q_{j+1}| &\geqslant |r| \left( \frac{M}{\epsilon} + \eta \right) - |p_{j+1}| \\
&= (1+\epsilon) \left(\frac{M}{\epsilon} + \eta \right) - |p_{j+1}| \\
&= \frac{M}{\epsilon} + \eta + M + \epsilon\eta - |p_{j+1}| \\
&\geqslant |q_j| + M + \epsilon\eta - M \\
&= |q_j| + \epsilon \eta.
\end{align*}

Now suppose that for some $k$ we have $|q_{j+k+1}| \geqslant |g_j| + |r|^k\epsilon\eta$. Then we have that:
\begin{align*}
|q_{j+k+2}| &\geqslant |r||q_{j+k+1}| - |p_{j+k+1}| \\
&\geqslant |r|(|q_j| +|r|^{k}\epsilon\eta) - M \\
&\geqslant |q_j| + |r|^{k+1}\epsilon\eta.
\end{align*}

Thus the $q_j$ grow (at least) exponentially. This implies that the radius of convergence of $\sum_{j=0}^\infty q_jz^j$ is strictly less than $1$ and so $Q(z)$ is not holomorphic on the whole unit disc which implies $P(1/r) \neq 0$ proving the claim.

\end{proof}

An important consequence of the above theorem is that it gives an explicit bound (dependent on the root and on $S$) on how large the $q_j$ can get before we know they tend to $\infty$. In particular, if the size of $r$ is close to $M$ then $\frac{M}{|r|-1}$ will be close to $1$ and so showing that the $q_j$ become slightly bigger than $1$ is enough to prove that $1/r$ is not a root of a power series.

\begin{definition}
Let $S$ be a non-empty bounded normalized set such that $\min_{s \in S, s \neq 0} |s| = 1$ and let $M = \sup_{s \in S}|s|$. For $\lambda = \frac{1}{r} \in \mathbb{D}$ we define the (1-step) escape threshold of $\lambda$ with respect to $S$ as:
\begin{equation*}
E_1 = E_1(\lambda, S) = \frac{M}{|r| - 1}
\end{equation*}
\end{definition}

\begin{corollary}
Let $S$ be a non-empty bounded normalized set and let $M = \sup_{s \in S} |s|$. Let $(p_j)_{j=0}^\infty$ be some sequence in $S$ and let $\lambda \in \mathbb{D}$. Let $q_j$ be as in the 1-step recursion. If there is some index $j$ such that $|q_j| > E_1(S, \lambda)$ then $\lambda$ is not a root of $\sum_{j=0}^\infty p_jz^j$.
\end{corollary}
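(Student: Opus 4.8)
The plan is to notice that this corollary is, in essence, already proved inside Theorem \ref{1-step}: the second half of that proof does not merely show that an unbounded $(q_j)$ obstructs $1/r$ from being a root, it shows that as soon as $|q_j|$ exceeds $\tfrac{M}{|r|-1}$ the tail $(q_{j+k})_k$ is forced to grow geometrically. So the corollary follows by isolating that estimate and applying it to the single given sequence $(p_j)$, rather than using it to prove an equivalence.

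Concretely, I would first dispose of $\lambda = 0$: then $E_1 = 0$ and $|q_0| = |p_0| > 0$, so $p_0 \neq 0$ and $0$ is not a root of $\sum_j p_j z^j$. For $\lambda \neq 0$, write $\lambda = 1/r$ with $|r| = 1 + \epsilon$, $\epsilon > 0$, and use the strictness of the hypothesis $|q_j| > E_1 = \tfrac{M}{\epsilon}$ to write $|q_j| = \tfrac{M}{\epsilon} + \eta$ with $\eta > 0$ — the strict inequality is exactly what makes $\eta$ positive, which is what the geometric growth needs. Then I would reproduce verbatim the induction from the proof of Theorem \ref{1-step}, using $q_{m+1} = p_{m+1} + r q_m$ and $|p_m| \leq M$, to obtain $|q_{j+k+1}| \geq |q_j| + |r|^k \epsilon \eta$ for all $k \geq 0$; hence $|q_m| \to \infty$ geometrically and the power series $\sum_m q_m z^m$ has radius of convergence strictly less than $1$.

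To finish, I would invoke the setup of Theorem \ref{1-step}: since $S$ is bounded, $P(z) = \sum_m p_m z^m$ is holomorphic on $\mathbb{D}$, so $Q(z) = P(z)/(1-rz)$ is holomorphic on $\mathbb{D} \setminus \{\lambda\}$ with at worst a simple pole at $\lambda$, and near $0$ its Taylor coefficients satisfy $q_0 = p_0$ and $q_{m+1} = p_{m+1} + r q_m$, i.e.\ they are exactly the $q_m$ of the $1$-step recursion. Since that Taylor series has radius of convergence below $1$, $Q$ genuinely fails to be holomorphic somewhere in $\mathbb{D}$, and the only possible location is $\lambda$; therefore $P(\lambda) \neq 0$, so $\lambda$ is not a root of $\sum_m p_m z^m$. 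There is no substantive obstacle here beyond bookkeeping; the two points that need care are that the strict hypothesis is precisely what yields $\eta > 0$, and that the recursively defined $q_m$ really are the Taylor coefficients of $Q$ at the origin (by uniqueness of the expansion coming from $P(z) = (1-rz)Q(z)$), so that their geometric growth does pin down the radius of convergence.
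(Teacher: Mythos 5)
Your proof is correct and follows exactly the route the paper intends: the corollary is a direct extraction of the geometric-growth estimate embedded in the second half of the proof of Theorem \ref{1-step}, and you correctly observe that the strict inequality $|q_j| > E_1$ supplies the $\eta > 0$ that drives the blow-up, after which the radius-of-convergence argument for $Q(z) = P(z)/(1-rz)$ closes it out as in the theorem. (The $\lambda = 0$ preamble is harmless but superfluous, since $E_1$ and the recursion are only defined for $\lambda = 1/r$ with $|r| > 1$.)
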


Also note that we need $|q_j|$ to be strictly larger than the escape threshold to conclude $1/r$ is not a root of $P(z)$. If $|q_j|$ is exactly equal to the escape threshold then tracing through the proof shows that in fact $|q_{j+1}|$ is also equal to the escape threshold and that $|p_{j+1}| = M$ and hence that for all indexes $k$ after $j$, we will have $|q_{k}|$ equal to the escape threshold and $|p_k|$ equal to $M$. This observation provides a useful tool to see how many different power series have a specified root. If you can prove that for some root and some choices of coefficients $p_0, p_1, ..., p_j$ that $|q_j|$ is equal to the escape threshold it follows every coefficient of the power series after index $j$ is forced. In fact, $|q_j|$ being merely `close' to the escape threshold can sometimes be enough to force the values of $p_{j+k}$.

Lastly, the escape threshold above is best possible since we can find power series with roots such that $|q_j|$ achieves the escape threshold. For example, if $P(z) = 1-z-z^2-z^3-....$ then $M = 1$ and $P(z)$ has a root at $1/2$. Thus the escape threshold is $\frac{1}{2-1} = 1$. The recursion says $q_0 = p_0 = 1$ and so at the first step we have achieved the escape threshold.

This recursion also offers an easy algorithm to generate images of the set $\Sigma_S$ called Bandt's algorithm. Indeed, for any point $\lambda$ we can decide if $\lambda$ is in $\Sigma_S$ or not by iterating the recursion over every possible choice of $p_j \in S$ to get a $q_j$. If the $q_j$ exceeds the escape threshold then $\lambda$ is not a root of that particular choice and we stop considering it, if every possible choice of $p_j$ exceeds the escape threshold then $\lambda$ is not in $\Sigma_S$. If after some predetermined number of iterations we find all the $q_j's$ are smaller than the escape threshold we assume $\lambda$ is in $\Sigma_S$. Note also that since $\Sigma_S$ is symmetric across both the real and imaginary axis whenever $S$ is a symmetric set of real numbers we can just generate the image of $\Sigma_S$ in the first quadrant.

\begin{figure}[!h]
\centering
\includegraphics{./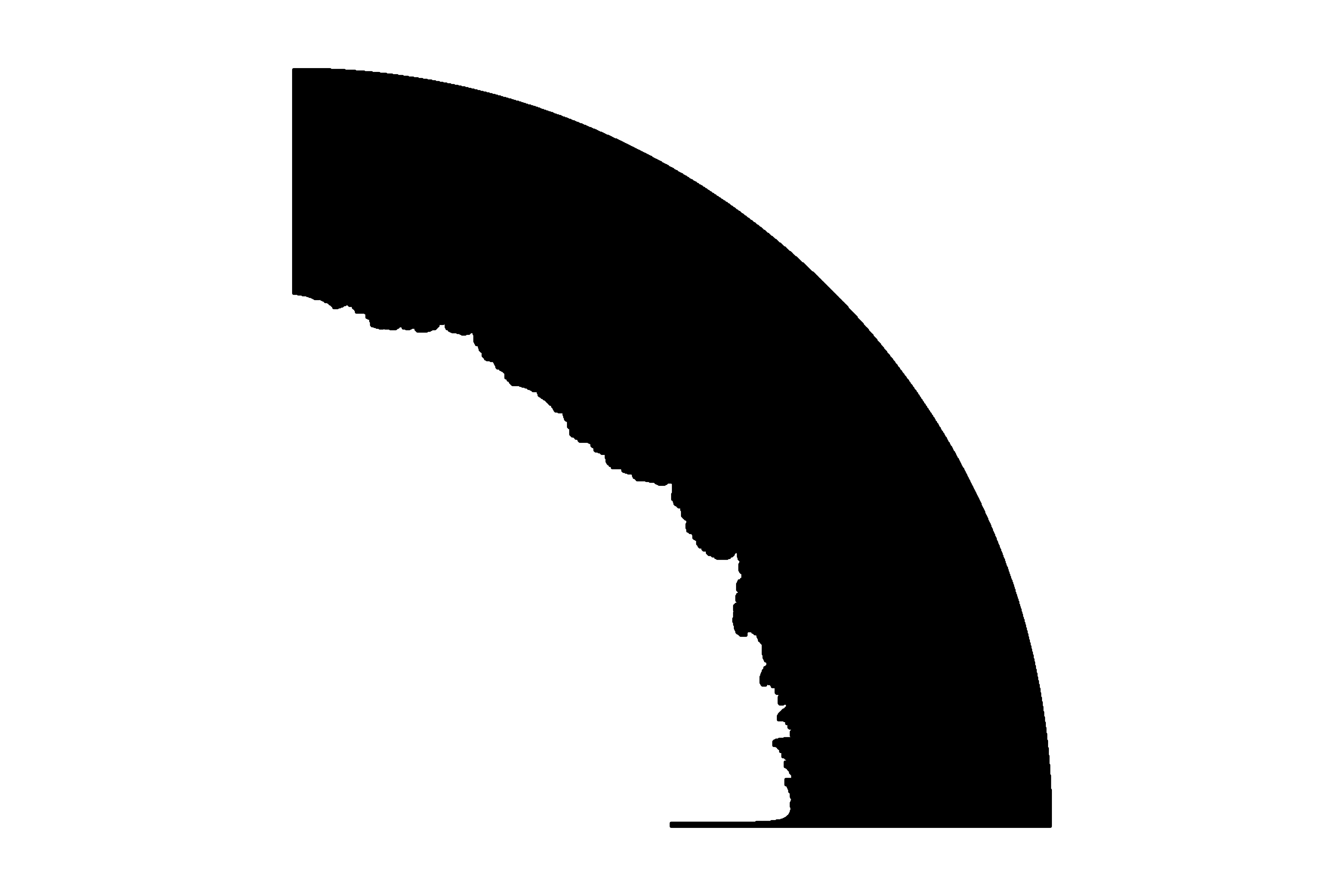}
\caption{$\Sigma_{\{0, \pm 1 \} }$ in the first quadrant}
\end{figure}

\begin{theorem}
\label{real-strip}
Suppose $S$ is a non-empty normalized finite set of real numbers. Denote $M = \max_{s \in S} |s|$ and assume $M, -M \in S$. If the total gap of $S$ is $\leqslant 2$ then $\Sigma_S$ contains the real interval $[\frac{1}{M+1}, 1)$.
\end{theorem}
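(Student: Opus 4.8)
The plan is to use the 1-step recursion (Theorem \ref{1-step}) to construct, for each real $x \in [\tfrac{1}{M+1}, 1)$, a power series with coefficients in $S$ having $x$ as a root. Write $x = 1/r$ with $r > 1$; since $x \geqslant \tfrac{1}{M+1}$ we have $1 < r \leqslant M+1$. By Theorem \ref{1-step}, it suffices to choose a sequence $(p_j)$ in $S$ with $p_0 \neq 0$ so that the iterates $q_0 = p_0$, $q_{j+1} = p_{j+1} + r q_j$ remain uniformly bounded. The natural strategy is greedy: having produced $q_j$, pick $p_{j+1} \in S$ to make $|q_{j+1}| = |p_{j+1} + r q_j|$ as small as possible. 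Since all quantities here are real (as $S \subseteq \mathbb{R}$ and $r$ real), this amounts to choosing $p_{j+1}$ to be the element of $S$ closest to $-r q_j$, and the key point is that the total gap hypothesis controls how close we can get.

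The heart of the argument is the following invariant: I claim that with the greedy choice one can maintain $|q_j| \leqslant M$ for all $j$, or more precisely that $|q_j|$ stays in a bounded interval. Suppose $|q_j| \leqslant M$. Then $|r q_j| \leqslant r M \leqslant (M+1)M$, and $-r q_j$ lies in the interval $[-rM, rM]$. Because $-M, M \in S$ and the total gap of $S$ is at most $2$ (so consecutive elements of $S$ differ by at most $2$, and $S$ covers $[-M,M]$ up to gaps of size $2$), for any target $t$ with $|t| \leqslant M$ there is some $s \in S$ with $|s - t| \leqslant 1$; and for a target with $M < |t| \leqslant rM$ one takes $s = \pm M$ (matching the sign of $t$), giving $|s - t| = |t| - M \leqslant (r-1)M$. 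In the first case $|q_{j+1}| \leqslant 1 \leqslant M$; in the second case $|q_{j+1}| = |t| - M \leqslant (r-1)M$, and since $r \leqslant M+1$ a short computation must show this is again $\leqslant M$ — but this is exactly where one has to be careful, since $(r-1)M \leqslant M$ forces $r \leqslant 2$, which need not hold. So the naive invariant $|q_j| \leqslant M$ is too strong when $r$ is large (i.e. $x$ near $\tfrac{1}{M+1}$), and one must instead track $|q_j|$ against the escape threshold $E_1 = \tfrac{M}{r-1}$: the real content is that the greedy map $q \mapsto \operatorname{dist}(S, -rq)$-type iteration has the interval $[-E_1, E_1]$ (or a slightly smaller comfortable interval) as a forward-invariant trapping region. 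I would set up the induction to prove $|q_j| \leqslant E_1$ for all $j$: if $|q_j| \leqslant E_1$ then $|rq_j| \leqslant rE_1 = \tfrac{rM}{r-1}$; one then checks that the closest point of $S$ to $-rq_j$ brings us back inside $[-E_1, E_1]$, using $-M,M\in S$ to handle targets of magnitude exceeding $M$ and using the total-gap-$\leqslant 2$ condition (closest element within distance $1$) to handle targets of magnitude at most $M$. The endpoint $x = \tfrac{1}{M+1}$ itself (where $r = M+1$, $E_1 = 1$) is the tight case and corresponds to the power series $1 - Mz - Mz^2 - \cdots$ from Lemma \ref{outerannulus}(2), which gives a concrete check that the trapping region is non-empty there.

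The main obstacle, then, is verifying the trapping inequality uniformly in $r \in (1, M+1]$: showing that $\operatorname{dist}(-rq, S) \leqslant E_1$ whenever $|q| \leqslant E_1$. This splits into the two target-magnitude regimes above, and the delicate regime is when $|rq|$ is only slightly larger than $M$ but $E_1$ is only slightly larger than $1$ (i.e. $r$ close to $M+1$); there one needs that the jump down from $\pm M$ plus the residual is still absorbed, which should follow from $rE_1 - M = \tfrac{rM}{r-1} - M = \tfrac{M}{r-1} = E_1$, so in fact the target $-rq$ with $|q|=E_1$ lands at distance exactly $E_1$ from $\pm M$ — the boundary is preserved exactly, consistent with the sharpness remark after the escape-threshold corollary. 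Once the trapping region is established, uniform boundedness of $(q_j)$ is immediate, Theorem \ref{1-step} furnishes the power series, and since this works for every $x \in [\tfrac{1}{M+1}, 1)$ we conclude $[\tfrac{1}{M+1}, 1) \subseteq \Sigma_S$. A final remark: one should also confirm $p_0$ can be taken nonzero, which is automatic since $1 \in S$, so we may start with $q_0 = p_0 = 1$.
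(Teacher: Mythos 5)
Your proposal is correct and follows essentially the same strategy as the paper: use the 1-step recursion greedily, starting from $p_0 = 1$, choosing $p_{j+1}$ at each step to pull $q_{j+1} = p_{j+1} + r q_j$ back toward zero, and split into the two regimes $|r q_j| \leqslant M$ (use the total-gap hypothesis to land within distance $1$) and $|r q_j| > M$ (use $\pm M \in S$). The one place you diverge is the choice of trapping interval: you track $|q_j| \leqslant E_1 = M/(r-1)$, whereas the paper tracks the tighter invariant $|q_j| \leqslant 1$. The paper's choice is cleaner, because with $|q_j| \leqslant 1$ and $r \leqslant M+1$ one always has $|r q_j| \leqslant M+1$, so in the regime $|r q_j| > M$ the residual $|r q_j| - M$ is automatically at most $1$, with no separate computation and no need to pass through the intermediate attempt $|q_j| \leqslant M$ (which, as you correctly note, fails for $r > 2$). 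Your $E_1$-based trapping also works — your boundary check $r E_1 - M = E_1$ is the right identity, and $E_1 \geqslant 1$ whenever $r \leqslant M+1$, so the regime $|r q_j| \leqslant M$ also stays inside — but it carries a looser bound than necessary, since the greedy iterate starting from $q_0=1$ in fact never leaves $[-1,1]$. So: same method, different (looser) invariant; tightening to $|q_j| \leqslant 1$ removes the case analysis you found delicate.
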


\begin{proof}
Fix $1/r$ such that $1 < r \leqslant M+1$. Let $p_0 = 1 \in S$. Then $q_0 = 1$. Suppose we have defined the first $j$ terms in our sequence $p_0, p_1, ..., p_j$ such that the corresponding $q_0, q_1, ..., q_j$ are all $\leqslant 1$ in absolute value. Then we have that $0 \leqslant |rq_j| \leqslant M+1$. If $|rq_j| \geqslant M$ then we can pick $p_{j+1}$ to be $\pm M$ and have that $|q_{j+1}| \leqslant 1$. Otherwise we have that $0 \leqslant |rq_j| \leqslant M$. Thus since the total gap of $S$ is $\leqslant 2$, $S$ must contain an element in $[-rq_j - 1, -rq_j+1]$ and so we can pick $p_{j+1}$ to be such an element and we have that $q_{j+1} = p_{j+1} + rq_j$ is in $[-1, 1]$.

Thus we have a sequence $(p_j)_{j=0}^\infty$ in $S$ such that the sequence $(q_j)_{j=0}^\infty$ is uniformly bounded by $1$ and so $1/r \in \Sigma_S$ as wanted.
\end{proof}

\begin{corollary}
Suppose that $S$ is a non-empty symmetric normalized finite set of integers and that for every integer $k$ satisfying $|k| < \max(S)$  at least one of $k$ and $k+1$ is in $S$. Then $\Sigma_S$ contains the real interval $[\frac{1}{M+1}, 1)$.
\end{corollary}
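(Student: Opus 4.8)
The plan is to derive the corollary directly from Theorem \ref{real-strip}, so it suffices to verify that the hypotheses of that theorem are met by $S$. Being a finite symmetric set of integers containing $1$, $S$ is normalized with $m := \min_{s \in S,\, s \neq 0}|s| = 1$; writing $M = \max(S)$, symmetry gives $M = \max_{s \in S}|s|$, and also $-M \in S$, so the standing assumption $M, -M \in S$ of Theorem \ref{real-strip} holds. Hence the whole content of the corollary reduces to showing that the total gap of $S$ is at most $2$.

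For that, recall that since $S$ is normalized its total gap equals the largest difference $t - s$ over consecutive elements $s < t$ of $S$. I would argue by contradiction: suppose $s < t$ are consecutive in $S$ with $t - s \geq 3$. Since $t \leq M$ and $s \geq -M$, we get $-M \leq s \leq M - 3$, so the integer $k := s + 1$ satisfies $-M < k < M$, i.e. $|k| < M$. The hypothesis on $S$ then forces $k = s+1 \in S$ or $k+1 = s+2 \in S$; but both $s+1$ and $s+2$ lie strictly between $s$ and $t$ (as $s < s+1 < s+2 < s+3 \leq t$), contradicting that $s$ and $t$ are consecutive. Thus no such gap exists, the total gap of $S$ is $\leq 2$, and Theorem \ref{real-strip} applies to give $[\frac{1}{M+1}, 1) \subseteq \Sigma_S$.

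I do not expect a genuine obstacle; the only point needing a little care is the boundary bookkeeping, namely using $t \leq M$ to conclude $s \leq M - 3$ (rather than the weaker $s \leq M - 1$), which is exactly what guarantees that the integer $k = s+1$ to which we apply the hypothesis really does satisfy $|k| < M$. When $M \leq 2$ there is simply no pair with $t - s \geq 3$ inside $[-M, M]$, so the statement is vacuously fine there.
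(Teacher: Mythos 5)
Your proposal is correct and matches the paper's (implicit) approach: the corollary is stated immediately after Theorem \ref{real-strip} with no separate proof precisely because, as you show, the hypothesis ``for every integer $k$ with $|k| < \max(S)$ at least one of $k, k+1$ is in $S$'' forces the total gap to be at most $2$, and symmetry supplies $M, -M \in S$. Your bookkeeping on the boundary case ($t \leq M$ giving $s \leq M-3$ so that $|s+1| < M$) is exactly the right point to check, and it goes through.
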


\begin{remark}
A consequence of the above theorem is that we can get the best possible real interval if $S$ contains only every other integer between $1$ and our maximum. This implies that it is possible for $\Sigma_S \cap \mathbb{R} = \Sigma_T \cap \mathbb{R}$ even when $S$ and $T$ are wildly different. For example, let $S = \{ \pm 1, \pm 3 \}$ and let $T = \{ \pm 1, \pm 2, \pm 3 \}$. Then by the above we know that $\Sigma_S \cap \mathbb{R} = \Sigma_T \cap \mathbb{R}$. On the other hand, by \textbf{Theorem \ref{weakrigidity}} below, since $2 < 2\sqrt{3}+1$ it follows that $\Sigma_S \neq \Sigma_T$.
\end{remark}

\begin{figure}[!h]
\centering
\includegraphics{./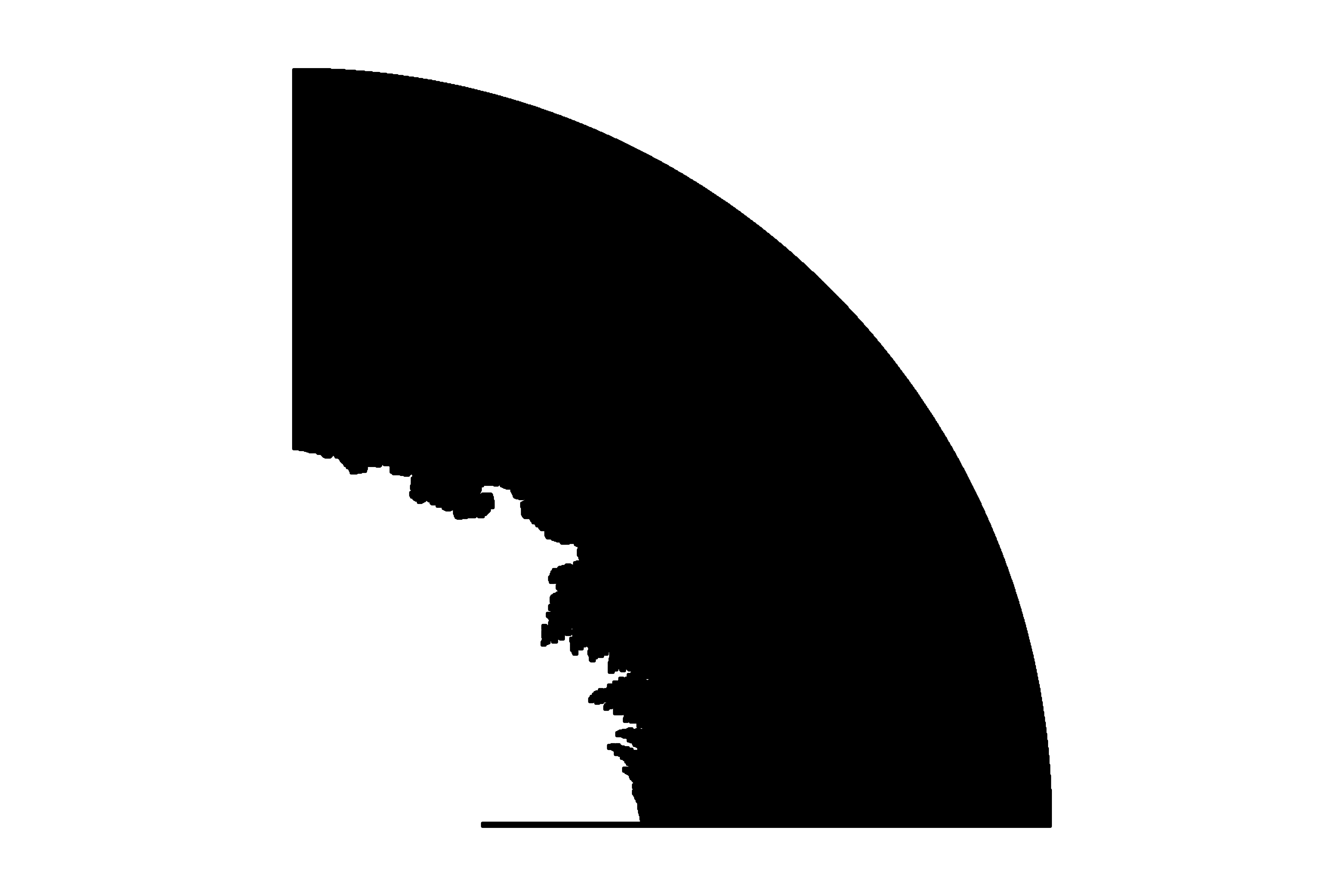}
\caption{$\Sigma_{\{0, \pm 1 , \pm 3\} }$ in the first quadrant. Even though the coefficients skip 2 we have $\Sigma_{\{0, \pm 1 , \pm 3\} } \cap [0, \infty)$ is connected.}
\end{figure}

\subsection{Complex Roots}

The previous argument worked in part because the root we looked at and our set $S$ were both real. If we took the potential root to be complex (while keeping $S$ real) then the recursion is difficult to control since each $p_j$ is real while the $q_j$ quickly become imaginary, giving a system of recursive equations for the real and imaginary parts. As $p_j$ will be real we can easily control the size of the real part of the recursion however it is difficult to stop the imaginary part from growing. Because of this the recursion is impractical to solve. Instead we leverage the fact that the complex roots of polynomials and power series with real coefficients must come in pairs.

\begin{theorem}(2-Step Recursion)
\label{2-step}
Suppose $S$ is a non-empty bounded (possibly infinite) set of real numbers and that $\lambda = \frac{1}{r}e^{i\theta} \in \mathbb{D} \setminus \mathbb{R}$. Then $\lambda \in \Sigma_S$ if and only if there exists a sequence $(p_j)_{j=0}^\infty$ in $S$ with $p_0 \neq 0$ such that the sequence $(q_j)_{j=0}^\infty$ defined by:
\begin{equation} \label{rec2}
	q_0 = p_0, \qquad q_1 = p_1 + 2r\cos(\theta) q_0, \qquad q_{j+2} = p_{j+2} + 2r\cos(\theta)q_{j+1} - r^2q_j
\end{equation}
is uniformly bounded.
\end{theorem}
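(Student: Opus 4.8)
The plan is to reduce the two‑step recursion \eqref{rec2} to two successive applications of the one‑step recursion of Theorem \ref{1-step}, exploiting two facts: first, a power series with real coefficients vanishes at $\lambda$ if and only if it vanishes at $\overline\lambda$; and second, the algebraic identity $(1-z/\lambda)(1-z/\overline\lambda) = 1 - 2r\cos\theta\,z + r^2z^2$, whose two roots are exactly $\lambda = \tfrac1r e^{i\theta}$ and $\overline\lambda = \tfrac1r e^{-i\theta}$ (equivalently, $1/(re^{-i\theta}) = \lambda$ and $1/(re^{i\theta}) = \overline\lambda$, both with modulus $r>1$).

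For the ``if'' direction, suppose $(p_j)$ is a sequence in $S$ with $p_0\neq 0$ for which the sequence $(q_j)$ of \eqref{rec2} is uniformly bounded. Then $Q(z):=\sum q_j z^j$ is holomorphic on $\mathbb D$, and \eqref{rec2} says precisely that $(1 - 2r\cos\theta\,z + r^2z^2)\sum q_j z^j = \sum p_j z^j$ as formal power series. Since $S$ is bounded, $P(z):=\sum p_j z^j$ also has radius of convergence at least $1$, so this identity holds between holomorphic functions on $\mathbb D$: $P(z) = (1 - 2r\cos\theta\,z + r^2z^2)Q(z)$. Evaluating at $\lambda\in\mathbb D$ the quadratic factor vanishes, so $P(\lambda)=0$, and since $p_0\neq 0$ this gives $\lambda\in\Sigma_S$.

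For the ``only if'' direction, suppose $\lambda = \tfrac1r e^{i\theta}\in\Sigma_S$. Applying Theorem \ref{1-step} with the complex ratio $re^{-i\theta}$ (legitimate, as $|re^{-i\theta}| = r>1$ and $1/(re^{-i\theta})=\lambda$) produces a sequence $(p_j)$ in $S$ with $p_0\neq 0$ for which $r_0 = p_0$, $r_{j+1} = p_{j+1} + re^{-i\theta}r_j$ is uniformly bounded. Setting $P(z)=\sum p_j z^j$ and $R(z)=\sum r_j z^j$, this recursion gives $P(z) = (1-re^{-i\theta}z)R(z)$, hence $P(\lambda)=0$, hence $P(\overline\lambda)=\overline{P(\lambda)}=0$ because the $p_j$ are real, and therefore $R(\overline\lambda) = P(\overline\lambda)/(1-e^{-2i\theta}) = 0$; the denominator $1-e^{-2i\theta}$ is nonzero since $\lambda\notin\mathbb R$ forces $\theta\not\equiv 0\pmod\pi$. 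Now run the one‑step argument a second time on $R$: since $R(\overline\lambda)=0$ and $1/(re^{i\theta})=\overline\lambda$ with $|re^{i\theta}|=r>1$, the sequence $q_0 = r_0$, $q_{j+1}=r_{j+1}+re^{i\theta}q_j$ is uniformly bounded. Finally, substituting $r_{j+1}=q_{j+1}-re^{i\theta}q_j$ into $r_{j+2}=p_{j+2}+re^{-i\theta}r_{j+1}$ and checking the first two terms ($q_0=p_0$ and $q_1 = p_1 + (re^{i\theta}+re^{-i\theta})q_0 = p_1 + 2r\cos\theta\,q_0$) shows that this $(q_j)$ is exactly the sequence of \eqref{rec2} attached to $(p_j)$; in particular $(q_j)$ is bounded, as required.

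The point that needs care is this second application of the one‑step recursion: Theorem \ref{1-step} is stated for power series whose coefficients lie in a prescribed bounded set, while here it is applied to $R$, whose coefficients $r_j$ are complex numbers not belonging to any natural coefficient set. The resolution is that the proof of Theorem \ref{1-step} never uses membership in $S$ — it uses only the bound $\sup_j|r_j|<\infty$, from which the exponential‑growth dichotomy for the associated $q_j$ follows verbatim. (One can sidestep this altogether by arguing directly on the second‑order recursion \eqref{rec2}: put $u_j = q_{j+1}-re^{i\theta}q_j$, note that $u_{j+1} = p_{j+2} + re^{-i\theta}u_j$ is itself a one‑step recursion, and treat separately the cases where $(u_j)$ is bounded and where it is not, using in each case that exponential growth of the Taylor coefficients is incompatible with $Q(z)=P(z)/\bigl((1-z/\lambda)(1-z/\overline\lambda)\bigr)$ being holomorphic on $\mathbb D$. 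This is the only genuinely computational part of the argument.)
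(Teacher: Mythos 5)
Your proof is correct and follows essentially the same approach as the paper's: divide $P$ successively by $1-z/\lambda$ and $1-z/\overline\lambda$, apply the one‑step argument to each division, and use the fact that real coefficients force $\overline\lambda$ to be a root whenever $\lambda$ is. You are in fact slightly more careful than the paper on one point — the second application of Theorem \ref{1-step} is to a power series whose coefficients are complex and do not lie in $S$; the paper asserts boundedness of those intermediate coefficients without comment, whereas you correctly observe that the exponential‑growth dichotomy in the one‑step proof uses only $\sup_j|r_j|<\infty$, not membership in any prescribed coefficient set.
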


\begin{proof}
First note that since $S$ contains only real numbers we know that $\lambda$ and $\overline{\lambda}$ either are both roots of the same power series/polynomial with coefficients in $S$ or they are both not roots. Suppose first that $\lambda$ is the root of some polynomial/power series $P(z)$. Then it follows that the power series $Q_1(z) = \frac{P(z)}{1-z/\lambda}$ has uniformly bounded coefficients and that $\overline{\lambda}$ is a root of $Q_1(z)$. Thus we have that the power series $Q(z) = \frac{Q_1(z)}{1-z/\overline{\lambda}} = \frac{P(z)}{1-2r\cos(\theta)z + r^2z^2}$ has uniformly bounded coefficients also. The coefficients of $Q(z)$ are as in \eqref{rec2} proving the implication.

\vspace{10pt}

For the other direction we argue by converse. If $\lambda$ and $\overline{\lambda}$ are not roots of a power series $P$ then the power series $Q(z) = \frac{P(z)}{1-2r\cos(\theta)z + r^2z^2}$ has a pole at $\lambda$. This means the radius of convergence is $< 1$ and so the coefficients must be unbounded.

\end{proof}

Notice that the above recursion relies not just on the size of the root $|r|$ but also on $|r|^2$ and on the argument of the root. This is highly suggestive that the complex roots are necessarily smaller and that some angles may have roots deeper into the disc than others.

To quantify this, recall that for the 1-step recursion, if $|q_j|$ ever exceeds the threshold $\frac{M}{|r|-1}$ then we can say that $q_j$ is unbounded. Since we apply this theorem twice in our 2-step recursion it follows that if for $1/r \in \Sigma_S$ that if $M$ is the supremum value of the coefficients of $P(z)$ then $\frac{M}{|r|-1}$ is the maximum possible value for the coefficients of $\frac{P(z)}{1-rz}$ and hence that $\frac{M}{(|r|-1)^2}$ is the maximum possible value for $\frac{P(z)}{(1-rz)(1-\overline{r}z)}$.

We know that the smallest coefficient we can initially choose is $1$ and so if this new escape threshold is ever $< 1$ then $1/r$ can not be in $\Sigma_S$. Setting $\frac{M}{\epsilon^2} < 1$ and rearranging gives that $\epsilon > \sqrt{M}$ and hence $|r| > \sqrt{M}+1$. Thus $\Sigma_S \setminus \mathbb{R}$ is contained in the annulus $(\sqrt{M}+1)^{-1} \leqslant |z| \leqslant \sqrt{M}+1$. Later we will generalize this analysis to get a bound on the product of an arbitrary number of roots.

\begin{definition}
Let $S$ be a non-empty bounded normalized set of real numbers such that $1$ is the minimal positive element of $S$ and let $M = \sup(S)$. For $\lambda = \frac{1}{r} \in \mathbb{D} \setminus \mathbb{R}$ we define the (2-step) escape threshold of $\lambda$ with respect to $S$ as:
\begin{equation*}
E_2 = E_2(\lambda, S) = \frac{M}{(|r| - 1)^2}
\end{equation*}
\end{definition}

\begin{corollary}
Let $S$ be a non-empty bounded normalized set of real numbers and let $M = \sup_{s \in S}|s|$. Let $(p_j)_{j=0}^\infty$ be some sequence in $S$ and let $\lambda \in \mathbb{D} \setminus \mathbb{R}$. Let $q_j$ be as in the 2-step recursion. If there is some index $j$ such that $|q_j| > E_2(S, \lambda)$ then $\lambda$ is not a root of $\sum_{j=0}^\infty p_jz^j$.
\end{corollary}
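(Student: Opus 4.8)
The plan is to deduce this from two applications of the growth estimate buried in the proof of Theorem~\ref{1-step}, mirroring exactly the way Theorem~\ref{2-step} was derived from Theorem~\ref{1-step}. I would argue by contraposition: assume $\lambda$ \emph{is} a root of $P(z)=\sum_{j=0}^\infty p_jz^j$, write $\lambda=\tfrac{1}{r}e^{i\theta}$ with $r=1/|\lambda|>1$, and aim to show $|q_j|\le E_2(S,\lambda)$ for every $j$. Since $S\subseteq\mathbb{R}$ and $\lambda\notin\mathbb{R}$, the series $P$ has real coefficients, so $\overline{\lambda}\ne\lambda$ is also a root of $P$.

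First I would peel off one linear factor. As $P$ is holomorphic on $\mathbb{D}$ (boundedness of $S$) and vanishes at $\lambda$, the quotient $Q_1(z)=P(z)/(1-z/\lambda)$ is holomorphic on all of $\mathbb{D}$, hence its Taylor coefficients $q^{(1)}_j$ have radius of convergence at least $1$; moreover they satisfy the $1$-step recursion $q^{(1)}_0=p_0$, $q^{(1)}_{j+1}=p_{j+1}+(1/\lambda)q^{(1)}_j$ with the (complex) ratio $1/\lambda$ of modulus $r$. The growth estimate from the proof of Theorem~\ref{1-step} (the Corollary following it, whose proof only uses a modulus bound on the coefficients, not that $S$ is a set or is normalized) now applies verbatim to this sequence: if some $|q^{(1)}_j|$ exceeded $\tfrac{M}{r-1}$ the $q^{(1)}_j$ would grow at least geometrically, contradicting radius of convergence $\ge 1$. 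Therefore $|q^{(1)}_j|\le\tfrac{M}{r-1}$ for all $j$.

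Next I would repeat the argument with the second factor. Since $Q_1$ is holomorphic on $\mathbb{D}$ and $Q_1(\overline{\lambda})=0$ (using $1-\overline{\lambda}/\lambda\ne 0$), the quotient $Q(z)=Q_1(z)/(1-z/\overline{\lambda})$ is again holomorphic on $\mathbb{D}$, and its coefficients $\tilde q_j$ obey the $1$-step recursion $\tilde q_0=q^{(1)}_0$, $\tilde q_{j+1}=q^{(1)}_{j+1}+(1/\overline{\lambda})\tilde q_j$, now ``fed'' by the sequence $(q^{(1)}_j)$, which is bounded in modulus by $M':=\tfrac{M}{r-1}$, with ratio $1/\overline{\lambda}$ again of modulus $r$. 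The same growth estimate forces $|\tilde q_j|\le\tfrac{M'}{r-1}=\tfrac{M}{(r-1)^2}=E_2(S,\lambda)$ for every $j$. Composing the two factorizations gives $P(z)=Q(z)(1-z/\lambda)(1-z/\overline{\lambda})=Q(z)(1-2r\cos\theta\,z+r^2z^2)$, and matching coefficients in this identity reproduces precisely the recursion \eqref{rec2}; hence $\tilde q_j=q_j$ for all $j$. Thus $\lambda$ being a root of $P$ forces $|q_j|\le E_2(S,\lambda)$ for every $j$, which is the contrapositive of the claim.

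The step I expect to require the most care is the second application of the growth estimate: one must be comfortable running the machine of Theorem~\ref{1-step} on the intermediate sequence $(q^{(1)}_j)$ rather than on coefficients drawn from a genuine normalized set, noting in particular that its bound $M/(r-1)$ need not exceed $1$ — this is harmless, since the estimate is purely about geometric growth and never invokes normalization. The remaining subtleties are routine: checking that $Q_1$ and $Q$ are holomorphic on all of $\mathbb{D}$ (precisely where the hypothesis ``$\lambda$ is a root'' and the fact $\lambda\ne\overline{\lambda}$ get used), and the coefficient bookkeeping that identifies the composed recursion with \eqref{rec2}.
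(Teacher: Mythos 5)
Your proposal is correct and follows the same route as the paper. The paper does not state a formal proof for this corollary; instead, the paragraph just before the $E_2$ definition is the intended justification (``Since we apply this theorem twice in our 2-step recursion...''), and your argument is simply a careful write-up of exactly that two-fold application of the 1-step growth estimate, including the correct observation that the estimate only uses a uniform modulus bound on the feeding sequence, not that it comes from a normalized set.
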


As with the 1-step recursion we can use the 2-step to get an algorithm for generating $\Sigma_S$. This is worse in that the 1-step works for any set $S$ whereas the 2-step requires $S$ to contain only real numbers, moreover the 2-step does not generate pictures of real roots. On the other hand it avoids any computations involving complex numbers and therefore may sometimes be preferable.

Finally we use the 2-step recursion to prove that when the total gap of $S$ is small then $\Sigma_S$ contains an annulus.

\begin{theorem}
\label{annulus}
Let $S$ be a symmetric non-empty finite set of real numbers with total gap $1$ and let $M = \max_{s \in S} |s|$. Then $\Sigma^1_S$ contains the annulus $\frac{1}{\sqrt{M}} \leqslant |z| < 1$. In particular, we have that:
$$\rho_{inn}(S) \leqslant \frac{1}{\sqrt{M}} $$
\end{theorem}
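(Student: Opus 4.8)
<br>

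The goal is to show that for any $\lambda = \frac{1}{r}e^{i\theta}$ with $\frac{1}{\sqrt{M}} \leqslant |\lambda| < 1$ and $\theta$ arbitrary, there is a power series with coefficients in $S$, starting with $1$, having $\lambda$ as a root. By the 2-step recursion (Theorem \ref{2-step}), it suffices to exhibit a sequence $(p_j)$ in $S$ with $p_0 = 1$ such that the associated sequence $(q_j)$ defined by $q_0 = p_0$, $q_1 = p_1 + 2r\cos(\theta)q_0$, $q_{j+2} = p_{j+2} + 2r\cos(\theta)q_{j+1} - r^2 q_j$ stays uniformly bounded. Since $|\lambda| \geqslant \frac{1}{\sqrt M}$ means $r \leqslant \sqrt M$, hence $r^2 \leqslant M$.

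My plan is a greedy construction: maintain an invariant of the form $|q_j| \leqslant B$ for a suitable bound $B$ (I expect $B$ on the order of $1$, perhaps exactly $1$ since the total gap is $1$, or maybe slightly larger like $\sqrt M$ or $M+1$ — the exact constant needs to be tuned). At each step, having chosen $p_0,\dots,p_{j+1}$ with $|q_{j+1}|, |q_j| \leqslant B$, I want to pick $p_{j+2} \in S$ so that $q_{j+2} = p_{j+2} + (2r\cos\theta\, q_{j+1} - r^2 q_j)$ again satisfies $|q_{j+2}| \leqslant B$. Write $w = 2r\cos(\theta)q_{j+1} - r^2 q_j$ for the "forced" part. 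Since $|q_{j+1}| \leqslant B$ and $|q_j| \leqslant B$ and $2r|\cos\theta| \leqslant 2\sqrt M$ while $r^2 \leqslant M$, we get $|w| \leqslant (2\sqrt M + M)B$. The catch is that $w$ is generally complex while $S \subseteq \mathbb R$, so we can only cancel $\mathrm{Re}(w)$; the imaginary part of $q_{j+2}$ equals $\mathrm{Im}(w)$ no matter what we choose. So the real strategy is: choose $p_{j+2}$ to be the element of $S$ closest to $-\mathrm{Re}(w)$; since $S$ has total gap $1$ and $S = -S$ contains everything up to $M$ in absolute value with gaps $\leqslant 1$, provided $|\mathrm{Re}(w)| \leqslant M$ we can make $|\mathrm{Re}(q_{j+2})| \leqslant \frac12$ (half the gap), while if $|\mathrm{Re}(w)| > M$ we take $p_{j+2} = \mp M$ and get $|\mathrm{Re}(q_{j+2})| \leqslant |w| - M$. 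The imaginary part requires a separate, more careful argument — this is where the real work lies.

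The key to controlling $\mathrm{Im}(q_j)$ is that it satisfies the \emph{same} linear recursion with the real forcing terms $p_j$ dropping out: $\mathrm{Im}(q_{j+2}) = 2r\cos(\theta)\mathrm{Im}(q_{j+1}) - r^2\mathrm{Im}(q_j)$, a homogeneous linear recurrence with characteristic roots $re^{\pm i\theta}$, i.e. $\overline{r}$ and $r$ — wait, more precisely the roots are $r e^{i\theta}, re^{-i\theta}$ which have modulus $r \leqslant \sqrt M$. Since $r$ could be as large as $\sqrt M > 1$, this homogeneous recursion is \emph{not} automatically bounded; its solutions grow like $r^j$. So the imaginary part is not self-correcting and I cannot ignore it. The fix: observe that whatever real $p_j$ I choose, the full $q_j$ I am building should be interpreted as the Taylor coefficients of $Q(z) = P(z)/((1-rz e^{i\theta})(1-rze^{-i\theta}))$, and boundedness of $(q_j)$ is equivalent to $P$ vanishing at both $\lambda$ and $\overline\lambda$. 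Rather than track $\mathrm{Re}$ and $\mathrm{Im}$ separately, I would track the pair $(q_j, q_{j+1})$ as a vector and show the greedy choice keeps it in a fixed compact region. Concretely: after choosing $p_{j+2}$ optimally for the real part, bound $|q_{j+2}|^2 = \mathrm{Re}(q_{j+2})^2 + \mathrm{Im}(w)^2$ where $\mathrm{Im}(w) = 2r\cos(\theta)\mathrm{Im}(q_{j+1}) - r^2\mathrm{Im}(q_j)$; to close the induction I need an invariant that simultaneously bounds $\mathrm{Re}(q_j)$, $\mathrm{Im}(q_j)$, and prevents $\mathrm{Im}$ from drifting. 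I anticipate the right invariant involves a weighted norm adapted to the matrix $\begin{pmatrix} 0 & 1 \\ -r^2 & 2r\cos\theta\end{pmatrix}$, or alternatively splitting into cases on $\theta$ (near the real axis $\cos\theta \approx \pm 1$ the dynamics are nearly a single real contraction; near the imaginary axis $\cos\theta \approx 0$ it is nearly a rotation-dilation by $\pm ir$).

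The main obstacle, then, is precisely the control of $\mathrm{Im}(q_j)$: the greedy real choice gives no leverage on it, and the homogeneous imaginary recursion is expansive for $r > 1$. I expect the resolution to mirror the dynamical viewpoint of \textbf{[Ba]}, \textbf{[So]}: think of the state $(q_j, q_{j+1}) \in \mathbb{C}^2$, note the map $(u,v) \mapsto (v,\; p + 2r\cos\theta\, v - r^2 u)$ for $p$ ranging over $S$ is a contracting IFS on an appropriate invariant compact set when the gap is $1$ and $r \leqslant \sqrt M$, and show that compact set is nonempty and contains a point with first coordinate $1$ — or run the greedy argument keeping $|q_j|$ below a bound $B$ chosen so that $(2\sqrt M + M)B - M \leqslant B$ forces $B \geqslant \frac{M}{\,1 - 2\sqrt M\,}$, which fails for $M$ large, signalling that the naive bound is too lossy and the true invariant must exploit cancellation between the $2r\cos\theta\, q_{j+1}$ and $-r^2 q_j$ terms (i.e. the fact that consecutive forced choices are correlated). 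Once that compact invariant region is identified, the rest — verifying the coefficients lie in $S$, that $p_0 = 1$ is attainable, and concluding $\lambda \in \Sigma_S^1$ — is routine, and the bound on $\rho_{inn}$ follows immediately from the definition.
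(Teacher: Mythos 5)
There is a genuine and early-appearing gap: you treat the $q_j$ from the 2-step recursion as potentially complex and then spend most of your effort worrying about controlling $\mathrm{Im}(q_j)$, but in fact all the $q_j$ are automatically real. The recursion $q_{j+2} = p_{j+2} + 2r\cos(\theta)q_{j+1} - r^2 q_j$ has real multipliers $2r\cos\theta$ and $r^2$, the forcing terms $p_j$ lie in $S \subseteq \mathbb{R}$, and the initial data $q_0 = 1$, $q_1 = p_1 + 2r\cos\theta$ are real, so by induction $q_j \in \mathbb{R}$ for all $j$. Equivalently, $Q(z) = P(z)/(1 - 2r\cos\theta\, z + r^2 z^2)$ is a quotient of real power series by a real polynomial and therefore has real Taylor coefficients. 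You even write down the homogeneous recurrence $\mathrm{Im}(q_{j+2}) = 2r\cos\theta\,\mathrm{Im}(q_{j+1}) - r^2\,\mathrm{Im}(q_j)$; combined with $\mathrm{Im}(q_0) = \mathrm{Im}(q_1) = 0$ this forces $\mathrm{Im}(q_j) \equiv 0$, so the ``main obstacle'' you identify does not exist. Keeping everything real is precisely the advantage of the 2-step recursion over the 1-step one, as the paper notes.

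Your second observation is sound but you stop short of the fix. You correctly see that a symmetric invariant $|q_j| \leqslant B$ is too lossy: the naive bound $|2r\cos\theta\, q_{j+1} - r^2 q_j| \leqslant (2\sqrt M + M)B$ overshoots $M$ and the induction cannot close. The resolution in the paper is a \emph{one-sided} invariant, $0 \leqslant q_j \leqslant 1$. After reducing (by the symmetry $z \mapsto -z$ of $\Sigma_S^1$, using $S = -S$) to the case $\cos\theta \geqslant 0$, the forced term $w = 2r\cos\theta\, q_j - r^2 q_{j-1}$ is a \emph{difference} of two nonnegative quantities, one at most $2\sqrt M$ and the other at most $M$, hence $w \in [-M,\, 2\sqrt M] \subseteq [-M,\, M+1]$. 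Since $S$ is symmetric with total gap $1$ and $\max|S| = M$, one can always choose $p_{j+1} \in S$ so that $q_{j+1} = p_{j+1} + w$ lands back in $[0,1]$ (take $p_{j+1} = -M$ when $w > M$, and otherwise the smallest element of $S$ that is $\geqslant -w$). This asymmetric invariant is exactly the ``cancellation between the $2r\cos\theta\, q_{j+1}$ and $-r^2 q_j$ terms'' you anticipated but did not pin down; with it, no IFS or weighted-norm machinery is needed.
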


\begin{proof}
Without loss of generality assume $S$ is normalized. Fix a complex number $z = re^{i\theta}$ with $1 \leqslant r \leqslant \sqrt{M}$. If $z$ is real then the claim follows from the previous theorem so without loss of generality we can assume $z$ is an imaginary number. Moreover, by the symmetries of $\Sigma_S$ we can assume $\cos(\theta) \geqslant 0$. We will argue by the 2-step recursion.

Pick $p_0 = 1$. Then it follows that:
\begin{align*}
p_1 &= q_1 - 2r \cos(\theta).
\end{align*}

We can estimate $0 \leqslant 2r\cos(\theta) \leqslant 2\sqrt{M} \leqslant M+1$.
Since the total gap is $\leqslant 1$ we can pick $p_1$ to be an element in $[-M, M]$ such that $0 \leqslant q_1 \leqslant 1$. Suppose we have defined $p_0$, $p_1$, ..., $p_j$ and that the corresponding $q_0, q_1$, ..., $q_j$ satisfy $0 \leqslant q_k \leqslant 1$.
Now consider the recursion:
\begin{align*}
p_{j+1} &= q_{j+1} - 2r\cos(\theta)q_{j} + r^2q_{j-1}.
\end{align*}
 
 Or put another way:
 \begin{align*}
 q_{j+1} &= p_{j+1} + 2r \cos(\theta)q_j - r^2 q_{j-1}.
 \end{align*}
 
 We can estimate the size of the second and third term by:
 \begin{align*}
 -M \leqslant 2r\cos(\theta)q_j - r^2q_{j-1} \leqslant 2\sqrt{M} \leqslant M+1.
 \end{align*}

Again since the total gap is $\leqslant 1$ we may pick $p_{j+1}$ so that $0 \leqslant q_{j+1} \leqslant 1$

Since the sequence $q_j$ that we have constructed is uniformly bounded it follows that $1/r \in \Sigma_S$ as wanted.
\end{proof}

At this point it is natural to ask if this annulus is the largest contained in $\Sigma_S$ for $S$ a finite non-empty symmetric set of real numbers with total gap 1. A quick computation shows that for the angle $\pi/2$ that $\Sigma_S$ contains at most $i[\frac{1}{\sqrt{M+1}}, 1)$. Thus the best possible annulus would be only slightly more optimal. Since $\Sigma_{0, \pm 1}$ is known to contain an annulus $\frac{1}{\sqrt{2}} \leqslant |z| < 1$ it seems plausible that the best possible annulus is in fact $\frac{1}{\sqrt{M+1}} \leqslant |z| < 1$.

\subsection{Polynomials}
Since a polynomial can be thought of as a power series whose terms are eventually $0$ it follows that we can specialize the previous 1-step recursion to get the following:

\begin{theorem}
If $P(z) = p_0 + p_1z + ... + p_nz^n$ is a polynomial and $1/r \in \mathbb{D}$ then $P(1/r) = 0$ if and only if the numbers $q_0, q_1, ..., q_{n}$ defined by:
\begin{align*}
q_0 = p_0, & & q_{j+1} = p_{j+1} + rq_j
\end{align*}
satisfies $q_n = 0$.
\end{theorem}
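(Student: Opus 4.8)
The plan is to read this off Theorem \ref{1-step} by regarding the polynomial $P(z) = p_0 + \dots + p_n z^n$ as a power series whose coefficients vanish past index $n$. Concretely, I would set $p_j = 0$ for all $j > n$ and extend the sequence $(q_j)_{j=0}^\infty$ by the same rule $q_0 = p_0$, $q_{j+1} = p_{j+1} + r q_j$. For $j \geqslant n$ the recursion degenerates to $q_{j+1} = r q_j$, so $q_{n+k} = r^k q_n$ for all $k \geqslant 0$. Since $|r| > 1$, the tail $(q_{n+k})_{k\geqslant 0}$ is uniformly bounded exactly when $q_n = 0$ (and in that case it is identically zero from index $n$ on). Hence the whole sequence $(q_j)_{j=0}^\infty$ is uniformly bounded if and only if $q_n = 0$, and Theorem \ref{1-step} — applied with $S$ any finite set containing $p_0,\dots,p_n$ and $0$, so that boundedness of the coefficient set is automatic — says uniform boundedness of $(q_j)$ is equivalent to $P(1/r) = 0$. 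Combining the two equivalences gives the claim.

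An alternative that makes the algebraic content transparent is a direct one-line induction: one verifies that
$$ q_j = \sum_{i=0}^{j} r^{\,j-i} p_i = r^{j} \sum_{i=0}^{j} p_i r^{-i}, $$
so in particular $q_n = r^n P(1/r)$, and since $r \neq 0$ this vanishes precisely when $P(1/r) = 0$. This is nothing more than Horner's scheme computing $r^n P(1/r)$, and it is the finite truncation of the identity $P(z) = (1-rz)\sum_{j\geqslant 0} q_j z^j$ already exploited in the proof of Theorem \ref{1-step}.

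I do not expect any genuine obstacle here: the statement is a purely algebraic shadow of Theorem \ref{1-step}, and the only point requiring a little care is the bookkeeping at the high end of the index range — namely that once the coefficients $p_j$ are exhausted the recursion simply multiplies by $r$, so the exponential blow-up that obstructs boundedness in Theorem \ref{1-step} is controlled entirely by whether $q_n$ is zero. I would present the first (deduction from Theorem \ref{1-step}) argument as the main proof, since it requires essentially no computation, and mention the Horner identity as a remark for readers who prefer to see $q_n = r^n P(1/r)$ explicitly.
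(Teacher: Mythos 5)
Your proof is correct. The paper itself offers no written proof of this theorem: it simply says that since a polynomial is a power series whose coefficients are eventually $0$, the statement ``follows by specializing'' Theorem~\ref{1-step}. Your first argument fills in exactly that specialization, and does so carefully — extending $p_j = 0$ for $j > n$, observing that the tail obeys $q_{n+k} = r^k q_n$, and noting that with $|r| > 1$ this tail is bounded precisely when $q_n = 0$.

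One small caveat worth flagging: Theorem~\ref{1-step} as \emph{stated} is an existential claim about membership in $\Sigma_S$ (``there exists a sequence $(p_j)$ in $S$ \dots''), whereas you need the pointwise version — that for \emph{this fixed} coefficient sequence, $P(1/r) = 0$ if and only if the corresponding $(q_j)$ is bounded. That pointwise equivalence is exactly what the \emph{proof} of Theorem~\ref{1-step} establishes (both directions of its argument are per-sequence), so invoking it is sound, but it is the proof rather than the literal theorem statement that you are using. Your alternative via Horner's scheme — the identity $q_j = \sum_{i=0}^{j} r^{\,j-i} p_i$, so $q_n = r^n P(1/r)$ — sidesteps this entirely and is arguably cleaner: it is a one-line induction requiring no appeal to analytic boundedness, no assumption $|r| > 1$, and no artificial choice of a finite coefficient set $S$. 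If you were to present one argument, the Horner identity is the sharper choice; the specialization argument has the virtue of making the logical dependence on Theorem~\ref{1-step} explicit, which is evidently what the paper had in mind.
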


In fact we could generalize this to an arbitrary number of roots and say that for $P(z)$ to have roots $1/r_1, ..., 1/r_k$ we must have that the coefficients $q_j$ of the power series expansion of $Q(z) = \frac{P(z)}{(1-r_1z)...(1-r_kz)}$ at $0$ satisfies $q_n = q_{n-1} = ... = q_{n-k+1} = 0$.

\section{How Deep Can The Roots Get?}

\subsection{The Depth of Complex Roots}

As the title says, the goal of this section is to discuss how deep complex roots can actually get into the unit disc. Recall that for $S$ a non-empty set of real numbers, we define:
\begin{align*}
\rho_{out}(S) = \sup \left\{ R > 0 \mid (\Sigma_S\setminus \mathbb{R}) \subseteq A_R \right\}
\end{align*}
where $A_R$ is the annulus $\{ R \leqslant |z| < 1 \}$.

We begin with a broad result for how many roots can simultaneously be deep in the disc and then turn specifically to complex roots.

\begin{theorem}
\label{prod ineq}
If $S$ is a (possibly infinite) bounded non-empty set of (possibly complex) numbers and $P(z) = \sum_{j=0}^\infty p_jz^j$ is a power series with coefficients in $S$ and with $p_0 =1$ then for any roots $\lambda_1, ..., \lambda_k \in \mathbb{D}$ of $P(z)$, we have that:

$$\prod_{j=1}^k (|\lambda_j|^{-1} - 1) \leqslant \sup_j |p_j|.$$

In particular, $P(z)$ does not have $k$ roots $\lambda_1, ..., \lambda_k$ all with $|\lambda_j| < \frac{1}{\sup_j|p_j|^{1/k} + 1}$.

\end{theorem}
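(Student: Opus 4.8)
The plan is to iterate the quantitative form of the 1-step recursion (Theorem \ref{1-step} and its corollary). Write $r_j = 1/\lambda_j$, so $|r_j| > 1$, and set $M = \sup_j |p_j|$, which is finite (since $S$ is bounded) and at least $1$ (since $p_0 = 1$). We may assume the roots are listed so that $\prod_{j=1}^k (1 - r_j z)$ divides $P(z)$ as a holomorphic function on $\mathbb{D}$ — equivalently, no root occurs in the list more often than its multiplicity as a zero of $P$; when the $\lambda_j$ are distinct this is automatic. I would state the theorem under this reading of ``$k$ roots''.

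First I would isolate the quantitative content buried in the proof of Theorem \ref{1-step}: if $F(z) = \sum_j c_j z^j$ is a power series with $|c_j| \leqslant N$ for all $j$, and $1/r$ (with $|r| > 1$) is a root of $F$, then $G(z) := F(z)/(1-rz)$ is holomorphic on $\mathbb{D}$ and its Taylor coefficients are all bounded by $N/(|r|-1)$. Indeed, the coefficients of $G$ satisfy the 1-step recursion with the $c_j$ playing the role of the $p_j$, and the growth estimate in that proof shows that if any coefficient of $G$ exceeded $N/(|r|-1)$ then the coefficients of $G$ would tend to infinity, contradicting that $1/r$ is a root; crucially, that argument only uses the uniform bound $|c_j| \leqslant N$ and not membership in any prescribed set. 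I would also record the trivial fact that dividing by $(1-rz)$ leaves the constant term unchanged.

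Then I would run the induction. Put $P_0 = P$ and $P_i = P_{i-1}/(1 - r_i z)$ for $i = 1, \dots, k$. At stage $i$, the point $\lambda_i$ is still a zero of $P_{i-1}$ — its order of vanishing there is its multiplicity in $P$ minus the number of earlier factors $1 - r_j z$ with $\lambda_j = \lambda_i$, which is at least one by our choice of ordering — so the estimate above applies. Hence if $P_{i-1}$ has all coefficients bounded by $M_{i-1}$ and constant term $1$, then $P_i$ has all coefficients bounded by $M_{i-1}/(|r_i|-1)$ and constant term $1$. Starting from $M_0 = M$ and iterating, $P_k$ has constant term $1$ and every coefficient bounded by $M / \prod_{j=1}^k (|r_j|-1)$. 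Applying this to the constant term gives $1 \leqslant M/\prod_{j=1}^k(|r_j|-1)$, i.e. $\prod_{j=1}^k (|\lambda_j|^{-1} - 1) \leqslant \sup_j|p_j|$. The ``in particular'' clause then follows at once: if every $|\lambda_j| < \bigl(\sup_j|p_j|^{1/k}+1\bigr)^{-1}$, then each factor satisfies $|\lambda_j|^{-1} - 1 > \sup_j|p_j|^{1/k}$, and multiplying the $k$ inequalities contradicts the bound just proved.

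The only genuinely delicate point — and the step I would take most care with — is the bookkeeping when roots coincide: one must be sure that after dividing out the first $i$ factors the point $\lambda_i$ is still a zero, which is why I would fix the convention ``$\lambda_1, \dots, \lambda_k$ listed with multiplicity'' and justify up front that $P$ is divisible by $\prod_{j=1}^k (1-r_j z)$. Everything else is mechanical: bounded coefficients guarantee radius of convergence at least $1$ at each stage, so every $P_i$ is automatically holomorphic on $\mathbb{D}$ with a convergent power series there, and the inequality is just the telescoping product of the per-step bound from the 1-step recursion.
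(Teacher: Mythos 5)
Your proposal is correct and follows essentially the same route as the paper: iterate the 1-step recursion's escape-threshold bound to show that each division of $P$ by a factor $1 - z/\lambda_j$ shrinks the supremum of the coefficients by a factor of $|\lambda_j|^{-1} - 1$ while leaving the constant term equal to $1$, then compare with that constant term. The paper states it as a proof by contradiction and is less explicit about the multiplicity bookkeeping, but the underlying argument is the same.
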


\begin{proof}
Suppose there is such a power series $P(z) = \sum_{j=0}^\infty p_jz^j$ with roots $\lambda_1 = \frac{1}{r_1}e^{i\theta_1}, ..., \lambda_k = \frac{1}{r_k}e^{i\theta_k}$ such that $(r_1-1)(r_2-1)...(r_k-1) > M = \sup_j |p_j|$.

For each index $j$ we define $Q_j(z) = \frac{P(z)}{(1-z/\lambda_1)...(1-z/\lambda_j)}$. Notice that $Q_j(z) = \frac{Q_{j-1}(z)}{1-z/\lambda_j}$ for $1 < j \leqslant k$.

Since each $Q_j(z)$ is holomorphic in some neighbourhood of $0$ we can write $Q_j(z) = \sum_{n=0}^\infty q_n^j z^n$. We know from the 1-step recursion proof that the escape threshold for $Q_1(z)$ is $\frac{M}{(r_1-1)}$ and hence the coefficients of the power series expansion of $Q_1(z)$ are at most$\frac{M}{(r_1-1)}$. Repeating it follows that the escape threshold for each $Q_j(z)$ is at most $\frac{M}{(r_1-1)...(r_j-1)}$ and hence that the coefficients of the power series expansion of $Q_j(z)$ are at most $\frac{M}{(r_1-1)...(r_j-1)}$.

In particular, we have that the escape threshold for $Q_k(z)$ is $\leqslant \frac{M}{(r_1-1)...(r_k-1)} < \frac{M}{M} = 1$ and hence that the coefficients of $Q_k(z)$ are all $<1$. But since $p_0 = 1$ it follows that the power series expansion of $Q_k(z)$ also starts with $1$ which gives a contradiction.
\end{proof}

\begin{remark}
The above theorem essentially says that for any polynomial $P$, the product of all of its roots (minus $1$) bigger than $2$ (times $|p_n|$) is bounded above by the largest coefficient of $P$, that is, by its $\infty-norm$. This closely resembles Landau's inequality \textbf{[La]}:
$$|p_n| \prod_{j=1}^n\max(1, |\lambda_j|) \leqslant ||P||_2 = \sqrt{\sum_{j=1}^n |p_j|^2}. $$
\end{remark}

Let $S$ be a finite symmetric normalized set of real numbers and let $M = \max(S)$. As a consequence of the above, we have that $\Sigma_S$ is contained in the annulus $\frac{1}{M+1}$. This is best possible since we can reach a root of size $\frac{1}{M+1}$ by looking at $1 - \sum_{j=1}^\infty Mz^j$. On the other hand this root is real and so it raises the question if a complex root could get this deep. Since a power series with real coefficients has a complex root $\lambda$ if and only if it also has $\overline{\lambda}$ as a root, it follows from the above theorem that:

\begin{corollary}
If $S$ is a symmetric non-empty finite set of real numbers and $M = \max(S)$ then $\rho_{out}(S) \geqslant \frac{1}{\sqrt{M}+1}$.
\end{corollary}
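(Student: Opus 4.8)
The plan is to deduce the estimate directly from the $k=2$ case of Theorem \ref{prod ineq}, using the fact that a power series with real coefficients vanishes at $\lambda$ if and only if it vanishes at $\overline{\lambda}$.

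First I would fix an arbitrary $\lambda \in \Sigma_S \setminus \mathbb{R}$ and choose a power series $P(z) = \sum_{j=0}^\infty p_j z^j$ with all $p_j \in S$, $p_0 \neq 0$, and $P(\lambda) = 0$. Because the coefficients are real, $P(\overline{\lambda}) = \overline{P(\lambda)} = 0$, so $\overline{\lambda}$ is also a root; and since $\lambda$ is non-real, $\lambda \neq \overline{\lambda}$, so $\lambda$ and $\overline{\lambda}$ are two \emph{distinct} roots of $P$ in $\mathbb{D}$. This distinctness is the one place the hypothesis $\lambda \notin \mathbb{R}$ is genuinely used.

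Next I would normalize the constant term: the series $P(z)/p_0$ has constant term $1$ and coefficients in $\tfrac{1}{p_0}S$. Since $S$ is symmetric and normalized we have $|p_0| \geq 1$ and $\max_{s\in S}|s| = \max(S) = M$, so every coefficient of $P(z)/p_0$ has absolute value at most $M$. Applying Theorem \ref{prod ineq} with $k=2$ and roots $\lambda_1 = \lambda$, $\lambda_2 = \overline{\lambda}$ gives $(|\lambda|^{-1}-1)(|\overline{\lambda}|^{-1}-1) \leq M$. As $|\lambda| = |\overline{\lambda}| \in (0,1)$, the left-hand side is $(|\lambda|^{-1}-1)^2$ with $|\lambda|^{-1}-1 > 0$, hence $|\lambda|^{-1}-1 \leq \sqrt{M}$, i.e. $|\lambda| \geq \frac{1}{\sqrt{M}+1}$.

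Since $\lambda$ was an arbitrary non-real point of $\Sigma_S$, this shows $\Sigma_S \setminus \mathbb{R} \subseteq A_{1/(\sqrt{M}+1)}$ and therefore $\rho_{out}(S) \geq \frac{1}{\sqrt{M}+1}$. There is no real obstacle here: all the work is in Theorem \ref{prod ineq}. The only subtleties worth flagging are that one must normalize $P$ to constant term $1$ before invoking that theorem, and that the inequality is intended for normalized $S$ — rescaling $S$ preserves $\Sigma_S$ (hence $\rho_{out}$) but changes $M = \max(S)$, so the literal inequality can fail if $S$ is not normalized.
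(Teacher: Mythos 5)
Your proposal is correct and is essentially the same as the paper's argument: the paper also derives this corollary directly from Theorem \ref{prod ineq} with $k=2$ applied to the conjugate pair $\lambda, \overline{\lambda}$, stating it in one line while you fill in the normalization of $p_0$ and the resulting arithmetic.
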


Note that this estimate is well known. See for example $\bf{[BBBP]}$. The rest of this section is dedicated to showing that this annulus is not best possible.

\begin{lemma}
Let $S$ be a finite non-empty symmetric normalized set of real numbers and let $M = \max(S)$. Then:
\begin{enumerate}
\item If $P(z) = \sum_{j=0}^\infty p_jz^j$ is a power series with $p_j \in S$ and $p_0 \neq 0$ has two roots $\lambda_1, \lambda_2$ with $|\lambda_j| = \frac{1}{\sqrt{M}+1}$ and such that $\lambda_1 = \overline{\lambda_2}$ and $\lambda_1$ is in the first quadrant. Then $\lambda_1 = \lambda_2 = \frac{1}{\sqrt{M}+1}$ and (after possibly multiplying by -1) $P(z) = 1 - (2\sqrt{M}+1)z + \sum_{j=2}^\infty Mz^j$.

\item If $2\sqrt{M} + 1 \in S$ and if $P_n(z) = \sum_{j=0}^\infty p_{n, j}z^j$ are power series with roots $\lambda_{n, 1}$ and $\lambda_{n, 2}$ with $|\lambda_{n, j}| \rightarrow \frac{1}{\sqrt{M}+1}$ and $p_{n, j} \in S$, $p_{n, 0} = 1$, and $p_{n, 1} = -(2\sqrt{M}+1)$ then for $n$ large enough $\lambda_{n, j} \in \mathbb{R}$ and for every $m$ there is an $N$ such that $n > N$ implies that $p_{n, k} = p_k$ for all $k = 0, 1, ..., m$.

\item if $P_n(z) = \sum_{j=0}^\infty p_{n, j}z^j$ are power series with roots $\lambda_{n, 1}$ and $\lambda_{n, 2}$ with $|\lambda_{n, j}| \rightarrow \frac{1}{\sqrt{M}+1}$ and $p_{n, j} \in S$ then for $n$ large enough (and after possibly multiplying by -1) $p_{n, 0} = 1$ and $p_{n, 1} = -(2\sqrt{M}+1)$.
\end{enumerate}
\end{lemma}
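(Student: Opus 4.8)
The plan is to prove part (1) first — it is the heart of the statement — and to deduce (3) and (2) from it by a compactness argument. For part (1) I would apply Theorem~\ref{prod ineq} to $P(z)/p_0$, which has constant term $1$, with the pair of roots $\lambda_1,\overline{\lambda_1}=\lambda_2$; each satisfies $|\lambda_j|^{-1}-1=\sqrt M$, so the inequality reads $M=(\sqrt M)^2\le \sup_j|p_j|/|p_0|\le M/|p_0|$. This forces $|p_0|=1$ — so after multiplying $P$ by $\pm1$ we may take $p_0=1$ — and $\sup_j|p_j|=M$, and it makes every estimate in the proof of Theorem~\ref{prod ineq} an equality. Reading off that equality using the observation of Section~3 (once a quotient coefficient attains the $1$-step escape threshold, every later quotient coefficient attains it and every later numerator coefficient is maximal), the quotient $Q_1=P/(1-z/\lambda_1)$ has constant term $1$ and all later coefficients of modulus exactly $\sqrt M$, while $Q_2=Q_1/(1-z/\overline{\lambda_1})=P/(1-2r\cos\theta\,z+r^2z^2)$, where $\lambda_1=r^{-1}e^{i\theta}$ and $r=\sqrt M+1$, has constant term $1$ and every coefficient of modulus exactly $1$. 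Since $P$ and the real quadratic divisor are real, $Q_2$ is real, so $Q_2(z)=1+\sum_{n\ge1}\varepsilon_nz^n$ with each $\varepsilon_n\in\{\pm1\}$; set $\varepsilon_0=1$.

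Expanding $P=(1-2r\cos\theta\,z+r^2z^2)Q_2$ gives $p_n=\varepsilon_n-c\varepsilon_{n-1}+r^2\varepsilon_{n-2}$ for $n\ge2$, where $c:=2r\cos\theta\in[0,2\sqrt M+2]$ and $r^2=M+2\sqrt M+1$. From $|p_2|\le M$ (recall $\varepsilon_0=1$) one gets $c\ge2\sqrt M$; and for each $n\ge2$, $|p_n|\le M$ forces that $\varepsilon_{n-2}=+1$ implies $\varepsilon_{n-1}=+1$, and, if in addition $c<2\sqrt M+2$, also $\varepsilon_n=-1$ (and symmetrically with all signs reversed). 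If $c<2\sqrt M+2$ this is self-contradictory, since $\varepsilon_0=1$ forces $\varepsilon_1=1$ and $\varepsilon_2=-1$ (case $n=2$) while $\varepsilon_1=1$ forces $\varepsilon_2=+1$ (case $n=3$). Hence $c=2\sqrt M+2$, i.e. $\cos\theta=1$, so $\theta=0$ and $\lambda_1=\lambda_2=(\sqrt M+1)^{-1}$; now $\varepsilon_{n-2}=1$ still forces $\varepsilon_{n-1}=1$, so $\varepsilon_n\equiv1$ and $P(z)=(1-(\sqrt M+1)z)^2\sum_{n\ge0}z^n=1-(2\sqrt M+1)z+\sum_{n\ge2}Mz^n$. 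The main obstacle is getting the equality analysis of Theorem~\ref{prod ineq} exactly right — that all coefficients of $Q_2$ have modulus $1$ — together with the short sign argument above; the rest is bookkeeping.

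Parts (3) and (2) both rest on compactness of $S^{\mathbb N}$ (finiteness of $S$). As in part (1), Theorem~\ref{prod ineq} applied to $P_n/p_{n,0}$ with $\lambda_{n,1},\lambda_{n,2}$ gives $|p_{n,0}|\to1$ (since $(|\lambda_{n,1}|^{-1}-1)(|\lambda_{n,2}|^{-1}-1)\to M$), so $|p_{n,0}|=1$ for $n$ large and we normalize $p_{n,0}=1$. If the conclusion of (3) failed along a subsequence, I would pass to a further subsequence with $P_n\to P_\infty$ coefficientwise — so $p_{\infty,1}\ne-(2\sqrt M+1)$ — and $\lambda_{n,j}\to\mu_j$, $|\mu_j|=(\sqrt M+1)^{-1}$. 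Coefficientwise convergence is locally uniform on $\mathbb D$, so $P_\infty$ has coefficients in $S$, $P_\infty(\mu_j)=0$, and by Hurwitz $P_\infty$ has two roots (with multiplicity) of modulus $(\sqrt M+1)^{-1}$. By the symmetry of $\Sigma_S$ we may take $\lambda_{n,1}$, hence $\mu_1$, in the closed first quadrant; then either $\mu_1$ is non-real (so $\mu_1,\overline{\mu_1}$ is a conjugate pair as in the hypothesis of (1)) or $\mu_1=(\sqrt M+1)^{-1}$ is a double root (so (1) applies with $\lambda_1=\lambda_2=(\sqrt M+1)^{-1}$). Either way (1) forces $P_\infty=P^0$, where $P^0(z):=1-(2\sqrt M+1)z+\sum_{j\ge2}Mz^j$, so $p_{\infty,1}=-(2\sqrt M+1)$, a contradiction; this proves (3). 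For (2) the hypotheses already give $p_{n,0}=1$ and $p_{n,1}=-(2\sqrt M+1)$ (the assumption $2\sqrt M+1\in S$ only ensures this condition is non-vacuous), and the same argument shows every coefficientwise subsequential limit of $(P_n)$ equals $P^0$; thus $P_n\to P^0$ coefficientwise, which is precisely the assertion that for every $m$ there is $N$ with $p_{n,k}=p_k$ for all $k\le m$ whenever $n>N$.

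It remains, for (2), to show $\lambda_{n,j}\in\mathbb R$ for $n$ large. Write $P_n=P^0+E_n$; since $p_{n,0}=1$, $p_{n,1}=-(2\sqrt M+1)$, and $p_{n,k}\le M$ for $k\ge2$, the power series $E_n$ has non-positive coefficients, so $E_n(x)\le0$ for $x\in(0,1)$, and $E_n\to0$ locally uniformly. As $P^0(z)=(1-(\sqrt M+1)z)^2/(1-z)$ vanishes to order $2$ at $x_0:=(\sqrt M+1)^{-1}$ and is positive on $(0,1)\setminus\{x_0\}$, for $n$ large $P_n(x_0)=E_n(x_0)\le0$ while $P_n(x_0-\delta)>0$, so $P_n$ has a real zero in $(x_0-\delta,x_0]$; and by Hurwitz $P_n$ has exactly two zeros (with multiplicity) in $\{|z-x_0|\le\delta\}$ and none elsewhere in $\{|z|\le1-\varepsilon\}$. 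Since these two zeros are closed under complex conjugation and one of them is real, both are real; and since $|\lambda_{n,j}|\to x_0$, for $n$ large $\lambda_{n,j}$ lies among them and is therefore real. The compactness steps here are routine; the one trick worth isolating is that $E_n$ has non-positive coefficients, which biases the split of the double root of $P^0$ so that it cannot open into a non-real conjugate pair.
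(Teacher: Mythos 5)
Your proof of part (1) is essentially the paper's: both reduce to the observation that the constant coefficient of $Q_2 = P/(1-z/\lambda_1)(1-z/\overline{\lambda_1})$ already attains the 2-step escape threshold exactly, so every $|q_j| = 1$, and then a short sign argument forces $q_j \equiv 1$, $\cos\theta = 1$, and $p_j = M$ for $j \ge 2$. You reach the equality of thresholds via Theorem~\ref{prod ineq} where the paper applies the 2-step recursion directly, but that is only a presentational difference; the paper's proof of Theorem~\ref{prod ineq} is itself the iterated 1-step recursion, so the two are the same computation.

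For parts (2) and (3) you take a genuinely different route. The paper proves (3) by a direct case analysis on $p_{n,0}$ and $p_{n,1}$: the cases $p_{n,1} < -(2\sqrt M + 1)$ and $p_{n,1} > -(2\sqrt M + 1)$ are disqualified by evaluating $P_n$ at the real point $(\sqrt M + 1)^{-1}$ and by tracking $q_{n,1}$ against the escape threshold, respectively. You instead use compactness of $S^{\mathbb N}$, pass to a coefficientwise (hence locally uniform) limit $P_\infty$, invoke Hurwitz to place two roots of $P_\infty$ on the circle $|z| = (\sqrt M + 1)^{-1}$, and then quote part (1) to identify $P_\infty = P^0$ and obtain a contradiction. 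For the realness claim in (2), the paper does not actually prove it inside the lemma; it defers it to the proof of Theorem~\ref{rho-out}, where it is deduced from Theorem~\ref{prod ineq} with $k = 3$: since each $P_n$ is eventually forced to have a small \emph{real} root near $(\sqrt M + 1)^{-1}$, a non-real $\lambda_n$ would give a conjugate pair plus this real root, three roots all smaller than $1/(M^{1/3}+1)$, impossible. Your argument instead notes $E_n = P_n - P^0$ has non-positive coefficients, so $P_n((\sqrt M +1)^{-1}) \le 0$, locates a real zero by the IVT, and then uses Rouché/Hurwitz to show $P_n$ has exactly two zeros near $(\sqrt M +1)^{-1}$, which must both be real by conjugate symmetry. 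Both routes are correct. Your version is tighter and more self-contained — it proves the realness where the lemma states it, and replaces the paper's hand estimates with standard normal-family tools and a single appeal to part (1) — while the paper's is more elementary and closer in spirit to the recursion machinery it has already set up. One small point worth making explicit in either version: when the limit roots coincide ($\mu_1 = \mu_2 = (\sqrt M +1)^{-1}$), the appeal to part (1) needs the limit to be a zero of multiplicity two, which follows from Hurwitz only because $\lambda_{n,1}$ and $\lambda_{n,2}$ are, in every application, a distinct conjugate pair; the paper is silently using the same convention.
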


\begin{proof}[Proof of (1)]

\

Note that for $P$ to have two roots so deep into the disc we must have $p_0 = \pm 1$. Possibly multiplying the series by $-1$ we may assume $p_0 = 1$. 

Let $Q_1(z) = \frac{P(z)}{1-z/\lambda_1}$ and let $Q(z) = \frac{Q_1(z)}{1-z/\lambda_2} = \frac{P(z)}{(1-z/\lambda_1)(1-z/\lambda_2)}$. Since $P(z)$ has roots at $\lambda_1$ and $\lambda_2$ and is holomorphic on $\mathbb{D}$ we know that $Q_1(z)$ and $Q(z)$ are both also holomorphic on $\mathbb{D}$. If we write out their power series expansions $Q_1(z) = \sum_{j=0}^\infty q_{1, j}z^j$ and $Q(z) = \sum_{j=0}^\infty q_jz^j$ then the $q_j$ they satisfy the 2-step recursion:
\begin{align*}
q_0 &= p_0 \\
q_1 &= p_1 + 2(\sqrt{M}+1)\cos(\theta)q_0 \\
q_{j+2} &= p_{j+2} + 2(\sqrt{M}+1)\cos(\theta)q_{j+1} - (\sqrt{M}+1)^2q_j
\end{align*}

where $\cos(\theta)$ is the argument of $\lambda_1$. Note that since $\lambda_1$ could be real it may be that $\cos(\theta) = 1$. Also, note that the escape threshold of $Q(z)$ is exactly 1.

Notice $p_0 = q_{0, 0} = q_0 = 1$ which is exactly equal to the escape threshold for $Q(z)$. This implies that $|q_j| = 1$ for every index $j$. Since $q_j$ is real this means $q_j \in \{-1, 1 \}$. If there is an index $j$ such that $q_j = 1$ and $q_{j+1} = -1$ then $|q_{j+2}|$ will be $> 1$ giving a contradiction, therefore since $q_0 = 1$ we have $q_j = 1$ for every $j$ and hence that $p_j = M$ for every $j \geqslant 2$. Thus $Q(z) = \sum_{j=0}^\infty z^j$ and hence $P(z) = (1-z/\lambda_1)(1-z/\lambda_2)\sum_{j=0}^\infty z^j$. On the other hand, we know that $p_0 = 1$ and $p_2 = p_3 = ... = M$. This is only possible if $\lambda_1 = \lambda_2 = \frac{1}{\sqrt{M}+1}$ and if $p_1 = -(2\sqrt{M} + 1)$ as claimed.

\bigskip
\noindent \textit{Proof of (2)}

Suppose $P_n(z) = \sum_{j=0}^\infty p_{n, j}z^j$ are power series with $p_{n, j} \in S$ and with roots $\lambda_{n, 1}$ and $\lambda_{n, 2}$ converging to $\frac{1}{\sqrt{M} + 1}$ and that $p_{n, 0} = 1$ and $p_{n, 1} = -(2\sqrt{M} + 1)$. Let $P(z) = 1-(2\sqrt{M}+1)z + \sum_{j=2}^\infty Mz^j$. Then we know that:
\begin{align*}
|P(\frac{1}{\sqrt{M} + 1}) - P_n(\frac{1}{\sqrt{M}+1}) | \rightarrow 0.
\end{align*}

Let $D_n(z) = P(z) - P_n(z) = \sum_{j=2}^\infty (M-p_{n, j})z^j$ Then we have $D_n(\frac{1}{\sqrt{M}+1}) \rightarrow 0$. Note that $D_n(z) > 0$ for every positive real number $z$.

If there is some index $j$ such that for infinitely many $n$, $p_{n, j} \neq M$ then $D_n(\frac{1}{\sqrt{M}+1}) \geqslant (M-p_{n, j})(\frac{1}{\sqrt{M}+1})^j$ which means infinitely many $D_n(\frac{1}{\sqrt{M}+1})$ have a uniform non-zero lower bound and hence that this can not converge to $0$.

Thus we must have that for every index $j$ there exists an $N$ such that $n > N$ implies for every $2 \leqslant k < n$, $p_{k, j} = M$.

\bigskip
\noindent \textit{Proof of (3)}

If $|p_{n, 0}| > 1$ then $P_n(z)$ can not have two different roots both deeper then $\frac{1}{\sqrt{M/p_0}+1}$ and hence they can not approach $\frac{1}{\sqrt{M}+1}$. Thus we can assume $|p_{n, 0}| = 1$ for $n$ large enough and hence, after possibly multiplying by $-1$, that $p_{n, 0} = 1$.

If $p_{n, 1} \neq -(2\sqrt{M}+1)$ for all $n$ then because $p_{n, 1} \in S$ and $S$ is finite we must have $|p_{n, 1} + (2\sqrt{M}+1)|$ is uniformly $> 0$.

if $p_{n, 1} < -(2\sqrt{M}+1)$ for infinitely many $n$ then for such $n$ and for any positive real number $z$, $P_n(\frac{1}{\sqrt{M}+1})$ is uniformly $< 1 -(2\sqrt{M}+1)\frac{1}{\sqrt{M}+1} + \sum_{j=2}^\infty M(\frac{1}{\sqrt{M}+1})^j = 0$ and hence $P_n(\frac{1}{\sqrt{M}+1})$ does not converge to $0$ which is a contradiction.

Now suppose $p_{n, 1} > -(2\sqrt{M}+1)$ for infinitely many $n$. Again this means that $2\sqrt{M}+1 + p_{n, 1}$ is uniformly $> 0$ for such $n$. Let $\lambda_n$ be a complex root of $P_n$ in the first quadrant such that $\lambda_n \rightarrow \frac{1}{\sqrt{M}+1}$. Then we have $\lambda_n = \frac{1}{r_n}e^{i\theta_n}$ where $\theta_n \rightarrow 0$ and $r_n \rightarrow \sqrt{M}+1$.

Consider the 2-step recursion:
\begin{align*}
p_{n, 0} &= q_{n, 0}\\
p_{n, 1} &= q_{n, 1} - 2r_n\cos(\theta_n)q_{n, 0} \\
p_{n, j+2} &= q_{n, j+2} - 2r_n\cos(\theta_n)q_{n, j+1} + r^2_nq_{n, j}
\end{align*}

We know that $p_{n, 0} = 1$ and hence $q_{n, 0} = 1$. Then $q_{n, 1} = p_{n, 1} + 2r_n\cos(\theta)$. Notice that as $n \rightarrow \infty$ we have $2r_n\cos(\theta) \rightarrow 2\sqrt{M} + 2$. On the other hand, $p_{n, 1}$ is uniformly $> -(2\sqrt{M} + 1)$ meaning $q_{n, 1}$ is uniformly $> 1$. But the escape threshold for the $q_{n, j}$ tends to $1$ as $n \rightarrow \infty$ giving a contradiction.

Since, for infinitely many $n$ having $p_{n, 1} < -(2\sqrt{M}+1)$ or $p_{n, 1} > -(2\sqrt{M}+1)$ are both impossible, we must have that for $n$ large enough, $p_{n, 1} = -(2\sqrt{M}+1)$ as wanted.

\end{proof}

\begin{theorem}
\label{rho-out}
Let $S$ be a finite non-empty symmetric normalized set of real numbers and let $M = \max(S)$. Then there is no sequence of numbers $\lambda_n \in \Sigma_S \setminus \mathbb{R}$ such that $|\lambda_n| \rightarrow \frac{1}{\sqrt{M}+1}$. Moreover, we have that:

$$\frac{1}{\sqrt{M}+1} < \rho_{out}(S) \leqslant \frac{1}{\sqrt{M+k}} $$
for $k = \max \{ s \in S \mid 1 \leqslant s < 2\sqrt{M} + 1 \}$.
\end{theorem}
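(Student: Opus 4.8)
I would prove the three assertions in the order: the non-accumulation statement first, then the upper bound on $\rho_{out}$ (which also yields $\Sigma_S\setminus\mathbb R\neq\emptyset$), and finally the strict lower bound, which combines the two.

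For the non-accumulation statement, the plan is to argue by contradiction and feed the hypothetical sequence into the three-part Lemma above. Suppose $\lambda_n\in\Sigma_S\setminus\mathbb R$ with $|\lambda_n|\to\frac1{\sqrt M+1}$. Each $\lambda_n$ is a root of some power series $P_n$ with coefficients in $S$ and nonzero constant term, and since $S\subseteq\mathbb R$ the conjugate $\overline{\lambda_n}$ — distinct from $\lambda_n$ because $\lambda_n\notin\mathbb R$ — is also a root. Applying part (3) of the Lemma to the pair $\lambda_{n,1}=\lambda_n$, $\lambda_{n,2}=\overline{\lambda_n}$ gives that for all large $n$, possibly after rescaling $P_n$ by $-1$, the first two coefficients are $1$ and $-(2\sqrt M+1)$. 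Now I split into two cases. If $2\sqrt M+1\notin S$, then since $S=-S$ we also have $-(2\sqrt M+1)\notin S$, contradicting $p_{n,1}=-(2\sqrt M+1)\in S$. If $2\sqrt M+1\in S$, then the hypotheses of part (2) of the Lemma are met, and its conclusion forces $\lambda_{n,j}\in\mathbb R$ for all large $n$, contradicting $\lambda_n\notin\mathbb R$. Either way we reach a contradiction. This case split is where the real content sits: the case $2\sqrt M+1\in S$ cannot be excluded on coefficient grounds alone, so one genuinely needs part (2) of the Lemma to rule out that the near-extremal roots remain complex — hence the inclusion of that Lemma.

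For the upper bound $\rho_{out}(S)\le\frac1{\sqrt{M+k}}$, I would simply exhibit a witnessing power series. Take $P(z)=1-kz+\sum_{j\ge2}Mz^j$; its coefficients lie in $S$ because $1\in S$, $M=\max(S)\in S$, and $-k\in S$ (as $k\in S$ and $S=-S$). Summing the geometric tail and clearing the factor $1-z$ identifies the roots of $P$ in $\mathbb D$ with the roots of $(M+k)z^2-(k+1)z+1$, whose discriminant $(k-1)^2-4M$ is negative exactly because $k<2\sqrt M+1$ by the definition of $k$. So these roots form a genuine non-real conjugate pair, and by Vieta their common modulus is $1/\sqrt{M+k}$ (which is $<1$ since $M+k\ge2$). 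Hence $\Sigma_S\setminus\mathbb R$ contains a point of modulus $\frac1{\sqrt{M+k}}$, which gives the bound and, as a by-product, shows $\Sigma_S\setminus\mathbb R\neq\emptyset$.

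For the strict inequality, the Corollary already gives $\rho_{out}(S)\ge\frac1{\sqrt M+1}$, and the construction above shows $\Sigma_S\setminus\mathbb R\neq\emptyset$, so $\rho_{out}(S)$ equals the infimum of $|z|$ over $z\in\Sigma_S\setminus\mathbb R$. If this infimum were exactly $\frac1{\sqrt M+1}$ there would be a sequence in $\Sigma_S\setminus\mathbb R$ with moduli tending to $\frac1{\sqrt M+1}$, contradicting the first part; hence the inequality is strict. The only places I expect to need care are the case analysis in the first part and checking that the hypotheses of Lemma parts (2) and (3) are precisely what the hypothetical sequence supplies; everything else is assembly and one short computation.
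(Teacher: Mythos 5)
Your handling of the upper bound (the witness polynomial $1 - kz + \sum_{j\geqslant 2} Mz^j$ and the discriminant check) and the final assembly of the strict inequality match the paper's. The gap is in the non-accumulation argument, specifically where you close the case $2\sqrt{M}+1 \in S$ by citing the realness conclusion of Lemma part~(2). That conclusion does appear in the statement of part~(2), but the paper's proof of part~(2) only establishes the coefficient-forcing half --- that $D_n(\frac{1}{\sqrt{M}+1}) = (P - P_n)(\frac{1}{\sqrt{M}+1}) \rightarrow 0$ forces $p_{n,j} = M$ eventually, for $j$ up to any prescribed index --- and never derives that $\lambda_{n,j} \in \mathbb{R}$. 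The realness is in fact re-derived inside the proof of \textbf{Theorem \ref{rho-out}} itself, and this is the step your argument is missing. Namely: once the first $m$ coefficients of $P_n$ are forced to agree with the extremal series $P(z) = 1 - (2\sqrt{M}+1)z + \sum_{j\geqslant 2} Mz^j = \frac{(1-(\sqrt{M}+1)z)^2}{1-z}$, one observes that $P_n(\frac{1}{\sqrt{M}+1}) \leqslant 0$ while $P_n(0) = 1 > 0$, so $P_n$ acquires a real root in $(0, \frac{1}{\sqrt{M}+1}]$, and this real root can be pushed arbitrarily close to $\frac{1}{\sqrt{M}+1}$ by taking $m$ large. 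If $\lambda_n$ were non-real, $P_n$ would then carry three distinct roots --- $\lambda_n$, $\overline{\lambda_n}$, and the real one --- all eventually of modulus below $\frac{1}{M^{1/3}+1}$, which contradicts the product inequality of \textbf{Theorem \ref{prod ineq}}. That application of \textbf{Theorem \ref{prod ineq}} is the mathematical heart of the non-accumulation statement; your proof treats it as already supplied by the Lemma, which the Lemma's proof does not actually do.
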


\begin{proof}
Arguing by contradiction suppose there was such a sequence $\lambda_n$. By the symmetry of $\Sigma_S$ we may assume $\lambda_n$ are all in the first quadrant. Then since $\Sigma_S$ is compact, possibly passing to a subsequence, we must have that $\lambda_n$ converges to some $\lambda \in \Sigma_S$. By $(1)$ of the previous proposition it follows that $\lambda_n \rightarrow \frac{1}{\sqrt{M}+1}$. 

Let $P_n(z) = \sum_{j=0}^\infty p_{n, j} z^j$ be power series with $p_{n, j} \in S$ such that $P_n(\lambda_n) = 0$. By $(3)$ we have that for $n$ large enough, $p_{n, 0} = 1$ and $p_{n, 1} = -(2\sqrt{M}+1)$. Note that if $-(2\sqrt{M}+1)$ is not in $S$ we already have a contradiction. In particular, we must have that $M \geqslant 2\sqrt{M}+1$.

By $(2)$ we have that for any index $m$ there is an $N$ such that $n > N$ implies $p_{k, j} = M$ for $2 \leqslant k \leqslant m$. 

Consider a power series $P(z) = 1 - (2\sqrt{M}+1)z + \sum_{j=2}^m Mz^j + \sum_{j=m+1}^\infty p_jz^j$ for some $m \in \mathbb{N}$ and some choice of $p_j \in S$. Alternatively, $P(z) = 1 - (2\sqrt{M}+1)z + \sum_{j=2}^\infty Mz^j - \sum_{j=m+1}^\infty (M-p_j)z^j$. If we have $|z| < 1$ then $P(z) = \frac{(1-(\sqrt{M}+1)z)^2}{1-z} - \sum_{j=m+1}^\infty (M-p_j)$. As $p_j \in S$ we have $\sum_{j=m+1}^\infty (M-p_j)z^j$ is positive for all $z > 0$.  Since $\frac{(1-(\sqrt{M}+1)z)^2}{1-z}$ has a root at $z = \frac{1}{\sqrt{M}+1}$ we have that $P(z)$ is negative at $\frac{1}{\sqrt{M}+1}$ whereas it is positive at $z=0$, thus $P(z)$ has a real root in $[0, \frac{1}{\sqrt{M}+1}]$. Moreover, by taking $m$ large we can make the real root as arbitrarily close to $\frac{1}{\sqrt{M}+1}$.

All the previously mentioned $P_n(z)$ are of the form of $P(z)$ above and therefore, by taking $n$ large enough we know $P_n(z)$ has a real root less than $\frac{1}{\sqrt{M}+1}$ as we want. If $\lambda_n$ is complex then $P_n(z)$ also $\overline{\lambda_n}$ approaching $\frac{1}{\sqrt{M}+1}$ meaning the $P_n(z)$ have three roots all approaching $\frac{1}{\sqrt{M}+1}$. But this is impossible as no such power series can have three roots all smaller than $\frac{1}{M^{1/3}+1}$ giving a contradiction. Therefore for $n$ large enough, $\lambda_n$ is real.

Since $\lambda_n$ will be real for $n$ large it follows that there are no complex numbers in $\Sigma_S$ which approach $\frac{1}{\sqrt{M}+1}$ and hence $\rho_{out}(S)$ is strictly larger than $\frac{1}{\sqrt{M}+1}$ giving the first half of the inequality.

For the upper bound on $\rho_{out}(S)$ fix $k$ such that $1 < k < 2\sqrt{M}+1$ and consider the power series $f(z) = 1 - kz + Mz^2+Mz^3 + ...$. For $|z| < 1$ we have that $f(z) = \frac{1-(k+1)z + (M+k)z^2}{1-z}$. Since $1 < k < 2\sqrt{M}+1$ we have that the roots of $f(z)$ are complex and their absolute value is $\frac{1}{\sqrt{M+1}}$.

\end{proof}

The proof above heavily relied on the fact that $S$ was a finite set. If we allow $S$ to be infinite it is easy to show the annulus is best possible. For example, if $S = [-100, -1] \cup [1, 100]$ then we can take a sequence of power series $P_n(z) = 1 - k_nz + \sum_{j=2}^\infty 100z^j$ for $k_n = 20 + \frac{n}{n+1}$. Since the tail is geometric, for $|z| < 1$ this is the same as $1 - k_nz + \frac{100z^2}{1-z} = \frac{1 - (k_n + 1) + (100 + k_n)z^2}{1-z}$. Since $k_n < 21$ the quadratic polynomial $1 - (k_n+1)z + (100+k_n)z^2$ has complex roots and so their absolute value is $\frac{1}{\sqrt{100+k_n}}$. As $n \rightarrow \infty$ we have $k_n \rightarrow 21$ and so the absolute value approaches $\frac{1}{11}$ as wanted.

\subsection{Depth At Each Angle}

We now wish to refine our estimate by fixing a specific angle of interest and studying the depth of roots only at that angle. This subject has been studied carefully in \textbf{[BBBP]} for the case of infinite sets $S$. Specifically, for $S$ a non-empty symmetric normalized set of real numbers we wish to estimate:
\begin{align*}
\rho_{\theta}(S) = \inf \left\{ R > 0 \mid Re^{i\theta} \in \Sigma_S \right\}.
\end{align*}

As $S$ is symmetric we can restrict our focus to $\theta \in [0, \pi/2]$. We already know that $\rho_{\pi/2} = \frac{1}{\sqrt{M+1}}$ and that $\rho_0 = \frac{1}{M+1}$ we really we want to know for angles in $(0, \pi/2)$.

\begin{definition}
Let $S$ be a non-empty symmetric normalized set of real numbers bounded above. Let $M = \sup_{s \in S} |s|$ and $\lambda = \frac{1}{r}e^{i\theta}$ for $r > 1$ and $\theta \in (0, \pi/2]$ such that $r^2-2r\cos(\theta) \neq 1$. We define the angled escape threshold $E = E(S, \lambda)$ as $E = \frac{M}{r^2-2r\cos(\theta)-1}$.
\end{definition}

\begin{lemma}
Let $S$ be a non-empty symmetric normalized set of real numbers bounded above. Let $\lambda = \frac{1}{r}e^{i\theta} \in \mathbb{D}$ with $\theta \in (0, \pi/2]$ such that $r^2-2r\cos(\theta) > 1$. If $p_j \in S$ for each $j$ and $q_j$ are defined recursively from $p_j$ and $\lambda$ as in the 2-step then $\lambda$ is a root of $\sum_{j=0}^\infty p_jz^j$ if and only if $|q_j| \leqslant E(S, \lambda)$ for each $j$.
\end{lemma}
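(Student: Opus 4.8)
The plan is to mimic the proof of the 1-step recursion and its escape-threshold corollary, but now applied to the degree-two denominator factor $1-2r\cos(\theta)z+r^2z^2$ whose two reciprocal roots are $\lambda$ and $\overline{\lambda}$. First I would set $P(z)=\sum p_jz^j$, note that since $S$ is bounded (above, and symmetric, hence bounded) $P$ is holomorphic on $\mathbb{D}$, and form $Q(z)=\frac{P(z)}{1-2r\cos(\theta)z+r^2z^2}$, whose only possible poles in $\mathbb{D}$ are at $\lambda,\overline{\lambda}$, which both have modulus $1/r<1$. As in the 2-step recursion, $Q$ is holomorphic near $0$ with power series $\sum q_jz^j$ satisfying exactly the recursion $q_0=p_0$, $q_1=p_1+2r\cos(\theta)q_0$, $q_{j+2}=p_{j+2}+2r\cos(\theta)q_{j+1}-r^2q_j$. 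Then $\lambda$ is a root of $P$ iff $Q$ extends holomorphically across $\lambda$ (and $\overline\lambda$), i.e. iff $Q$ is holomorphic on all of $\mathbb{D}$, i.e. iff the $q_j$ are uniformly bounded (by Theorem~\ref{2-step}); so the real content is the equivalence ``$(q_j)$ bounded'' $\iff$ ``$|q_j|\le E$ for all $j$''.

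The nontrivial direction is: if some $|q_J|>E=\frac{M}{r^2-2r\cos(\theta)-1}$ then $(q_j)$ is unbounded. The key estimate comes from rewriting the recursion as $q_{j+2}+r^2q_j = p_{j+2}+2r\cos(\theta)q_{j+1}$ — no, more usefully: I would track the pair $(q_{j+1},q_j)$ and show that once $|q_J|>E$, the quantity $|q_{j+1}|-|q_j|$ (or better, $|q_{j+1}|-c|q_j|$ for a suitable $c\ge 1$) is eventually increasing and bounded below by a positive constant, forcing $|q_j|\to\infty$. Concretely, from $q_{j+2}=p_{j+2}+2r\cos(\theta)q_{j+1}-r^2q_j$ one gets, when $|q_{j+1}|\ge|q_j|$ and $\theta\le\pi/2$ (so $\cos\theta\ge 0$), $|q_{j+2}| \ge r^2|q_j| - 2r\cos(\theta)|q_{j+1}|$... which has the wrong sign, so instead I would use $|q_{j+2}|\ge 2r\cos(\theta)|q_{j+1}| - r^2|q_j| - M$ when the vector turns ``the right way,'' and handle the genuinely two-dimensional nature by passing to the companion matrix $\begin{pmatrix}2r\cos\theta & -r^2\\ 1&0\end{pmatrix}$, whose eigenvalues are $re^{\pm i\theta}$ of modulus $r>1$. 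The clean argument is: write $\binom{q_{j+1}}{q_j}=A\binom{q_j}{q_{j-1}}+\binom{p_{j+1}}{0}$; since $A$ has both eigenvalues of modulus $r>1$, the inhomogeneous part is negligible and if the homogeneous part ever dominates (which is exactly the threshold condition) the iterates escape. I expect this matrix/eigenvalue estimate, made quantitative so that the precise constant $E=\frac{M}{r^2-2r\cos\theta-1}$ falls out (rather than merely ``some threshold exists''), to be the main obstacle; the factor $r^2-2r\cos\theta-1$ should appear as $|\det(A-I)|$ or equivalently $|(\mu_1-1)(\mu_2-1)|$ for the eigenvalues $\mu_j=re^{\pm i\theta}$, matching how $|r|-1$ appeared in the 1-step case.

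The converse direction (if $|q_j|\le E$ for all $j$ then $(q_j)$ is bounded, hence $\lambda$ is a root) is immediate since $E$ is a fixed finite constant, so boundedness is automatic and Theorem~\ref{2-step} applies directly. I would also note that the hypothesis $r^2-2r\cos(\theta)>1$ is exactly what makes $E>0$ and makes $\binom{q_{j+1}}{q_j}$ escape in a controlled way; this is the ``angled'' analogue of the condition $|r|>1$ in the 1-step setting, and geometrically it says $\lambda$ and $\overline\lambda$ are separated enough from the reciprocal picture that the denominator's reciprocal roots lie strictly inside $\mathbb{D}$ with room to spare. Finally, to present it cleanly I would first prove the quantitative escape lemma for sequences satisfying $q_{j+2}=p_{j+2}+2r\cos(\theta)q_{j+1}-r^2q_j$ with $|p_j|\le M$, stated as: if $|q_J|>E$ for some $J$ then $|q_j|\to\infty$; then combine it with Theorem~\ref{2-step} to conclude. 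This keeps the analytic core (the matrix estimate) isolated from the complex-analytic bookkeeping already done in Theorem~\ref{2-step}.
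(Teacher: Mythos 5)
Your plan for the easy direction is correct and matches the paper: once one knows $|q_j|\le E$ for all $j$, boundedness is trivial and Theorem~\ref{2-step} gives that $\lambda$ is a root. You also correctly reduce the hard direction (modulo Theorem~\ref{2-step}) to showing that a single exceedance $|q_J|>E$ forces $(q_j)$ to be unbounded. Where things go astray is in how you propose to prove that.

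The paper never shows unboundedness directly. Instead it uses a triangle-inequality chain: if $|q_j|>E$ but $|q_{j+1}|,|q_{j+2}|<E$, then $|p_{j+2}|\ge r^2|q_j|-2r\cos\theta\,|q_{j+1}|-|q_{j+2}|>(r^2-2r\cos\theta-1)E=M$, a contradiction (here $\cos\theta\ge0$ is used). This yields only that infinitely many $|q_j|\ge E$, i.e.\ the $q_j$ can never eventually vanish. For a \emph{polynomial} $P$ with $\lambda,\overline\lambda$ as roots, $Q=P/(1-2r\cos\theta z+r^2z^2)$ is again a polynomial so the $q_j$ \emph{do} eventually vanish, and the conclusion follows for polynomials. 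The paper then passes from polynomials to power series by approximating $\lambda$ with roots $\lambda_k$ of the partial sums $P_k$ and using continuity of $q_j(\cdot)$ and $E(S,\cdot)$. This two-stage argument deliberately sidesteps the two-dimensional escape dynamics that you flag as ``the main obstacle.''

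Your proposed route — push a companion-matrix/eigenvalue argument to a quantitative escape estimate — is exactly the hard part you never complete, and I don't think it closes without essentially reinventing the paper's workaround. Two concrete problems: (i) the chain inequality only shows $|q_j|\ge E$ infinitely often, not that $|q_j|\to\infty$; turning a single exceedance into escape requires controlling the rotational part of $A=\begin{pmatrix}2r\cos\theta&-r^2\\1&0\end{pmatrix}$, and you yourself note the naive estimates have the wrong sign. (ii) Your heuristic that $r^2-2r\cos\theta-1$ should be $|\det(A-I)|=|(re^{i\theta}-1)(re^{-i\theta}-1)|$ is off: that determinant equals $r^2-2r\cos\theta+1$, not $-1$. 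The $-1$ in the paper's constant comes from the $|q_{j+2}|$ term in the triangle inequality, not from eigenvalue arithmetic. So the key quantitative step in your plan is both unproved and aimed at the wrong constant; you would do better to adopt the paper's polynomial-plus-approximation reduction, which converts the escape problem into the much easier ``$q_j$ cannot eventually vanish'' statement.
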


\begin{proof}
The reverse implication follows from the regular 2-step since $|q_j| \leqslant E(S, \lambda)$ implies that $|q_j|$ is bounded.

Suppose that for some choice of $(p_j)_{j=0}^\infty$ in $S$ and for some index $j$ we had $|q_j| > E(S, \lambda)$. Then we have:

\begin{align*}
p_{j+2} = q_{j+2} - 2r\cos(\theta)q_{j+1} + r^2q_j.
\end{align*}

Suppose that $|q_{j+1}|$ and $|q_{j+2}|$ are both $< E(S, \lambda)$. Then:

\begin{align*}
|p_{j+2}| > (r^2-2r\cos(\theta)-1)E(S, \lambda) = M
\end{align*}

Which gives a contradiction. Thus at least one of $q_{j+1}$ and $q_{j+2}$ must have absolute value at least $E(S, \lambda)$. This implies that $\sum_{j=0}^\infty p_jz^j$ is not a polynomial and hence $\lambda \notin \Sigma_{S}^{(fin)}$. On the other hand, we know that any element $\lambda \in \Sigma_S$ can be approximated by elements $\lambda_k = \frac{1}{r_k}e^{i\theta_k} \in \Sigma_{S}^{(fin)}$. In particular, we can approximate $\lambda$ by some choice of roots $\lambda_k$ of the parital sums $P_k(z) = \sum_{j=0}^k p_jz^j$. As these are polynomials with coefficients in $S$, we have that the $q_j(\lambda_k)$ attained from applying the 2-step recursion to $P_k$ with $\lambda_k$ will satisfy $|q_j(\lambda_k)| \leqslant E(S, \lambda_k)$. For each fixed index $j$ we know that $q_j(\lambda)$ is a continuous function of $\lambda$ and that $E(S, \lambda)$ is a continuous function of $\lambda$. Thus, since $|q_{j, k}| \leqslant E(S, \lambda_k)$ for each $k$ and each $j$, we have that $|q_j| \leqslant E(S, \lambda)$ for each $j$ as claimed.

\end{proof}

\begin{corollary}
Let $S$ be a non-empty symmetric normalized set of real numbers bounded above. Let $\lambda = \frac{1}{r}e^{i\theta} \in \mathbb{D}$ with $r > 1$ and $\theta \in (0, \pi/2]$. If $0 < E(S, \lambda) < 1$ then $\lambda \notin \Sigma_S$. In particular, we have that:
\begin{align*}
\rho_{\theta}(S) \geqslant \frac{1}{\cos(\theta) + \sqrt{\cos(\theta) + M + 1}}.
\end{align*}
\end{corollary}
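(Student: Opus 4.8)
The plan is to deduce both assertions from the preceding lemma with essentially no new analysis. For the first assertion, suppose toward a contradiction that $\lambda = \frac{1}{r}e^{i\theta} \in \Sigma_S$. The hypothesis $0 < E(S,\lambda) < 1$ forces $r^2 - 2r\cos(\theta) - 1 > M \geqslant 1 > 0$ (here $M \geqslant 1$ since $S$ is normalized, so $1 \in S$), and in particular $r^2 - 2r\cos(\theta) > 1$, which places us exactly in the regime where the lemma applies. Since $\lambda \in \Sigma_S$ there is a sequence $(p_j)_{j=0}^\infty$ in $S$ with $p_0 \neq 0$ for which $\lambda$ is a root of $\sum_j p_j z^j$; the lemma then gives $|q_j| \leqslant E(S,\lambda) < 1$ for every $j$. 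But $q_0 = p_0$ and $|p_0| \geqslant 1$ because $S$ is normalized, a contradiction. Hence $\lambda \notin \Sigma_S$.

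For the inequality on $\rho_\theta(S)$, I fix $\theta \in (0,\pi/2]$ and ask for which $r$ the first assertion is applicable, i.e. for which $r$ we have $E(S,\lambda) \in (0,1)$. Since $M > 0$, the condition $E(S,\lambda) < 1$ is precisely $r^2 - 2r\cos(\theta) - 1 > M$, and the larger root of $x^2 - 2x\cos(\theta) - (M+1) = 0$ is $r_0 = \cos(\theta) + \sqrt{\cos^2(\theta) + M + 1}$. Thus for every $r > r_0$ the point $\frac{1}{r}e^{i\theta}$ lies outside $\Sigma_S$; equivalently, $R e^{i\theta} \in \Sigma_S$ forces $R \geqslant 1/r_0$, so $\rho_\theta(S) \geqslant 1/r_0$. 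Finally, bounding $\cos^2(\theta) \leqslant \cos(\theta)$ (valid because $0 \leqslant \cos(\theta) \leqslant 1$ on $(0,\pi/2]$) only enlarges the denominator and yields the stated, slightly weaker, bound $\rho_\theta(S) \geqslant \frac{1}{\cos(\theta) + \sqrt{\cos(\theta) + M + 1}}$. It is worth recording that $r_0 \geqslant \sqrt{M+1} > 1$, so this is a genuine constraint strictly inside the disc, and that setting $\theta = \pi/2$ recovers $\rho_{\pi/2}(S) \geqslant \frac{1}{\sqrt{M+1}}$, consistent with the earlier remark.

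Since the substantive analytic input — that exceeding the angled escape threshold forces the $2$-step recursion to blow up, together with the limiting argument passing from $\Sigma_S^{(fin)}$ to $\Sigma_S$ — is already contained in the lemma, I do not expect a real obstacle here. The only points needing care are (a) confirming that $0 < E(S,\lambda) < 1$ actually puts us in the range $r^2 - 2r\cos(\theta) > 1$ where the lemma is valid, and (b) correctly identifying "$E(S,\lambda) < 1$" with the quadratic inequality $r^2 - 2r\cos(\theta) - 1 > M$ rather than with $r^2 - 2r\cos(\theta) - 1 > 0$. Everything remaining is the quadratic formula and the elementary estimate $\cos^2(\theta) \leqslant \cos(\theta)$.
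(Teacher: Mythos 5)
Your proposal is correct and follows essentially the same route as the paper: invoke the preceding lemma, observe that $q_0 = p_0$ has $|p_0| \geqslant 1 > E(S,\lambda)$ to rule out $\lambda \in \Sigma_S$, and then translate $E(S,\lambda) < 1$ into the quadratic inequality $r^2 - 2r\cos\theta - 1 > M$. You've additionally spelled out the quadratic-formula computation and the elementary estimate $\cos^2\theta \leqslant \cos\theta$ that the paper passes over silently, so your write-up is a touch more explicit but not a different argument.
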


\begin{proof}
For a fixed $\theta$ we have that when $r > \cos(\theta) + \sqrt{\cos(\theta)+M+1}$, $r^2-2r\cos(\theta)-1$ will be $> M$ and hence $0 < \frac{M}{r^2 - 2r\cos(\theta) -1} < 1$. Thus by the previous lemma, $\frac{1}{r}e^{i\theta} \notin \Sigma_S$.
\end{proof}

When $\theta = \pi/2$ the above estimate gives $\rho_\theta = \frac{1}{\sqrt{M+1}}$ which is best possible. On the other hand, as $\theta$ approaches $0$ the estimate tends to $\frac{1}{\sqrt{M+2}+1}$ which is much larger then the estimate $\frac{1}{\sqrt{M}+1}$ we saw before. Thus for angles around $\pi/2$ this estimate is better but for angles close to $0$ it is worse. As $M$ grows larger this estimate will be better for larger intervals of angles.

\section{Connectedness}

The goal of this section is to prove:

\begin{theorem}
\label{conn-crit}
If $S$ is a normalized symmetric set of integers containing $0$ then:
	\begin{enumerate}
		\item If the total gap of $S$ is $3$ or more then $\Sigma_S$ is disconnected.
		\item If the total gap of $S$ is $1$ or $2$ and if $M = \max(S) \geqslant 40$ then $\Sigma_S$ is connected and locally connected.
	\end{enumerate}
\end{theorem}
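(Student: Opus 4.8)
The plan is to handle the two parts separately. Part (2) will follow, after a small reduction, from the connectedness criterion of \textbf{[Na]}; part (1) will be obtained by producing an annulus that $\Sigma_S$ cannot meet.

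For part (1), I would first describe $\Sigma_S\cap\mathbb{R}$. For real $1/r\in\mathbb{D}$ (so $r>1$), the $1$-step recursion together with its escape threshold $E_1=\tfrac{M}{r-1}$ shows that $1/r\in\Sigma_S$ if and only if some $m\in S$ with $m\geqslant1$ lies in $W_r$, where $W_r$ is the compact self-similar set with $W_r=\tfrac1r(W_r+S)\subseteq[-E_1,E_1]$ (the attractor of the IFS $\{x\mapsto(x+s)/r\mid s\in S\}$); moreover $W_r=[-E_1,E_1]$ precisely when $r\leqslant1+\tfrac{2M}{g}$, where $g$ is the total gap. When $g\geqslant3$ this forces $W_r$, for $r$ past that threshold, to develop a genuine gap of positive length around the image of the largest gap of $S$, and by self-similarity around the copies of that gap inside the top pieces of $W_r$. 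The technical heart is to choose $r$ in a suitable window near $M+1$ so that none of the finitely many admissible leading coefficients $m\in S$, $m\geqslant1$, lies in $W_r$; since $\Sigma_S\cap\mathbb{R}$ is relatively closed this produces an open interval $(\rho_1,\rho_2)$, disjoint from $\Sigma_S$, with $\tfrac1{M+1}\leqslant\rho_1<\rho_2\leqslant\beta$, where $\beta:=\bigl(1+\sqrt{M+2}\bigr)^{-1}$. I would then invoke the angled depth estimate $\rho_\theta(S)\geqslant\bigl(\cos\theta+\sqrt{\cos\theta+M+1}\bigr)^{-1}$ of the previous section, which shows that every non-real point of $\Sigma_S$ has modulus strictly larger than $\beta$. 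Hence the full annulus $\{\rho_1<|z|<\rho_2\}$ is disjoint from $\Sigma_S$. Since $\tfrac1{M+1}\in\Sigma_S$ has modulus $\leqslant\rho_1$, while the connected set $\Sigma_{\{0,\pm1\}}\subseteq\Sigma_S$ (available because $\{0,\pm1\}\subseteq S$) contains the annulus $\{\tfrac1{\sqrt2}\leqslant|z|<1\}$ and so meets $\{|z|\geqslant\rho_2\}$, the empty annulus separates $\Sigma_S$ into the two non-empty relatively closed sets $\Sigma_S\cap\{|z|\leqslant\rho_1\}$ and $\Sigma_S\cap\{|z|\geqslant\rho_2\}$, so $\Sigma_S$ is disconnected. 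The main obstacle is exactly the middle step: arranging that \emph{every} candidate leading coefficient misses $W_r$ while the forbidden interval stays at depth $\leqslant\beta$; the finitely many small values of $M$ left uncovered by the general estimate (recall $g\geqslant3$ forces $M\geqslant4$) can be settled by checking the finitely many such $S$ directly.

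For part (2), I would first reduce to \textbf{[Na]}. Grouping the power series defining $\Sigma_S$ by their leading coefficient $s$ and dividing through by $s$ gives $\Sigma_S=\bigcup_{m}\Sigma^1_{(1/m)S}$, the union taken over $m\in\{\,|s|\mid s\in S,\ s\neq0\,\}$ (using $S=-S$ to replace $(1/s)S$ by $(1/|s|)S$). Every set $(1/m)S$ shares with $S$ the only features that matter, namely total gap at most $2$ relative to its least positive element and ratio $\max/\min^{+}=M$; as the hypotheses of \textbf{[Na]} depend only on such scale-invariant data and $M\geqslant40$, the criterion applies to each $(1/m)S$ and shows $\Sigma^1_{(1/m)S}$ is connected and locally connected. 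Each of these sets is relatively closed in $\mathbb{D}$ and contains $\Sigma^1_{\{0,\pm1\}}=\Sigma_{\{0,\pm1\}}$ (since $\{0,\pm1\}\subseteq(1/m)S$), which is connected, so their union $\Sigma_S$ is connected. Finally, a finite union of relatively closed, locally connected subsets of $\mathbb{D}$ whose union is connected is itself locally connected — near a point $x$ one first shrinks to a neighbourhood meeting only the pieces that contain $x$, and then takes the union of small connected neighbourhoods of $x$ inside each of them — so $\Sigma_S$ is connected and locally connected. The real content here, and the source of the numerical threshold $M\geqslant40$, is the verification of the quantitative hypotheses of \textbf{[Na]} for the sets $(1/m)S$; the gluing step is soft.
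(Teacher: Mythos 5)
Your overall architecture matches the paper's — find an empty real subinterval to disconnect when the gap is large, and reduce to the criterion of \textbf{[Na]} when the gap is small — but there is a genuine gap in your part (2), and part (1) defers the step that actually needs doing.

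For part (1), the plan is sound (find a separating circle that misses $\Sigma_S$, using the $\frac{1}{\sqrt{M}+1}$ bound for complex roots), but the route through the self-similar set $W_r$ is heavier machinery than the problem needs, and you explicitly leave ``the technical heart'' — choosing $r$ so that every admissible leading coefficient misses $W_r$ — unresolved. The paper instead just sets $r$ by $(M+r)r = s + 3/2$ for a pair $s, s'\in S$ of consecutive positive elements with $s'-s\geqslant 3$ (so that $(M+r)r$ lands in the middle of the gap) and runs the $1$-step recursion by hand for two steps: for any $p_0$ with $|p_0|\geqslant 1$ one gets $|q_1|\geqslant r$ and then $|q_2|\geqslant |(M+r)r - |p_2||\geqslant 3/2$, which exceeds the escape threshold $\tfrac{M}{M+r-1}$. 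That is a three-line computation. The surrounding framing you wrote (forbidden annulus between $\tfrac{1}{M+1}$ and the complex-root depth bound, plus $M\geqslant 4$) is fine.

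For part (2), the decomposition $\Sigma_S=\bigcup_m \Sigma^1_{(1/m)S}$ is correct, but your claim that the hypotheses of \textbf{[Na]} ``depend only on scale-invariant data'' is false, and this is where the argument breaks. The graph criterion is: put an edge between $a,b$ if $(a-b)S\subseteq\Delta S$. Replacing $S$ by $cS$ changes this condition to $c(a-b)S\subseteq\Delta S$, so it is \emph{not} scale-invariant. Concretely, for $S=\{0,\pm1,\dots,\pm M\}$ the graph on $S$ is connected, but on $(1/2)S=\{0,\pm\tfrac12,\pm1,\dots,\pm\tfrac M2\}$ the edge between $0$ and $\tfrac12$ would require $\tfrac12 S\subseteq\Delta S$, which fails because $\Delta S$ consists of integers; one checks that the integer and half-integer vertices fall into separate components, so the graph on $(1/2)S$ is disconnected and \textbf{[Na]} gives you nothing for that piece. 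Moreover, your explanation for the threshold $M\geqslant 40$ (``verification of the quantitative hypotheses of \textbf{[Na]} for $(1/m)S$'') does not point to anything that actually depends on $M$.

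What the paper does instead is show $\Sigma_S=\Sigma^1_S$ outright when $M\geqslant 40$, so that \textbf{[Na]} needs to be applied only once, to $S$ itself, where its graph hypothesis is easily checked using $0\in S$, symmetry, and total gap $\leqslant 2$. The reduction $\Sigma_S=\Sigma^1_S$ is proved by a $2$-step recursion argument showing $\Sigma^1_S$ contains the wider annulus $\tfrac{1}{\sqrt{M/2}+1}\leqslant|z|<1$ (this is where $M\geqslant 40$ enters, via the inequality $M/2+4\sqrt{M/2}+3\leqslant M+1$), while any non-real root of a power series with $|p_0|\geqslant 2$ must already lie in that annulus; real roots are handled by the earlier interval result. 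This removes the need for the gluing lemma entirely. If you want to keep a union-based argument you would have to verify \textbf{[Na]}'s graph hypothesis for each $(1/m)S$ separately — which, as the example above shows, is generally false — so you really do need the paper's $\Sigma_S=\Sigma^1_S$ reduction or some replacement for it.
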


\begin{figure}[h]
\centering
\includegraphics{./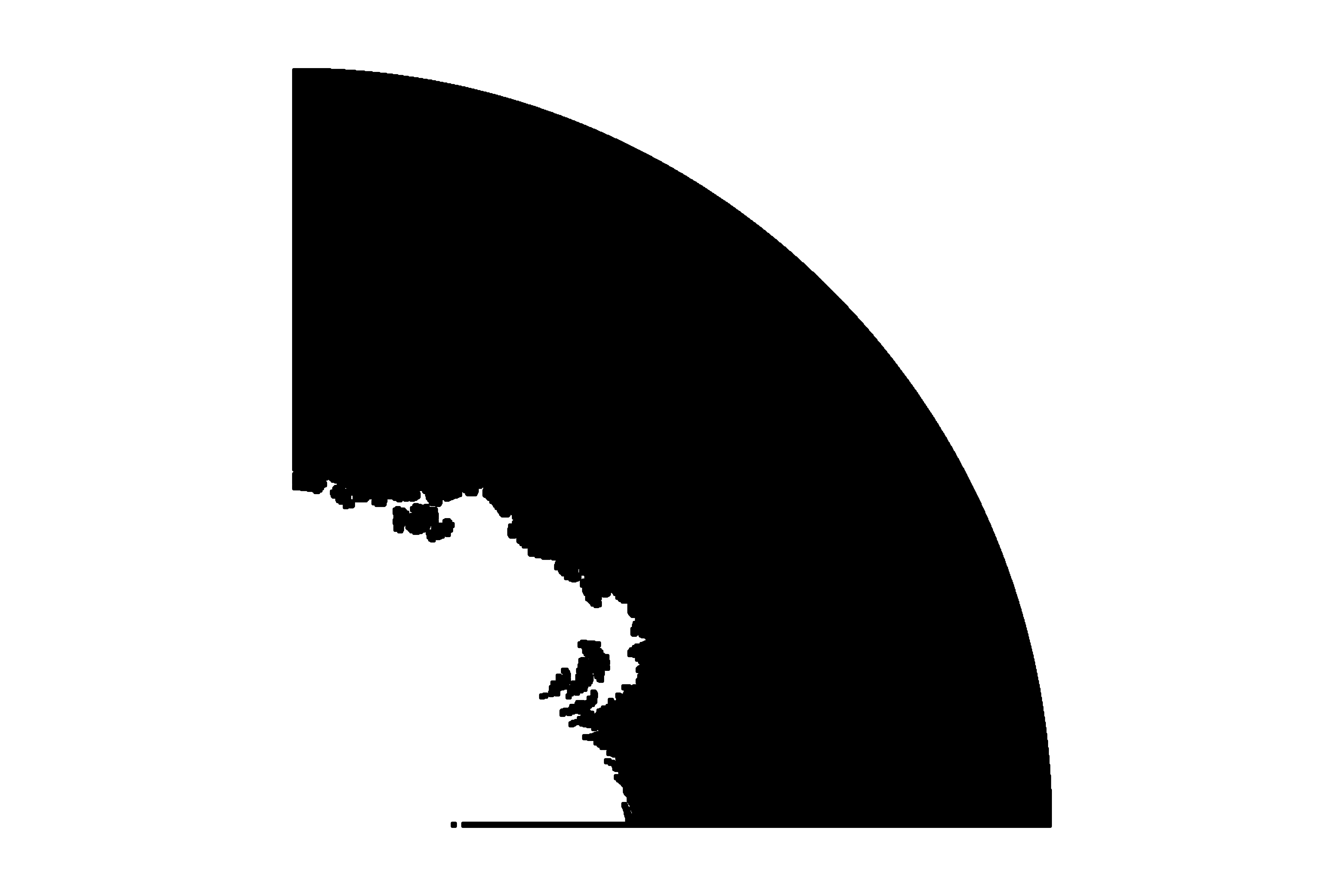}
\caption{$\Sigma_{\{0, \pm 1, \pm 4 \} }$ in the first quadrant.}
\end{figure}

We begin by considering the following subsets of $\Sigma_S$:
\begin{align*}
\Sigma_S^{1} &= \left\{ z \in \mathbb{D} \mid \exists p_1, p_2, ... \in S, 1+ \sum_{j=1}^\infty p_jz^j = 0 \right\}.
\end{align*}

\begin{proposition}
If $S$ is a symmetric non-empty finite subset of real numbers and the total gap of $S$ is at least $3$ then $\Sigma_S \cap \mathbb{R_+}$ and $\Sigma_S^1 \cap \mathbb{R_+}$ are both disconnected. In particular, both $\frac{1}{M}$ and $\frac{1}{M+1}$ are in $\Sigma_S$ and $\Sigma_S^1$ however there is a real number between them not in $\Sigma_S$ nor in $\Sigma_S^1$.
\end{proposition}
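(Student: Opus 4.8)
The plan is to pin down the two distinguished points explicitly and then, via the $1$-step recursion (Theorem~\ref{1-step}), to produce an $r\in(M,M+1)$ for which $\tfrac1r\notin\Sigma_S$.

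Since $\Sigma_S$ and the total gap are unaffected by scaling, I would first take $S$ normalized, so that $1\in S$ and, by symmetry, $M:=\max S\in S$ and $-M\in S$. The consecutive pair $a<b$ of $S$ realizing the total gap has $b-a\ge 3>2$, so it lies among the positive elements (the gap straddling $0$ has width at most $2$); hence $1\le a<b\le M$ and $M\ge a+3\ge 4$. Now $\tfrac1{M+1}\in\Sigma_S^1$, witnessed by $1-M(z+z^2+\cdots)=\tfrac{1-(M+1)z}{1-z}$, and $\tfrac1M\in\Sigma_S^1$, witnessed by $(1-Mz)(1+z^2+z^4+\cdots)=\tfrac{1-Mz}{1-z^2}$, whose only root in $\mathbb D$ is $\tfrac1M$ (here $M>1$, so $\tfrac1M<1$); both series use only the coefficients $1$ and $-M$, which lie in $S$. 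As $\Sigma_S^1\subseteq\Sigma_S$, it then suffices to find $r\in(M,M+1)$ with $\tfrac1r\notin\Sigma_S$: since $\tfrac1{M+1}<\tfrac1r<\tfrac1M$ and $\tfrac1r\notin\Sigma_S$, such an $r$ exhibits a point of $\mathbb R_+\setminus\Sigma_S$ between two points of $\Sigma_S\cap\mathbb R_+$, and likewise of $\Sigma_S^1\cap\mathbb R_+$, so both sets are disconnected.

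To build such an $r$, keep the pair $a<b$. For $r\in(M,M+1)$ the escape threshold is $E_1=\tfrac M{r-1}\in(1,\tfrac M{M-1})\subseteq(1,\tfrac43)$, so by the corollary to Theorem~\ref{1-step} any power series with $p_0\in S\setminus\{0\}$ and a root at $\tfrac1r$ has $|q_j|\le E_1$ for all $j$. The mechanism I would use is to force the first two coefficients and then run off the end of $S$ at step $2$. Suppose $p_0=1$, so $q_0=1$; then $|q_1|=|p_1+r|\le E_1<\tfrac43$ together with $|p_1|\le M$ forces $-p_1\in S\cap[r-E_1,M]$, and since $r-E_1>M-\tfrac43>a$, when the realizing gap abuts $M$ this forces $p_1=-M$, hence $q_1=r-M$. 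Now $|q_2|=|p_2+r(r-M)|\le E_1$ forces $-p_2\in S$ into the $E_1$-neighbourhood of $r(r-M)$. Because $r\mapsto r(r-M)$ is a continuous increasing bijection of $(M,M+1)$ onto $(0,M+1)$, I would choose $r$ with $r(r-M)=\tfrac{a+b}2$; then, since $E_1<\tfrac43<\tfrac32\le\tfrac{b-a}2$, the closed interval $\bigl[\tfrac{a+b}2-E_1,\tfrac{a+b}2+E_1\bigr]$ lies strictly inside the open gap $(a,b)$ and so contains no element of $S$. Thus no admissible $p_2$ exists, and no such power series with $p_0=1$ can exist.

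The main obstacle is making the forcing of $p_0$ and $p_1$ uniform over all $S$. If $S$ meets $(1,\tfrac43)$ then $p_0$ need not be $\pm1$; one restricts $r$ to the (still nonempty) subinterval of $(M,M+1)$ on which $E_1$ falls below $\min\{s\in S:s>1\}$, recovering $p_0=\pm1$, and then checks that $r(r-M)$ can still be aimed into a thickened gap, or else follows the parallel recursion from $q_0=c$ for each such $c$ (these $c$ lie in a tiny window near $1$, so their step-$2$ targets are close together). If the realizing gap does not abut $M$, there may be finitely many extra branches $p_1=-c'$ with $c'\in S$ near $M$; at the chosen $r$ each is either already infeasible ($|q_1|>E_1$) or has $rq_1>M+E_1$, where again no element of $S$ lies within $E_1$. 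When $S\subseteq\mathbb Z$ all this is automatic: $p_0=1$ is forced outright, the only possible extra branch is $p_1=-(M-1)$, and at the $r$ with $r(r-M)=\tfrac{a+b}2\ge\tfrac52$ one has $q_1=r-M+1>E_1$, so that branch is infeasible. For general real $S$ one must in a few configurations carry the recursion one or two further steps, and this bookkeeping is the bulk of the argument; the engine never changes — the forced trajectory is driven into a window of width $>2E_1$ that $S$ misses.
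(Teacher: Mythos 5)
Your proof follows the same route as the paper's: pick $r\in(M,M+1)$ so that $r(r-M)$ lands in the big gap of $S$ (the paper aims at $a+3/2$, you at $(a+b)/2$; both are at distance $\ge 3/2$ from every element of $S$, and both use $E_1<3/2$), argue that $p_0=\pm1$ and $p_1=-M$ are forced, and conclude that no $p_2\in S$ can keep $|q_2|\le E_1$. Your explicit witnesses for $1/M$ and $1/(M+1)$ also agree with the paper's up to a sign.

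The genuine gap is in your handling of the extra $p_1$-branches when the realizing gap does not abut $M$. Your dichotomy ``either $|q_1|>E_1$ or $rq_1>M+E_1$'' is vacuous: since $(M+E_1)/r = M/(r-1) = E_1$, the two alternatives are the same condition, so you are in effect claiming every extra branch is already infeasible. That is false for general real $S$: take $S=\{\pm1,\pm4,\pm4.5,\pm5\}$ (so $a=1$, $b=4$, $M=5$, $r\approx5.458$, $E_1\approx1.12$); the branch $p_0=1$, $p_1=-4.5$ gives $q_1\approx0.96<E_1$ with $rq_1\approx5.23<M+E_1$, and from there the recursion survives many further steps along several sub-branches, so ``one or two further steps'' is not a proof. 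Your integer case is complete, and you have correctly noticed that the real case needs more care than at first appears --- the paper's own proof has the same lacuna, passing from $|q_1|\ge r-M$ to $|q_2|\ge 3/2$ as if $|q_1|$ were exactly equal to its lower bound rather than merely at least it --- but the hard part for real $S$ remains undone.
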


\begin{proof}
Without loss of generality assume $S$ is normalized. Fix consecutive positive elements $s, s'$ in $S$ with $s'-s \geqslant 3$ and let $r\in (0, 1)$ such that $(M+r)r = s + 3/2$. Then for every element $t \in S$ we have $|(M+r)r-t| \geq 3/2$. 

To prove the theorem we first show that $\frac{1}{M+r} \notin \Sigma_S$. To do so notice that the escape threshold for the recursion on the root $\frac{1}{M+r}$ is $\frac{M}{|M+r|-1} < 3/2$ since $M \geqslant 4$ (as otherwise it is not possible to have a total gap of $3$). As a result, if the $q_j$ associated to a power series $P$ ever reach $3/2$ then we know $\frac{1}{M+r}$ is not a root of $P$.

Consider a power series $P(z) = \sum_{j=0}^\infty p_jz^j$ with $p_0 \neq 0$ and $p_j \in S$. We know that $|p_0| \geqslant 1$ and hence that $|q_0 \geqslant 1$ also. Thus $|q_1| = |p_1 + (M+r)q_0| \geqslant M+r - |p_1| \geqslant r$. Then the next step in the recursion gives $|q_2| = |p_2 - (M+r)q_1| \geqslant |(M+r)r - |p_2|| = |s+3/2 - |p_2|| \geqslant 3/2$. Thus we see that the $q_j \rightarrow \infty$ and hence $\frac{1}{M+r}$ is not a root of $P(z)$. This works for every $P(z)$ with $p_j \in S$ and so $\frac{1}{M+r} \notin \Sigma_S$. 

Finally note that both $\frac{1}{M+1}$ and $\frac{1}{M}$ both belong to $\Sigma_S$ (the first being a root of $-1 + Mz + Mz^2 +...$ and the second a root of $1-Mz$) while $\frac{1}{M+r}$ is between them and so the positive real part of $\Sigma_S$ is disconnected.

Since the power series making $\frac{1}{M+1}$ and $\frac{1}{M}$ belong to $\Sigma_S$ begin with $1$ the result also holds for $\Sigma_S^1$.

\end{proof}

\begin{corollary}
If $S$ is a normalized non-empty finite subset of real numbers and the total gap of $S$ is at least $3$ then $\Sigma_S$ is disconnected.
\end{corollary}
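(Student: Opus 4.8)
The plan is to disconnect $\Sigma_S$ by a small circle. Let $d=\inf\{|z|:z\in\Sigma_S\}$ be the depth of $\Sigma_S$. By Lemma \ref{outerannulus} one has $\tfrac1{M+1}\leq d\leq\tfrac1M$: the upper bound because the element of $S$ of absolute value $M$, together with $1\in S$, produces via $\tfrac{1\pm Mz}{1-z^2}$ a power series over $S$ with a root of modulus $\tfrac1M$. Hence $d<\tfrac1{\sqrt M+1}$ whenever $M\geq 3$; a gap of width $\geq 3$ forces $M\geq 2$, and when $M=2$ the gap must straddle $0$, so $-2\in S$ and $d=\tfrac1{M+1}=\tfrac13<\tfrac1{\sqrt2+1}$. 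Since $d<\tfrac1{\sqrt M+1}$, no power series over $S$ has a non-real root of modulus $<\tfrac1{\sqrt M+1}$ — apply Theorem \ref{prod ineq} with $k=2$ to a conjugate pair $\lambda,\bar\lambda$ — so the deepest point of $\Sigma_S$ is real, say $\varepsilon d$ with $\varepsilon=\pm 1$.

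The heart of the matter is to show that a gap of width $\geq 3$ makes this deepest point isolated from above along the real axis, i.e.\ that there is a $\rho$ with $d<\rho<\tfrac1{\sqrt M+1}$ and $\pm\rho\notin\Sigma_S$. Granting this, the circle $\{|z|=\rho\}$ misses $\Sigma_S$ (not at $\pm\rho$ by choice, and nowhere else since $\rho<\tfrac1{\sqrt M+1}$); the deepest point $\varepsilon d$ lies inside it while $\Sigma_S$ accumulates on the unit circle outside it, so $\Sigma_S$ is disconnected.

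Such a $\rho$ is produced, for symmetric $S$, by the Proposition: with consecutive $s<s'$ in $S$, $s'-s\geq 3$ (which for symmetric $S$ lie among the positive elements), one takes $\rho=\tfrac1{M+r}$ with $r\in(0,1)$ solving $(M+r)r=s+\tfrac32$ and checks by the $1$-step recursion that the escape threshold $\tfrac{M}{M+r-1}<\tfrac32$ is exceeded within two steps, symmetry giving $-\rho\notin\Sigma_S$ as well; one checks $d<\rho$ since $r<1$ makes $\rho>\tfrac1{M+1}$ whenever $d=\tfrac1{M+1}$, and the case $d=\tfrac1M$ (the gap then sits near the top of $S$) is handled by isolating $\pm\tfrac1M$ directly. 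For $S$ merely normalized I would run the same recursion, using $\Sigma_S=\Sigma_{-S}$ to put the gap among the nonnegative elements (or leave it straddling $0$: then $0,-1\notin S$, $s'=1$, $s\leq-2$), and picking the ``forbidden'' value $v\in(s,s')$ lying at distance $\geq\tfrac32$ from \emph{every} element of $S$ — possible because the midpoint of a width-$\geq 3$ interval is $\geq\tfrac32$ from either endpoint and, since $1\in S$ keeps the gap on one side of $0$ except in the straddling case, $v$ is then $\geq\tfrac32$ from everything on the far side of $0$ as well — so that the recursion is driven to $\pm v$ and $|q_2|\geq\tfrac32>\tfrac{M}{M+r-1}$.

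The main obstacle is the very first step of this recursion when $S$ is not symmetric. For a symmetric set a root of modulus $\tfrac1{M+r}$ forces $q_1=\pm r$ (one cancels $(M+r)q_0$ against $\mp M\in S$), pinning $(M+r)q_1=\pm v$ and driving $q_2$ squarely into the gap; without symmetry the power series may legally keep $|q_1|$ a little above $r$ when $\mp M\notin S$ and steer $q_2$ toward an element of $S$ lying \emph{opposite} the gap — e.g.\ $S=\{-5,-4,-3,-2,-1,1,4,5\}$ has $(1,4)$ empty but $-2,-3\in S$. Getting past this is the real work: either push $\rho$ closer to $d$ so the shrinking escape threshold absorbs the extra slack, or follow the $q_j$ for more than two steps and control their exponential drift, using that a gap of width $\geq 3$ keeps an interval of width $>2\cdot\tfrac{M}{M+r-1}$ permanently unreachable so the orbit cannot re-synchronize with the extremal period-two orbit. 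I expect the straddling-gap case and the bookkeeping that places a deep point of $\Sigma_S$ strictly inside the separating circle to be the fussiest parts.
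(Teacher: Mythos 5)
Your separating-circle argument is, in substance, the paper's own: the Proposition supplies a radius $\rho\in\bigl(\tfrac{1}{M+1},\tfrac{1}{M}\bigr)$ with $\pm\rho\notin\Sigma_S$, non-real points of $\Sigma_S$ all have modulus at least $\tfrac{1}{\sqrt{M}+1}>\tfrac1M$ once $M\geq 3$, so the circle $|z|=\rho$ misses $\Sigma_S$ entirely and separates the deepest (necessarily real) points from the part of $\Sigma_S$ accumulating on the unit circle. That reproduces the paper's reasoning and is fine for symmetric $S$.

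The obstruction you raise for non-symmetric $S$ is genuine, and your proposal does not overcome it. The Proposition is proved only for \emph{symmetric} $S$: the gap is placed between consecutive \emph{positive} elements $s<s'$, and the estimate that $(M+r)r=s+\tfrac32$ lies at distance $\geq\tfrac32$ from every $t\in S$ uses that the negative elements mirror the positive ones. Your $S=\{-5,-4,-3,-2,-1,1,4,5\}$ shows the failure concretely: here $(M+r)r=\tfrac52$, and from $q_0=1$, $p_1=-5$ one gets $q_1=r$ and then $p_2\in\{-2,-3\}$ gives $|q_2|=\tfrac12$, comfortably under the escape threshold $\tfrac{M}{M+r-1}\approx 1.12$, so the forced two-step escape the Proposition relies on simply does not occur. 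Your proposed repairs (push $\rho$ toward $d$, or follow the orbit for more steps) are left, in your own words, as ``the real work,'' so the proof is incomplete for the Corollary's hypothesis as literally stated. You should be aware the same mismatch is in the paper itself: the Corollary says ``normalized'' but cites the symmetric Proposition, and its deduction ``$M\geq 4$'' likewise needs symmetry (compare $S=\{-2,1\}$: normalized, total gap $3$, $M=2$). Read as ``symmetric normalized,'' consistent with the rest of Section 5, your argument matches the paper and is correct; read literally, neither your proof nor the paper's establishes the claim, and your example marks exactly where any fix must do new work.
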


\begin{proof}
This follows from the above proposition that $\mathbb{R} \cap \Sigma_S$ is disconnected containing $\frac{1}{M}$ and $\frac{1}{M+1}$ but missing a real number between them. Furthermore by definition we know that $0 \notin \Sigma_S$. Thus the only way for $\Sigma_S$ to be connected is if there were a path of complex numbers in $\Sigma_S$ connecting $\frac{1}{M+1}$ to the rest of the set. 

Recall the previous estimate that complex roots live in the annulus $\frac{1}{\sqrt{M}+1} \leqslant |z| < 1$. Since the total gap of $S$ is at least $3$ and $1 \in S$ this means that $M$ is at least $4$ and so $\frac{1}{M} < \frac{1}{\sqrt{M}+1}$ which shows that no such path can exist and hence $\Sigma_S$ and $\Sigma_S^1$ are both disconnected.
\end{proof}

Turning now to the question of connectedness we rely on a criterion for $\Sigma_S^1$ to be connected (and locally connected) found in \textbf{[Na]} as follows. Define $\Delta S = \{ a - b : a, b \in S \}$ and construct a graph with vertices $S$ and an edge $(a, b)$ if $(a-b)S \subseteq \Delta S$. Then \textbf{[Na] theorem B} says that if this graph is connected and if $\Sigma_S^1$ contains an annulus $L < |z| < 1$ for any $L \in (0, 1)$ then $\Sigma_S^1$ is connected and locally connected.

\begin{proposition}
If $S$ is a normalized finite set of integers containing $0$ and the total gap of $S$ is $1$ or $2$ then $\Sigma_S^1$ is connected and locally connected
\end{proposition}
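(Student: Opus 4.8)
The plan is to invoke the criterion of \textbf{[Na], Theorem B} as described in the preceding paragraph, so there are exactly two things to check: first, that the graph on vertex set $S$ with an edge $(a,b)$ whenever $(a-b)S \subseteq \Delta S$ is connected, and second, that $\Sigma_S^1$ contains an annulus $L < |z| < 1$ for some $L \in (0,1)$. The second point is immediate from \textbf{Theorem \ref{annulus}} (together with the total-gap-$\leqslant 2$ case handled by a trivial rescaling, or directly from \textbf{Theorem \ref{real-strip}} combined with the same annulus argument): since $S$ has total gap $1$ or $2$, after normalizing we may subdivide so that the relevant estimates go through, giving an annulus $\frac{1}{\sqrt{M}} \leqslant |z| < 1 \subseteq \Sigma_S^1$. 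So the real content is the graph-connectedness claim.

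For the graph connectedness, the key observation is that $0 \in S$, so $0$ is one of the vertices, and I would try to show that every vertex is adjacent to $0$. An edge from $s$ to $0$ exists precisely when $sS \subseteq \Delta S$, i.e. when $s \cdot S \subseteq \{a - b : a,b \in S\}$. Write $S$ in normalized form as a subset of $\{-M, \dots, -1, 0, 1, \dots, M\}$ containing $0$, $1$, $-1$, and $M$, $-M$, with consecutive gaps at most $2$. Then $\Delta S$ is a symmetric set of integers; because the gaps of $S$ are at most $2$, the difference set $\Delta S$ fills out essentially all of $\{-2M, \dots, 2M\}$ — more precisely, I expect $\Delta S \supseteq \{-2M, \dots, 2M\}$ or at least $\{-(2M-1), \dots, 2M-1\}$, since from consecutive elements differing by $1$ or $2$ one can realize every small difference, and chaining gives every difference up to $2M$. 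Granting $\Delta S = \{-2M,\dots,2M\}$ (the interval case), the condition $sS \subseteq \Delta S$ becomes $|s|\cdot M \leqslant 2M$, which holds for every $s \in S$ with $|s| \leqslant 2$. That only handles $s \in \{0, \pm 1, \pm 2\}$, which is not enough, so the single hub $0$ does not suffice for larger elements.

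To reach the larger elements I would instead argue that the graph restricted to $S$ is connected by a chaining argument: show that consecutive elements $s, s'$ of $S$ (which differ by $1$ or $2$) are joined by an edge, i.e. $(s-s')S \subseteq \Delta S$, which since $|s - s'| \in \{1, 2\}$ reduces to $S \subseteq \Delta S$ or $2S \subseteq \Delta S$ — both of which follow once $\Delta S$ is the full interval $\{-2M, \dots, 2M\}$, because $S \subseteq \{-M,\dots,M\}$ and $2S \subseteq \{-2M,\dots,2M\}$. Hence all consecutive elements of $S$ are adjacent, so the graph is a connected path (plus extra edges), and \textbf{[Na]} applies. The main obstacle, and the step I would spend the most care on, is proving the difference-set identity $\Delta S = \{-2M, \dots, 2M\}$ (or whatever slightly weaker statement is actually needed): one must use the hypotheses carefully — $1 \in S$, $M \in S$, symmetry, and total gap $\leqslant 2$ — to see that no residue is missed, paying attention to parity issues when the gaps are exactly $2$ (e.g. $S = \{0, \pm 1, \pm 3, \pm 5, \dots\}$ style sets, where $\Delta S$ might miss some parities and one must check $2S$ and $S$ still land inside it). If in some edge case $\Delta S$ is not the full interval, I would fall back to verifying directly that consecutive elements are still adjacent, which only needs $S \subseteq \Delta S$ and $2S \subseteq \Delta S$, the former being automatic ($s = (s+1) - 1 \in \Delta S$ when $s+1 \in S$, etc.) and the latter following from symmetry and the gap bound with a short case check.
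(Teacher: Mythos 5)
Your overall plan — verify the two hypotheses of \textbf{[Na] Theorem~B} — is correct, and your ``fallback'' argument is essentially the paper's proof, but your primary route contains a false step and your annulus step is misapplied.

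For graph connectedness: your main attempt hinges on $\Delta S = \{-2M,\dots,2M\}$ (or nearly so), and this is simply false. Take $S = \{0,\pm 1,\pm 3\}$, which is normalized, symmetric, contains $0$, and has total gap $2$; then $\pm 5 \notin \Delta S$, since no pair $(a,b) \in S\times S$ has $a-b=5$. So the interval identity you flag as the step to ``spend the most care on'' is not available, and all the parity worry that goes with it is wasted. The fallback is what one actually needs, and it requires no casework at all: since $0 \in S$, we get $1\cdot S = S - \{0\} \subseteq \Delta S$; since $S$ is symmetric, $2s = s - (-s) \in \Delta S$ for every $s\in S$, so $2\cdot S \subseteq \Delta S$. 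Consecutive elements of $S$ differ by $1$ or $2$, hence every consecutive pair is joined by an edge and the graph is connected. Note also that your particular justification of $S \subseteq \Delta S$ via $s = (s+1)-1$ is shaky when the gap is $2$ (since $s+1$ need not be in $S$); the clean derivation uses $s = s - 0$, which is exactly where the hypothesis $0\in S$ earns its keep.

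For the annulus: invoking Theorem~\ref{annulus} does not work as stated. That theorem assumes total gap exactly $1$, so it does not cover the gap-$2$ case, and the ``trivial rescaling'' you suggest would destroy the normalization hypothesis rather than repair anything. Worse, even where it applies — e.g. $S = \{0,\pm1\}$ with $M=1$ — it produces the vacuous annulus $1 \leqslant |z| < 1$. The paper instead observes that $\{0,\pm1\}\subseteq S$ (normalized, symmetric, contains $0$), hence $\Sigma^1_{\{0,\pm1\}}\subseteq\Sigma^1_S$, and appeals to the known nontrivial fact (mentioned in the discussion after Theorem~\ref{annulus}, coming from the literature rather than from that theorem) that $\Sigma_{\{0,\pm1\}}$ contains the annulus $\frac{1}{\sqrt 2} < |z| < 1$.
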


\begin{proof}

The fact that $0 \in S$ implies that $1 \cdot S = S \subseteq \Delta S$. As $S$ is symmetric we have that $2 \cdot S \subseteq \Delta S$ also. It follows that the graph defined above is connected. Moreover we know that $\Sigma^1_S$ always contains an annulus $\frac{1}{\sqrt{2}} < |z| < 1$ (because $\Sigma_{0, \pm 1}$ does) and so we have that $\Sigma^1_S$ is connected and locally connected. 

\end{proof}

Combining these propositions gives a full classification of the connectedness of $\Sigma_S^1$.

\begin{theorem}
If $S$ is a normalized symmetric set of integers containing $0$ then:

	\begin{enumerate}
		\item If the total gap of $S$ is $1$ or $2$ then $\Sigma_S^1$ is connected and locally connected.
		\item Otherwise the total gap of $S$ is at least $3$ and $\Sigma_S^1$ is disconnected.
	\end{enumerate}
\end{theorem}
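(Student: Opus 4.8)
The plan is to obtain the theorem as an immediate consequence of the two preceding propositions, once we record that the total gap of a normalized set of integers is itself a positive integer. Indeed, when $S$ is normalized its minimal positive element is $1$, so relative gaps coincide with ordinary differences of consecutive elements, and these are integers; since $0,1 \in S$ the total gap is at least $1$. Hence the hypotheses split into exactly two mutually exclusive and exhaustive cases: either the total gap is $1$ or $2$, or it is $\geqslant 3$.

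In the first case I would invoke the connectedness proposition above verbatim: $0 \in S$ gives $S = 1\cdot S \subseteq \Delta S$, and symmetry gives $2 \cdot S \subseteq \Delta S$, so the \textbf{[Na]} graph on vertex set $S$ is connected; and since $\{0,\pm 1\}\subseteq S$ we have $\Sigma_{\{0,\pm1\}}^1 \subseteq \Sigma_S^1$, so $\Sigma_S^1$ contains the annulus $\tfrac{1}{\sqrt2} < |z| < 1$. Then \textbf{[Na] Theorem B} delivers that $\Sigma_S^1$ is connected and locally connected.

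In the second case I would invoke the disconnectedness corollary above: when the total gap is $\geqslant 3$ the real section $\Sigma_S^1 \cap \mathbb{R}_+$ contains $\tfrac1M$ and $\tfrac1{M+1}$ but omits a real point strictly between them, and because a gap $\geqslant 3$ forces $M \geqslant 4$ we have $\tfrac1M < \tfrac{1}{\sqrt M + 1}$, so no complex arc of $\Sigma_S^1$ (whose points all satisfy $|z| \geqslant \tfrac{1}{\sqrt M + 1}$) can reconnect $\tfrac1{M+1}$ to the rest of the set; hence $\Sigma_S^1$ is disconnected.

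There is essentially no obstacle remaining: the analytic content — the escape-threshold computation ruling out the root $\tfrac1{M+r}$ in the large-gap regime, and the verification of both \textbf{[Na]} hypotheses (graph connectivity and the presence of a sub-annulus) in the small-gap regime — is already in place. The only point to be careful about is that the two cases are genuinely exhaustive, i.e.\ that a normalized integer set cannot have total gap strictly between $2$ and $3$; this is immediate from the definition of total gap in the normalized integer setting.
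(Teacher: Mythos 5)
Your proposal is correct and follows the paper's route exactly: the theorem is obtained by combining the two preceding propositions, with the only additional observation being that a normalized symmetric set of integers containing $0$ has an integer total gap that is at least $1$, so the cases total gap $\in \{1,2\}$ and total gap $\geqslant 3$ are mutually exclusive and exhaustive. The paper itself records this step with the single line ``Combining these propositions gives a full classification of the connectedness of $\Sigma_S^1$,'' so your filled-in version is faithful to its argument.
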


Notice that the disconnected part of the proof also held for $\Sigma_S$ whereas the connectedness proof required the power series to start at $1$. Below we try to extend this criterion to $\Sigma_S$ by proving that if $M = \max(S)$ is large enough then $\Sigma_S = \Sigma_S^1$.

\begin{proposition}
If $S$ is a symmetric normalized set of integers with total gap at most $2$ and $M = \max{S} \geqslant 40$ then $\Sigma_S^1$ contains the annulus $\frac{1}{\sqrt{\frac{M}{2}} + 1} \leqslant |z| < 1$.
\end{proposition}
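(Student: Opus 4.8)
The plan is to certify each point of the annulus as a root of a power series starting with $1$, using the $2$-step recursion while keeping all partial quotients inside the fixed symmetric window $[-1,1]$. First I would reduce the problem. Since $S=-S$ is a set of reals, $\Sigma_S^1$ is invariant under $z\mapsto -z$ and under $z\mapsto\overline z$ (replace $(p_j)$ by $((-1)^jp_j)$, resp.\ note the coefficients are real), so it suffices to certify each $\lambda=\frac1r e^{i\theta}$ with $1<r\leqslant\sqrt{M/2}+1$ and $\theta\in[0,\pi/2]$. For $\theta=0$ the point $\frac1r$ lies in $[\frac1{M+1},1)$ because $\sqrt{M/2}+1<M+1$, and $[\frac1{M+1},1)\subseteq\Sigma_S^1$ by Theorem \ref{real-strip} (its construction uses $p_0=1$), since the total gap is $\leqslant 2$ and $\pm M\in S$. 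So from now on assume $\theta\in(0,\pi/2]$, so $\lambda\in\mathbb D\setminus\mathbb R$ with $\cos\theta\geqslant 0$, and we may run the $2$-step recursion of Theorem \ref{2-step}.

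Set $p_0=1$ (so $q_0=1$) and $q_{-1}=0$; I claim one can choose $p_1,p_2,\dots\in S$ with $q_j\in[-1,1]$ for all $j\geqslant 0$. Granting this, $(q_j)$ is bounded, so the power series $P(z)=\sum p_jz^j$ converges on $\mathbb D$ and has $\lambda$ as a root by Theorem \ref{2-step}; as $p_0=1$ this gives $\lambda\in\Sigma_S^1$. Inductively, given $q_{j-1},q_j\in[-1,1]$, we need $p_{j+1}\in S$ with $q_{j+1}=p_{j+1}+w\in[-1,1]$, i.e.\ $p_{j+1}\in[-1-w,\,1-w]$, where $w=2r\cos\theta\,q_j-r^2q_{j-1}$. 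The arithmetic heart of the argument is the bound
$$ |w| \leqslant 2r\cos\theta + r^2 \leqslant 2r + r^2 = (r+1)^2 - 1 \leqslant M+1, $$
where, after substituting $r\leqslant\sqrt{M/2}+1$, the last inequality is equivalent to $M^2-40M+16\geqslant 0$, which holds for integers $M\geqslant 40$.

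Now split into three cases. If $|w|\leqslant M-1$ then $[-1-w,1-w]\subseteq[-M,M]$; as this interval has length $2$ and consecutive elements of $S$ differ by at most $2$ (with $\pm M\in S$), it contains a point of $S$, which we take as $p_{j+1}$. If $w>M-1$, take $p_{j+1}=-M\in S$, so $q_{j+1}=w-M$ and $M-1<w\leqslant M+1$ forces $q_{j+1}\in(-1,1]$. Symmetrically, if $w<-(M-1)$ take $p_{j+1}=M\in S$. In every case $q_{j+1}\in[-1,1]$, closing the induction and finishing the proof.

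The step I expect to be the real obstacle is pinning the inner radius down to $\frac1{\sqrt{M/2}+1}$ rather than to the softer value one gets from using only the first case: working with a window centred at $0$ (matching the symmetry of $S$) is exactly what lets the ``overshoot'' cases be absorbed by the extreme coefficients $\pm M$, and the inequality $(r+1)^2\leqslant M+2$ needed to run those two boundary cases is precisely what forces $M\geqslant 40$ (it fails for $M\leqslant 39$). A minor but not-to-be-forgotten point is that the construction lands in $\Sigma_S^1$, not merely $\Sigma_S$, which is why we fixed $p_0=1$ throughout.
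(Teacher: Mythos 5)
Your proof is correct and follows essentially the same approach as the paper's: fix $p_0=1$, reduce by symmetry to the first quadrant, handle the real ray via Theorem~\ref{real-strip}, and then run the $2$-step recursion keeping all $q_j$ in $[-1,1]$ by bounding the carried term $|2r\cos\theta\,q_{j}-r^2q_{j-1}|$ by $(r+1)^2-1\leqslant M+1$, which is where $M\geqslant 40$ enters. You spell out the choice of $p_{j+1}$ (the three-case split and the reliance on $\pm M\in S$) more explicitly than the paper, which merely asserts the choice exists, but the underlying argument and the threshold computation are the same.
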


\begin{proof}
Let $\lambda \in \frac{1}{\sqrt{\frac{M}{2}} + 1} \leqslant |z| < 1$. If $\lambda \in \mathbb{R}$ then $\lambda \in \Sigma_S^1$.

If $\lambda = \frac{1}{r}e^{i\theta}$ is not real then we can use the 2-step. Choose $p_0 = q_0 = 1$. Then we have that:
\begin{align*}
q_1 = p_1 + 2r\cos(\theta).
\end{align*}

We have that $|2r\cos(\theta)| \leqslant 2(\sqrt{M/2} + 1)$. Since $M \geqslant 40$ this is $\leqslant M + 1$ and hence we can pick $p_1$ such that $|q_1| \leqslant 1$.

Suppose now we have $q_n$ and $q_{n+1}$ both in $[-1, 1]$. Then we have that:
\begin{align*}
q_{n+2} = p_{n+2} + 2r\cos(\theta)q_{n+1} - r^2q_n.
\end{align*}

We have that $|2r\cos(\theta)q_{n+1} - r^2q_n| \leqslant 2(\sqrt{M/2} + 1) + (\sqrt{M/2} + 1)^2 = M/2 + 4\sqrt{M/2} + 3$. Since $M \geqslant 40$ this is $\leqslant M+1$ and so we can choose $p_{n+2} \in S$ such that $|q_{n+2}| \leqslant 1$.

\end{proof}

\begin{corollary}
If $S$ is a symmetric normalized set of integers with total gap at most $2$ and $M = \max{S} \geqslant 40$ then $\Sigma_S = \Sigma_S^1$.
\end{corollary}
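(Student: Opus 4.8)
The plan is to prove the nontrivial inclusion $\Sigma_S \subseteq \Sigma_S^1$; the reverse inclusion $\Sigma_S^1 \subseteq \Sigma_S$ is immediate since a power series with constant term $1$ has nonzero constant term. So I would fix $z \in \Sigma_S$, realized by $P(w) = \sum_{j\geq 0} p_j w^j$ with $p_j \in S$, $p_0 \neq 0$, and $P(z) = 0$; note $z \neq 0$ because $0 \notin \Sigma_S$. Since $S$ is symmetric I may replace $P$ by $\operatorname{sgn}(p_0)\, P$, whose coefficients still lie in $S$, and assume the integer $p_0$ is $\geq 1$. If $p_0 = 1$ then $z \in \Sigma_S^1$ and there is nothing to prove, so the real content is the case $p_0 \geq 2$.

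First I would dispose of real roots. If $z \in \mathbb{R}$ then Lemma~\ref{outerannulus} (with $M = \max(S)$, using that $S$ is normalized) gives $\tfrac{1}{M+1} \leq |z| < 1$, so $z$ lies in $[\tfrac{1}{M+1}, 1)$ or in $(-1, -\tfrac{1}{M+1}]$. Since $M, -M \in S$ and the total gap of $S$ is at most $2$, the construction in the proof of Theorem~\ref{real-strip} produces a power series with constant term $1$ and coefficients in $S$ vanishing at any prescribed point of $[\tfrac{1}{M+1},1)$, and replacing each $p_j$ by $(-1)^j p_j$ covers the negative interval; hence $z \in \Sigma_S^1$.

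The main case is a complex root $z = \tfrac{1}{r}e^{i\theta}$ with $r = 1/|z| > 1$ coming from a series with $p_0 \geq 2$. Here I would apply the $2$-step recursion (Theorem~\ref{2-step}) to $P$ and $z$: since $z$ is a root of this particular series, the corollary on the $2$-step escape threshold forces $|q_j| \leq E_2(z,S) = \tfrac{M}{(r-1)^2}$ for every $j$. Taking $j = 0$ and using $q_0 = p_0$,
\[
2 \;\leq\; |q_0| \;\leq\; \frac{M}{(r-1)^2}, \qquad\text{hence}\qquad |z| \;=\; \frac{1}{r} \;\geq\; \frac{1}{1+\sqrt{M/2}}.
\]
Thus $z$ lies in the annulus $\tfrac{1}{\sqrt{M/2}+1} \leq |w| < 1$, which by the preceding proposition (this is the only place $M \geq 40$ enters) is contained in $\Sigma_S^1$. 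Combining the three cases gives $\Sigma_S \subseteq \Sigma_S^1$.

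I expect the only genuine subtlety to be the complex case, and more precisely the realization that the shell $\tfrac{1}{\sqrt{M}+1} \leq |w| < \tfrac{1}{\sqrt{M/2}+1}$ — exactly the part of $\Sigma_S$ not already covered by the annulus of the preceding proposition — cannot contain a root realized by a series with $|p_0| \geq 2$: such a root would have $(r-1)^2 > \tfrac{M}{2}$, incompatible with $|p_0| \leq \tfrac{M}{(r-1)^2}$. Hence every root in that shell is realized with $|p_0| = 1$ and so already lies in $\Sigma_S^1$. Everything else is routine bookkeeping with the recursions and the real-interval theorem already established above.
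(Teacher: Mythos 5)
Your proof is correct and takes essentially the same route as the paper: handle $p_0 = \pm 1$ trivially, real roots via the total-gap/real-interval theorem, and complex roots with $|p_0| \geqslant 2$ by showing they must lie in the annulus $\frac{1}{\sqrt{M/2}+1} \leqslant |z| < 1$, which the preceding proposition (using $M \geqslant 40$) places inside $\Sigma_S^1$. The only cosmetic difference is that you derive the depth bound directly from $|q_0| = |p_0| \leqslant E_2(\lambda, S)$ in the $2$-step escape corollary, whereas the paper cites the renormalized estimate $|z| \geqslant \frac{1}{\sqrt{M/p_0}+1}$; these are the same fact.
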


\begin{proof}
Clearly $\Sigma_S^1 \subseteq \Sigma_S$ so it suffices to prove that $\Sigma_S \subseteq \Sigma_S^1$. Suppose $\lambda \in \Sigma_S$ is the root of a power series $\sum_{j=0}^\infty p_jz^j$ with $p_j \in S$. If $p_0 = \pm 1$ then $\lambda \in \Sigma_S^1$ and we are done. Also since the total gap of $S$ is $\leqslant 2$ we know that $\mathbb{R} \cap \Sigma_S = \mathbb{R} \cap \Sigma_S^1 = (-1, \frac{-1}{M+1}] \cup [\frac{1}{M+1}, 1)$ and so if $\lambda \in \mathbb{R}$ then $\lambda \in \Sigma_S^1$ as well. Otherwise we know that $|p_0| \geqslant 2$ and that it is imaginary. We know that any complex root of such a power series lives in the annulus $\frac{1}{\sqrt{M/p_0} + 1} \leqslant |z| < 1$ and hence in $\frac{1}{\sqrt{M/2} + 1} \leqslant |z| < 1$. Then by the previous proposition we have $\lambda \in \Sigma_S^1$ as wanted.
\end{proof}

Combining this corollary with the classification of the connectedness of $\Sigma_S^1$ we obtain \textbf{Theorem \ref{conn-crit}} as wanted.

Notice that there are only finitely many normalized symmetric sets of integers $S$ containing $0$ with $\max(S) < 40$ and so this gives theorem works for all but finitely many such $S$.

To improve the argument above it would help to better understand how deep complex roots of power series with real coefficients can get into $\mathbb{D}$. The estimate that the complex roots of $\sum_{j=0}^\infty p_jz^j$ must live in $\frac{1}{\sqrt{M/p_0} + 1} \leqslant |z| < 1$ is not optimal when $p_j$ all live in a finite set. If this bound could be improved the above argument would generalize for smaller $M$ values.

It would also be ideal to have a better understanding of how (and if) $\Sigma_S$ and $\Sigma_S^1$ differ.

Lastly, it would be of great interest to have a better understanding of the role that $0$ being in $S$ plays in the connectedness of $\Sigma_S$.

\section{Rigidity}

Finally we turn to the question of what information can be derived from knowing that $\Sigma_S = \Sigma_T$. For this discussion we mostly restrict to considering normalized symmetric sets of integers. This is more restrictive than it appears at first glance. For example, if $S = \{ 2, 3, 4\}$ then $S$ is a set of integers but is not normalized. We can normalize $S$ by dividing by $2$ to get $S' = \{ 1, 3/2, 2 \}$ however $S'$ is no longer a set of integers. 

\begin{proposition}
If $S$ and $T$ are finite non-empty normalized sets of real numbers and if $\Sigma_S = \Sigma_T$ then $\max_{s \in S} s = \max_{t \in T} t$.
\end{proposition}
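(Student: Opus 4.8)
The plan is to extract $\max(S)$ directly from the geometry of $\Sigma_S$ by locating the deepest real point of the set. By Lemma \ref{outerannulus}, $\Sigma_S$ is contained in the annulus $\frac{1}{M+1} \leqslant |z| < 1$ where $M = \max_{s,t \in S, t \neq 0} \frac{|s|}{|t|}$; since $S$ is normalized the minimal positive element is $1$, so this exponent is exactly $M_S := \max_{s \in S}|s|$. Moreover, part (2) of that lemma (or the explicit example $1 - M_S z - M_S z^2 - \cdots$, whose root is $\frac{1}{M_S+1}$) shows the bound is attained: $\frac{1}{M_S+1} \in \Sigma_S$. Hence
\begin{align*}
\frac{1}{M_S + 1} = \min\{\, |z| : z \in \Sigma_S \,\} = \min\{\, |z| : z \in \Sigma_T \,\} = \frac{1}{M_T + 1},
\end{align*}
where the middle equality uses $\Sigma_S = \Sigma_T$ and compactness of each set. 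Solving gives $M_S = M_T$. Finally I must upgrade $\max_{s\in S}|s| = \max_{t\in T}|t|$ to $\max_{s\in S} s = \max_{t\in T} t$: but a normalized set contains $1 > 0$, so its maximum in absolute value is positive and therefore equals its maximum, i.e. $\max_{s\in S}s = \max_{s\in S}|s|$, and likewise for $T$. This closes the argument.

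The main thing to verify carefully is that the minimum modulus over $\Sigma_S$ is genuinely realized and equals $\frac{1}{M_S+1}$ — i.e. that no element of $\Sigma_S$ has modulus strictly below $\frac{1}{M_S+1}$, and that this value is attained. The first is immediate from Lemma \ref{outerannulus}(1) once we note that for a normalized $S$ the ratio $M$ appearing there coincides with $\max_{s\in S}|s|$ (every nonzero denominator $|t|$ is $\geqslant 1$, with equality achieved since $1 \in S$, and the numerator is maximized at $|s| = M_S$). The second is exactly Lemma \ref{outerannulus}(2): taking the power series $1 - M_S z - M_S z^2 - \cdots = \frac{1 - (M_S+1)z}{1-z}$ (valid since $\pm M_S, 1 \in S$ when $S$ is normalized, so the minimum and maximum nonzero elements have a sign difference in the symmetric-enough case — or, more simply, observe directly that this series has coefficients in $S$ only when $-M_S \in S$; if $S$ is not symmetric one instead uses the scaled series $-1 + M_S z + M_S z^2 + \cdots$ whose root is still $\frac{1}{M_S+1}$). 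Either way the infimum is attained.

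I expect the only genuine subtlety to be bookkeeping around whether $-M_S \in S$: the clean example realizing the extreme root uses both signs of the largest coefficient, but a normalized set need not be symmetric. The fix is that the real root $\frac{1}{M+1}$ of the bound in Lemma \ref{outerannulus} is attained whenever the minimum and maximum (in absolute value) nonzero elements differ in sign; if $S$ has all nonzero elements of one sign one can still realize the modulus bound with a complex conjugate pair of roots of a suitable polynomial, or argue via a limiting sequence of polynomial roots. Since the proposition only asserts equality of maxima, and $\Sigma_S$ is by definition the closure of polynomial roots, passing to the closure removes this annoyance entirely: $\frac{1}{M_S+1}$ lies in $\overline{\Sigma_S} = \Sigma_S$ regardless. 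Everything else is a one-line computation.
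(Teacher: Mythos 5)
Your core idea---read $\max_{s\in S}|s|$ off the inner radius of the annulus from Lemma~\ref{outerannulus} and note that normalization fixes the denominator at $1$---is precisely the paper's approach, and for symmetric $S$ and $T$ (which the paper's own proof quietly assumes, beginning ``Since $S$ and $T$ are symmetric...'') your argument is sound and more carefully spelled out than the original. The trouble is in the two places where you try to dispense with symmetry; both patches are false, not merely unproved.

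First, the upgrade from $\max_{s\in S}|s|=\max_{t\in T}|t|$ to $\max_{s\in S}s=\max_{t\in T}t$ via ``a normalized set contains $1>0$, so its maximum in absolute value is positive'' is a non-sequitur: $S=\{1,-3\}$ is normalized, $\max_{s\in S}|s|=3$, yet $\max_{s\in S}s=1$. Having one positive element says nothing about the sign of the element of largest modulus. Second, the claim that $\frac{1}{M+1}$ is always attained in $\Sigma_S$, even when every nonzero element of $S$ shares a sign, ``with a complex conjugate pair of roots'' or ``via a limiting sequence of polynomial roots,'' is wrong. If $S\subseteq\{0\}\cup[1,M]$, a power series $\sum p_jz^j$ with $p_j\in S$ and $p_0\geqslant 1$ has no positive real root; a negative real root $-x$ forces $\sum_{j\text{ odd}}p_jx^j\geqslant p_0\geqslant 1$, hence $\frac{Mx}{1-x^2}\geqslant 1$ and $x\geqslant\frac{2}{M+\sqrt{M^2+4}}>\frac{1}{M+1}$; and complex roots obey $|z|\geqslant\frac{1}{\sqrt{M}+1}>\frac{1}{M+1}$. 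So the infimum of $|z|$ over $\Sigma_S$ strictly exceeds $\frac{1}{M+1}$, and passing to the closure cannot rescue this since the bound holds for every power series with coefficients in $S$, not just polynomials. The repair is the one the paper makes implicitly: impose symmetry, which simultaneously puts $-M$ in $S$ (so $1-Mz-Mz^2-\cdots$ realizes the bound) and makes $\max_{s\in S}|s|=\max_{s\in S}s$ automatic.
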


\begin{proof}
Since $S$ and $T$ are symmetric we know that the annulus described in \textbf{Lemma \ref{outerannulus}} is best possible meaning that the size of the annulus containing $S$ is equal to the size of the annulus containing $T$. The annuli is proportional to the min (positive non-zero) and max elements of $S$ and $T$ respectively and so the ratios must agree for $\Sigma_S$ to be equal to $\Sigma_T$. Since both sets are normalized the minimum is $1$ and so the maximum elements must agree.
\end{proof}

Because the maximum elements are equal we can denote the shared max by $M$.

Next we prove one of the major results of the paper, that if two root sets are equal then their early terms must agree, thus the smaller elements of the set of coefficients are rigid. 

\begin{theorem}
\label{weakrigidity}
Let $S$ and $T$ be finite non-empty symmetric normalized sets of integers. If $\Sigma_S = \Sigma_T$ and $k$ is an integer in $(1, 2\sqrt{\max(S)}+1)$ then $k\in S$ if and only if $k \in T$.
\end{theorem}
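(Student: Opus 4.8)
The plan is to exploit the sharp endpoint information coming from Theorem \ref{rho-out} together with the structure of the extremal power series identified in the lemma preceding it. The key observation is that the quantity $k = \max\{s \in S \mid 1 \leqslant s < 2\sqrt{M}+1\}$ is \emph{recoverable} from $\Sigma_S$ alone: by Theorem \ref{rho-out}, the value $\frac{1}{\sqrt{M+k}}$ is an upper bound for $\rho_{out}(S)$, and by constructing the power series $f_k(z) = 1 - kz + \sum_{j\geqslant 2} Mz^j$ (which has complex roots of modulus exactly $\frac{1}{\sqrt{M+k}}$ whenever $1 < k < 2\sqrt M + 1$), this bound is actually attained. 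So $\rho_{out}(S) = \frac{1}{\sqrt{M+k}}$, where $k$ is the largest element of $S$ strictly below $2\sqrt M + 1$. Since $\Sigma_S = \Sigma_T$ forces $M = \max(S) = \max(T)$ (by the proposition just proved) and also $\rho_{out}(S) = \rho_{out}(T)$, we conclude that the largest element of $S$ below $2\sqrt M + 1$ equals the largest element of $T$ below $2\sqrt M + 1$. This handles the \emph{top} integer in the range but not all of them.

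To get \emph{every} integer in $(1, 2\sqrt M + 1)$, I would iterate this idea on a ``truncated'' version of the set. Concretely, suppose for contradiction that $k_0$ is the largest integer in $(1, 2\sqrt M + 1)$ with $k_0 \in S$ but $k_0 \notin T$ (or vice versa); by the previous paragraph $k_0$ is not the maximal element of $S$ below $2\sqrt M + 1$, and by maximality of $k_0$ every integer strictly between $k_0$ and $2\sqrt M+1$ is in $S$ iff it is in $T$. The strategy is to produce a point of $\Sigma_S$ (resp. $\Sigma_T$) whose presence detects membership of $k_0$. The natural candidate is a complex root of a power series of the form $1 - k_0 z + \sum_{j \geqslant 2} c_j z^j$ with $c_j \in S$ chosen as large as possible; the lemma before Theorem \ref{rho-out} shows that when the leading coefficients are pinned down, the tail is essentially forced, so the modulus of such a root is determined by $k_0$ and $M$. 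More precisely, $f_{k_0}(z) = \frac{1 - (k_0+1)z + (M+k_0)z^2}{1-z}$ has roots of modulus $\frac{1}{\sqrt{M + k_0}}$, and for $k_0$ in the stated range these are complex; I would argue that this exact modulus (at the appropriate angle $\theta$ with $\cos\theta = \frac{k_0+1}{2\sqrt{M+k_0}}$) can only be achieved from below by series starting $1 - k_0 z$, using the angled escape threshold $E(S,\lambda) = \frac{M}{r^2 - 2r\cos\theta - 1}$ from Section 4: at the root of $f_{k_0}$ this threshold equals exactly $1$, so $q_0 = p_0 = 1$ forces $|q_j| = 1$ for all $j$, which in turn (as in the proof of part (1) of the lemma) forces $p_1 = -k_0$ and $p_j = M$ for $j \geqslant 2$. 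Hence this particular point lies in $\Sigma_S$ if and only if $k_0 \in S$, contradicting $\Sigma_S = \Sigma_T$.

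The main obstacle I anticipate is the boundary/rigidity analysis in the second paragraph: showing that the specific extremal point $\lambda_{k_0} = \frac{1}{\sqrt{M+k_0}}e^{i\theta}$ is \emph{not} a limit of roots of series with $p_1 \neq -k_0$, i.e. that it sits exactly on the ``frontier'' of $\Sigma_S$ in a way visible only when $k_0$ is an available coefficient. This is the analogue, for an interior value $k_0$ rather than for the extreme value realizing $\rho_{out}$, of the delicate argument in the lemma preceding Theorem \ref{rho-out}, and it requires knowing that for $r$ slightly larger than $\sqrt{M+k_0}$ the angled escape threshold drops below $1$ at angle $\theta$, so no series at all (regardless of first coefficient) has a root there — combined with the fact that approaching $\lambda_{k_0}$ along rays forces, via continuity of $q_j(\lambda)$ and $E(S,\lambda)$, the coefficients to stabilize to $1, -k_0, M, M, \dots$. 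One must also check the edge cases: that $k_0 \pm 1$ being present or absent does not interfere (here the total-gap hypotheses and symmetry are used), and that the argument degenerates correctly if $k_0$ happens to be close to $2\sqrt M + 1$ (where $\theta \to 0$ and the root becomes nearly real). Once the extremal point is pinned to membership of $k_0$, equality $\Sigma_S = \Sigma_T$ immediately gives $k_0 \in S \iff k_0 \in T$, completing the induction downward through the range $(1, 2\sqrt M + 1)$.
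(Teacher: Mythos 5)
Your overall strategy — identify the complex roots $\lambda_k$ of $1-(k+1)z+(M+k)z^2$ (equivalently of $1-kz+M\sum_{j\geq 2}z^j$) and show that $\lambda_k\in\Sigma_S$ if and only if $k\in S$ via the 2-step recursion — is the same as the paper's, and your first paragraph's reduction to this claim is on the right track. However, the central computation in your second paragraph is wrong: at $\lambda_k = \tfrac{1}{r}e^{i\theta}$ with $r^2 = M+k$ and $2r\cos\theta = k+1$, the angled escape threshold is
\[
E(S,\lambda_k)=\frac{M}{r^2-2r\cos\theta-1}=\frac{M}{(M+k)-(k+1)-1}=\frac{M}{M-2},
\]
which is strictly greater than $1$, not equal to $1$. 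So the ``saturation'' argument you import from part (1) of the lemma preceding Theorem \ref{rho-out} does not apply: the fact that $q_0=1$ does not pin all $|q_j|$ to $1$ simply because $q_0$ hits a threshold. To salvage the argument you would have to invoke the \emph{integrality} of the $q_j$ (the recursion has integer coefficients $k+1$ and $M+k$, and the $p_j$ are integers, so each $q_j\in\mathbb{Z}$), so that $|q_j|\leq \tfrac{M}{M-2}<2$ forces $|q_j|\leq 1$. You never mention integrality, yet the hypothesis that $S$ and $T$ are sets of \emph{integers} is exactly what the paper's proof is built around. Even with that fix, $\tfrac{M}{M-2}<2$ requires $M>4$, so the small cases $M\in\{2,3,4\}$ need separate treatment; the paper avoids this by a finer argument on the difference sequence $h_j=q_{j+1}-q_j$ and the maximal value $A=\max_j|q_j|$, which works uniformly in $M$.

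Two smaller issues: (i) in your first paragraph you assert $\rho_{out}(S)=\tfrac{1}{\sqrt{M+k}}$, but Theorem \ref{rho-out} only gives $\rho_{out}(S)\leq\tfrac{1}{\sqrt{M+k}}$ (exhibiting a root at that modulus does not rule out deeper non-real roots), so ``this bound is actually attained'' is unjustified — though this does not matter since you abandon that line; (ii) the downward induction in the second paragraph is unnecessary, as the argument, once correct, is direct for each $k$ separately (and in the case $k>M$, which occurs when $M<4$, the claim is vacuous). The ``main obstacle'' you anticipate — whether $\lambda_{k}$ could lie in $\Sigma_S$ as a limit without a genuine power series root — is actually not a problem: the angled-escape-threshold lemma (whose proof already handles the closure of the polynomial root set) gives a clean two-sided criterion, and once you correct the threshold value and use integrality the coefficients are forced.
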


\begin{proof}
Let $M = \max(S)$. Since $\Sigma_S = \Sigma_T$ we also have that $M = \max(T)$. Consider the polynomial $f(z) = 1 - (k+1)z + (M+k)z^2$. Note that the assumption $1 < k < 2 \sqrt{M}+1$ gives that the roots of $f$ are not real and hence allows us to use the 2-step recursion to say that its roots belong to $\Sigma_S$ if and only if we have a sequence $(p_j)$ in $S$ so that there is a uniformly bounded solution $(q_j)$ to the recursion:
\begin{equation}
\label{rec2}
p_0 = q_0, \quad p_1 = q_1 - (k+1)q_0, \quad p_{j+2} = q_{j+2} - (k+1)q_j + (M+k)q_j.
\end{equation}

Our claim is that such a solutions exists if and only if $k \in S$ which would immediately imply the theorem.

To prove the sufficiency we consider the power series:
\begin{align*}
1 - kz + Mz^2 + Mz^3 + ... &= 1 - kz + M\frac{z^2}{1-z}.
\end{align*}

Setting it equal to zero we see that:
\begin{align*}
& 0 = 1 - kz + M\frac{z^2}{1-z} \\
\iff & 0 = 1(1-z) - k(1-z) + Mz^2 \\
\iff & 0 = 1 - (k+1)z + (M+k)z^2.
\end{align*}

The above calculation only works for $|z| < 1$ but this is not an issue as the roots of $f$ will satisfy $|z| = \frac{1}{\sqrt{M+k}}$ and hence are both $< 1$. Since the power series only used $1, -k,$ and $M$ we see that the roots of $f$ are in $\Sigma_S$ when $k \in S$.

\bigskip
\bigskip

For the necessity, suppose that we have some sequence $(p_j)$ with integer values in $\{ 0, \pm 1, \pm 2, ..., \pm M \}$ such that the corresponding sequence $(q_j)$ is uniformly bounded and satisfies \eqref{rec2}. First note that each $q_j$ must be an integer. Define a sequence $(h_j)$ by $h_0 = q_0$ and $h_{j+1} = q_{j+1} - q_j$. Since $(q_j)$ is a uniformly bounded sequence of integers so is $(h_j)$. Moreover we have that $q_j = \sum_{i=0}^j h_i$. Let $A = \max_{j} |q_j|$ be achieved at index $J$. Possibly multiplying by $-1$ we can assume $q_J = A$. Consider the recursion at the $J+2$ step:
\begin{align*}
p_{J+2} = q_{J+2} - (k+1)q_{J+1} + (M+k)q_J.
\end{align*}

Recall that $|p_{J+2}| \leq M$. On the other hand the right hand side has $q_J = A$ (which presumably is a large integer) times $M+k$. We claim that this is possible only if $A$ is 1. To see this expand the $q_j$ terms as sums of $h_i$ to get:
\begin{align*}
p_{J+2} &= \sum_{j=0}^{J+2} h_j - (k+1) \sum_{j=0}^{J+1} h_j + (M+k) \sum_{j=0}^J h_j \\
&= h_{J+2} - kh_{J+1} + Mq_J.
\end{align*}

Now notice that the assumption that $q_J$ is maximal implies that $h_{J+1} \leqslant 0$ as otherwise we would have $q_{J+1} = q_J + h_{J+1}$ is larger than $q_J$. Assuming the sequence $h_j$ is not eventually always zero, if $h_{J+1} = 0$ we can reindex until it is $< 0$ and hence $\leqslant -1$. On the other hand, we have that $|q_{J+2}| = |h_{J+2} + h_{J+1} + q_J| \leqslant A$ and hence that $h_{J+2} \geqslant 1-2A$. Thus
\begin{align*}
p_{J+2} & \geqslant 1-2A + k + MA\\
&\geqslant k + 1 + (M-2)A\\
&\geqslant 1 + k + M - 2 \\
& > M.
\end{align*}

But we know that $|p_{J+2}| \leq M$ (because this is true for every $p_j$) and so we have a contradiction. Thus once $q_J = A$ achieves its maximum value we have $h_{J+1} = h_{J+2} = ... = 0$. But then \eqref{rec2} simplifies to:
\begin{align*}
p_{J+2} &= MA
\end{align*}

which is possible if and only if $A \in \{0, \pm 1 \}$. We already assumed $A \geqslant 0$ so it can not be $-1$. On the other hand it also can not be $0$ because $q_0 = p_0$ and $p_0$ by definition can not be $0$. Thus $A = 1$ and hence $(q_j)_{j=1}^\infty$ is a constant sequence of $1's$. But by tracing the recursion, the only way to have $q_0 = 1$ is for $p_0 = 1$. Then for $q_1 = 1$ we see $p_2 = -k$, and afterwards we get $p_j = M$. Thus $1, -k, M$ were all elements of $S$ and so $k \in S$ as wanted.

\end{proof}

Lastly we show that coefficient sets have a sort of quasi-rigidity, namely that if an element $k$ is in a set of coefficients then for another set of coefficients to give the same roots, it must contain either $k$, $k-1$, or $k+1$. Of course for small elements $k$ this is weaker than the theorem above. The strength of the theorem is that is holds for all elements.

\begin{theorem}
\label{quasirigidity}
If $S$ and $T$ are finite non-empty normalized sets of integers and if $\Sigma_S = \Sigma_T$ then $k \in S$ implies that either $k$, $k-1$, or $k+1$ is in $T$.
\end{theorem}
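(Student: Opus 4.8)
The plan is to exploit the hypothesis $k\in S$ to write down an explicit root of a power series over $S$, and then to analyse the ways that same root can be produced over $T$ via the $1$-step recursion, using the integrality of the coefficients to trap an element of $T$ within distance $1$ of $k$.

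First I would clear away the small values of $k$: if $k\le 2$ then one of $k-1,k,k+1$ equals $1\in T$, so assume $k\ge 3$. Now observe that the power series $\frac{1+kz}{1-z^2}=1+kz+z^2+kz^3+z^4+\cdots$ has every coefficient in $\{1,k\}\subseteq S$ and vanishes at $z=-\tfrac1k\in\mathbb{D}$; hence $-\tfrac1k\in\Sigma_S=\Sigma_T$. Applying Theorem \ref{1-step} with $r=-k$ (so $|r|=k>1$) produces a sequence $(p_j)$ in $T$ with $p_0\neq0$ for which the sequence given by $q_0=p_0$ and $q_{j+1}=p_{j+1}-kq_j$ is uniformly bounded. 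Since $T\subseteq\mathbb{Z}$ and $k\in\mathbb{Z}$, every $q_j$ is an integer, so $(q_j)$ takes only finitely many values.

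Next I would run the bookkeeping from the necessity half of Theorem \ref{weakrigidity}: set $h_0=q_0$ and $h_{j+1}=q_{j+1}-q_j$, so that $q_j=\sum_{i=0}^j h_i$ is a uniformly bounded partial-sum sequence and the recursion rearranges to $p_{j+1}=h_{j+1}+(k+1)q_j$. Let $A=\max_j|q_j|$ be attained at an index $J$; extremality of $|q_J|$ forces $h_{J+1}$ to have sign opposite to $q_J$, and combining this with the a priori growth estimate $|q_{j+1}|\ge k|q_j|-\max_{t\in T}|t|$ from the recursion (which already yields $A\le \max_{t\in T}|t|/(k-1)$) and the bound $|p_{J+1}|\le\max_{t\in T}|t|$ should pin down $A=1$; to push this through one also needs that $\max_{t\in T}|t|$ is not too large compared with $k$, which I would extract by comparing the depth of the deepest point of the common set $\Sigma_S=\Sigma_T$ as computed from $S$ and from $T$ through Lemma \ref{outerannulus}. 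Once $A=1$ the orbit $(q_j)$ lives in $\{-1,0,1\}$ with $q_0=p_0\in\{\pm1\}$, and reading off $p_1=q_1+kq_0$ puts $p_1$ within $1$ of $\pm k$.

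The main obstacle is the interplay of three things: bounding $A$, bounding $\max_{t\in T}|t|$ relative to $k$, and handling the sign of $p_0$. Unlike the symmetric setting of Theorem \ref{weakrigidity}, concluding $-k\in T$ is not automatically as good as concluding $k\in T$. To finish I would trace whether $(q_j)$ ever attains the value $+1$: if so, taking $J$ at such an index gives $p_{J+1}\in\{k-1,k,k+1\}\subseteq T$ directly, whereas if $(q_j)$ stays inside $\{-1,0\}$ one reads off that $-k$ or $-(k+1)$ lies in $T$, and then the auxiliary power series $\frac{1-mz}{1-z^2}$ for $m\in\{k,k+1\}$ (whose coefficients $1,-m$ lie in $T$) has the root $\tfrac1m\in\Sigma_T=\Sigma_S$, so a further pass of the argument is applied to $S$. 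Organising this sign bookkeeping so that it genuinely returns information about $T$, together with the size control on $A$, is where essentially all the work of the proof is concentrated.
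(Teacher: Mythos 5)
The central gap is in your choice of root. You exhibit $-\tfrac1k \in \Sigma_S$ via coefficients in $\{1,k\}$ and then run the $1$-step recursion over $T$ with $r=-k$; the escape threshold at that root is $\tfrac{\max(T)}{k-1}$, and since $\max(T)=\max(S)=M$ may vastly exceed $k$, this threshold is typically nowhere near $1$, so nothing forces $A=1$ (your own a priori bound $A\le M/(k-1)$ is all you get). Your proposed rescue---showing $\max_{t\in T}|t|$ is comparable to $k$ via Lemma~\ref{outerannulus}---cannot succeed, since that lemma only pins down $\max(T)=M$, which is independent of $k$. The root $-\tfrac1k$ is simply not deep enough in the disc for the escape-threshold machinery to constrain the recursion.

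What the paper does instead is engineer a root whose escape threshold is barely above $1$. It defines $\sigma_k\in(0,1)$ by $(M+\sigma_k)\sigma_k=k$ and works with $\tfrac{1}{M+\sigma_k}$, which lies just inside the inner boundary $\tfrac{1}{M+1}$ of the root annulus. One checks that $\tfrac{1}{M+\sigma_k}\in\Sigma_S$ exactly when one of $k-1,k,k+1$ lies in $S$ (concretely the $q$-sequence $1,\sigma_k,0,0,\dots$ works when $k\in S$, and a periodic variant with $q_2=\pm1$ when $k\pm1\in S$). The key payoff is that the escape threshold $\tfrac{M}{M+\sigma_k-1}$ is below $\tfrac54$ once $M\ge5$, and this tiny margin rigidifies the recursion over $T$: one is forced to take $q_0=1$ (any $|p_0|\ge2$ already blows up), then $p_1=-M$ so $q_1=\sigma_k$ (otherwise $|q_1|\ge1+\sigma_k$ exceeds the threshold, since $(1+\sigma_k)(M+\sigma_k-1)>M$ is equivalent to $k>1$), and then $q_2=p_2+k$ must lie in $\{-1,0,1\}$, reading off that one of $k-1,k,k+1$ is in $T$. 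So the idea you are missing is to choose the test root close to the inner boundary of the annulus, precisely so that the escape threshold is so close to $1$ that the recursion is essentially forced from its first three steps.
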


\begin{proof}
Let $M = \max(S) = \max(T)$. Fix $k \in S$ with $1 < k < M$. To prove this theorem we introduce a number $\sigma_k$ and claim that $\frac{1}{M+\sigma_k} \in \Sigma_S \iff $ at least one of $k$, $k-1$, or $k+1$ belongs to $S$. Let $\sigma_k$ be a number in $(0, 1)$ such that $(M+\sigma_k)\sigma_k = k$. Since the left hand side ranges from $0$ to $M+1$ as $\sigma_k$ ranges from $0$ to $1$ there must be a choice $\sigma_k$ satisfying the equality. Consider the recursion:
\begin{align*}
p_0 = q_0 \quad p_{j+1} = q_{j+1} - (M+\sigma_k) q_j.
\end{align*}

We first show that there is a choice of $p_j$ such that the $q_j$ are uniformly bounded provided $k, k-1,$ or $k+1$ is in $S$. Taking $p_0 = 1$ we see $q_0 = 1$ and so $p_1 = q_1 - M - \sigma_k$. If $p_1 = -M$ then $q_1 = \sigma_k$ and so the next recursion is $p_2 = q_2 - k$. If we assume $k \in S$ then taking $p_2 = -k$ gives $q_2 = 0$ from which point we can just take $p_j = 0$ to get $q_j = 0$ and are done (we have constructed a polynomial). If instead $k+1$ or $k-1$ is in $S$ then we can pick $p_2$ so that $q_2 = \pm 1$. If $q_2 = 1$ then we have gone in a loop and can just pick the same choices over and over. If $q_2 = -1$ then we pick the negative choices of those made to get a future $q_j = 1$ at which point we repeat. Thus in all cases we have a uniformly bounded solution $(q_j)$ and so are done.

On the other hand suppose now that none of $k, k-1, k+1$ are in $S$. It should be noted that $M \geqslant 5$ as otherwise it is not possible to not have 3 consecutive terms missing from $S$. Similarly we can also assume $k \geqslant 3$ as we know $1 \in S$. The escape threshold for this recursion is $\frac{M}{M+\sigma_k-1} < \frac{5}{4}$. If we pick $|p_0| \neq 1$ then it must be $\geqslant 2$. Thus we get $|q_2| \geq M + 2\sigma_k > 5$ and which point the $|q_j| \rightarrow \infty$.

If we pick $p_0 = 1$ then we get $p_1 = q_1 - M - \sigma_k$. If $p_1 \neq -M$ then $|q_1| \geqslant 1+\sigma_k$ which implies $|q_2| \geqslant M+k+\sigma_k^2 - M = k + \sigma_k^2 > 3$ at which point $|q_j| \rightarrow \infty$.

Finally suppose now that $p_0 = 1, p_1 = -M$. In this case the recursion says $p_2 = q_2 - k$. If $|p_2| \notin \{k, k-1, k+1 \}$ then $|q_2| \geqslant 2$ and so $|q_j| \rightarrow \infty$. Thus the only way to have a uniformly bounded sequence $(q_j)$ is to pick one of $k, k-1,$ or $k+1$ which completes the proof.

\end{proof}

\begin{corollary}
Let $S, T$ be finite non-empty normalized symmetric sets of integers such that $\Sigma_S = \Sigma_T$. If $a, b \in S$ and $|a-b| > 3$, and there is no $k \in S$ such that $a < k < b$ then $a, b \in T$ and there is no $k \in T$ with $a < k < b$.
\end{corollary}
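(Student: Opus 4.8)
The plan is to distill from the proof of Theorem~\ref{quasirigidity} a two-sided combinatorial equivalence and then apply it symmetrically around the gap $(a,b)$. First I would normalise the set-up: reordering, assume $a<b$; since $1\in S$ the gap $(a,b)$ cannot contain $1$, so $a\ge 1$ or $b\le -1$, and replacing $(a,b)$ by the gap $(-b,-a)$ when necessary (legitimate since $S=-S$, $T=-T$, and $a,b\in T$ iff $-b,-a\in T$) we may assume $1\le a<b\le M$, where $M=\max S=\max T$ (equal by the proposition preceding Theorem~\ref{weakrigidity}). In particular $M\ge b\ge a+4\ge 5$, so every numerical hypothesis occurring in the proof of Theorem~\ref{quasirigidity} is met.

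The heart of the argument is the observation that the proof of Theorem~\ref{quasirigidity} actually establishes, for every integer $k$ with $1<k<M$, the full equivalence
\[
\tfrac{1}{M+\sigma_k}\in\Sigma_S \iff \{k-1,\,k,\,k+1\}\cap S\neq\emptyset,
\]
where $\sigma_k\in(0,1)$ solves $(M+\sigma_k)\sigma_k=k$: the ``construction'' half of that proof works whenever one of $k-1,k,k+1$ lies in $S$, and the ``blow-up'' half only uses $M\ge 5$ and $k\ge 3$. The same holds verbatim for $T$, and since $M$ is common to both sets so is $\sigma_k$; hence $\Sigma_S=\Sigma_T$ yields the purely combinatorial statement
\[
\{k-1,k,k+1\}\cap S=\emptyset \iff \{k-1,k,k+1\}\cap T=\emptyset
\]
for every integer $k$ with $1<k<M$; call this $(\star)$.

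Granting $(\star)$, the corollary is bookkeeping. For ``no element of $T$ lies strictly between $a$ and $b$'': for each $k$ with $a+2\le k\le b-2$ the triple $\{k-1,k,k+1\}$ sits inside the open interval $(a,b)$, hence misses $S$, hence by $(\star)$ misses $T$; as $k$ runs over this range (nonempty since $b-a\ge 4$) these triples cover exactly $\{a+1,\dots,b-1\}$, so $T\cap\{a+1,\dots,b-1\}=\emptyset$. For $a\in T$: if $a=1$ this is automatic by normalisation; if $a\ge 2$, apply $(\star)$ with $k=a+1\ge 3$, so that $\{a,a+1,a+2\}$ meets $S$ (it contains $a$) and therefore meets $T$, and since $a+1,a+2$ have just been excluded from $T$ we get $a\in T$. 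Symmetrically, taking $k=b-1\ge 4$ and using $b\in S$ together with $b-2,b-1\notin T$ gives $b\in T$.

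The only genuine content, and the step I expect to need the most care, is justifying $(\star)$: one must reread the proof of Theorem~\ref{quasirigidity} and check that it really produces the biconditional for every $k$ in the stated range, not merely the one-directional implication ``$k\in S\Rightarrow k$ or $k\pm1\in T$'' recorded as the theorem. It is worth stressing that $(\star)$ is strictly weaker than $\Sigma_S=\Sigma_T$ — it holds, for instance, for $S=\{\pm1,\pm3\}$ and $T=\{\pm1,\pm2,\pm3\}$, whose root sets differ by Theorem~\ref{weakrigidity} — which is exactly why the hypothesis $|a-b|>3$ cannot be dropped: it is precisely the condition making the triples straddling the gap wide enough to be resolved by $(\star)$.
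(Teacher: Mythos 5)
Your proof is correct and relies on exactly the same engine as the paper's: the observation that the proof of Theorem~\ref{quasirigidity} yields the full biconditional $\frac{1}{M+\sigma_k}\in\Sigma_S \iff \{k-1,k,k+1\}\cap S\neq\emptyset$ for $1<k<M$, which combined with $\Sigma_S=\Sigma_T$ gives the combinatorial equivalence you call $(\star)$. The paper's proof uses the same biconditional but threads it through directly: it first shows $a\in T$ by ruling out $a+1\in T$ and $a+2\in T$ via $\sigma_{a+2}$, then deduces $b\in T$ symmetrically, and finally excludes interior elements using both $\sigma_{k-1}$ and $\sigma_{k+1}$. Your version is a mild reorganization that isolates $(\star)$ as a reusable lemma and then does the interval bookkeeping in one sweep; the cost is the explicit normalization reduction to $1\le a<b\le M$ (which the paper leaves implicit), and the benefit is that the case analysis for $a\in T$ and $b\in T$ becomes a one-liner once $T\cap\{a+1,\dots,b-1\}=\emptyset$ is known. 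Both are sound; the difference is purely expository.
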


\begin{proof}
Let $M = \max(S) = \max(T)$. Since $a \in S$ we know that $\frac{1}{M+ \sigma_{a+1}} \in \Sigma_S$ and hence either $a, a+1,$ or $a+2 \in T$. If we had that $a+1 \in T$ or $a+2 \in T$ then we would have $\frac{1}{M+\sigma_{a+2}} \in \Sigma_T$ and hence in $\Sigma_S$ which implies $a+1, a+2,$ or $a+3$ is in $S$. But by assumption we know that $a+1, a+2 a+3$ are all not in $S$. Thus $a \in T$. By similar reasoning we deduce $b \in T$.

If there were some $k$ between $a$ and $b$ which was in $T$ then both $\frac{1}{M+\sigma_{k+1}}$ and $\frac{1}{M+\sigma_{k-1}}$ would be in $\Sigma_S$ at least one of which is impossible.
\end{proof}

\subsection{Geometric Interpretation for the early rigidity}

The theorem on early rigidity was presented previously as a purely analytic result. Here we present a more geometric and heuristic explanation for why one would expect the theorem to be true.

\begin{figure}[!h]
\centering
\includegraphics[scale=0.7]{./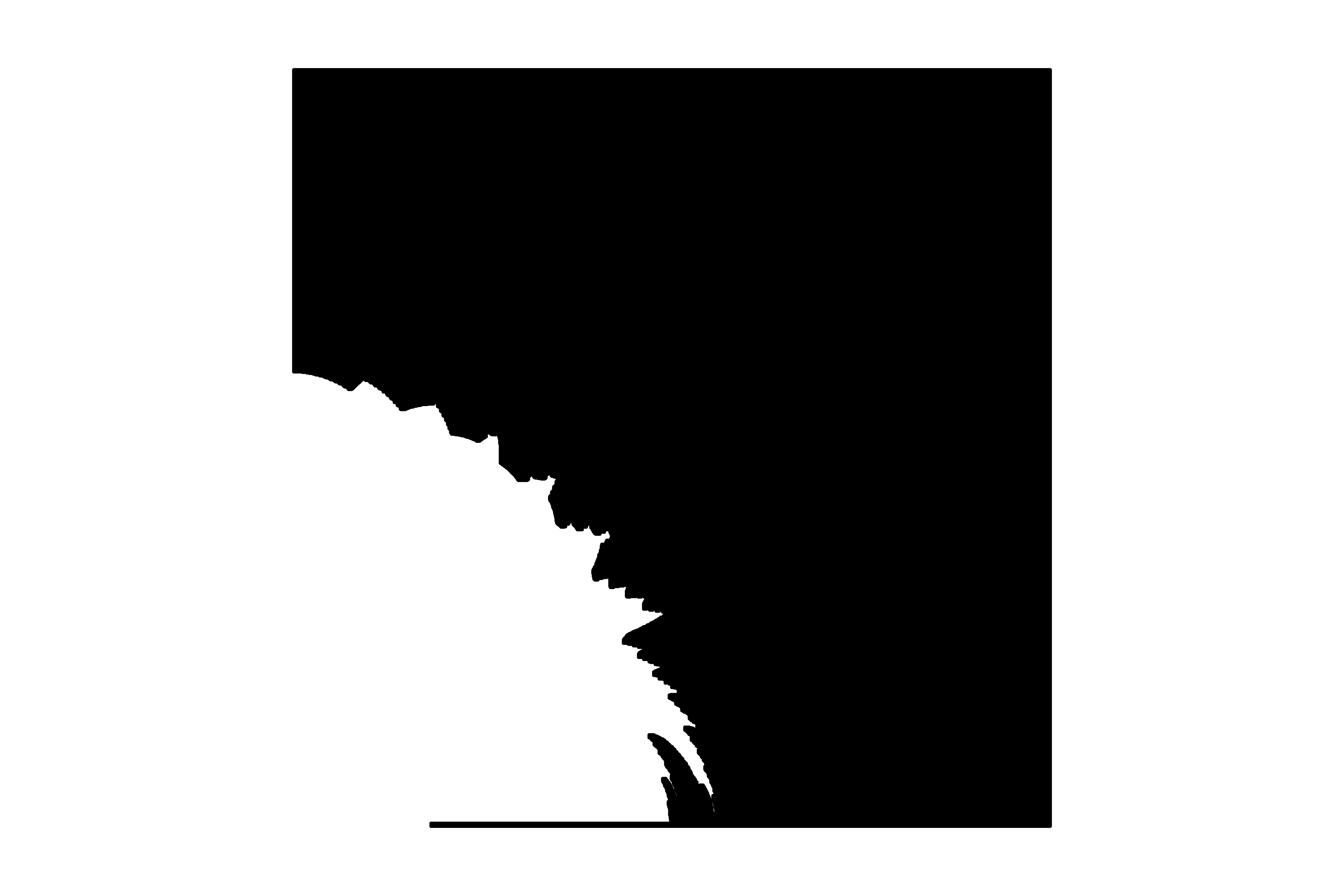}
\caption{The set of roots of power series with coefficients any integer from $-10$ to $10$.}
\label{fig:0 to 10}
\end{figure}

Consider \textbf{Figure \ref{fig:0 to 10}} showing the set of roots of power series with coefficients any integer from $-10$ to $10$. Note that $2\sqrt{10}+1$ is approximately equal to $7.324$. Thus we know that the coefficients $2$ through $7$ are rigid.

\begin{figure}[!h]
\centering
\includegraphics[scale=0.7]{./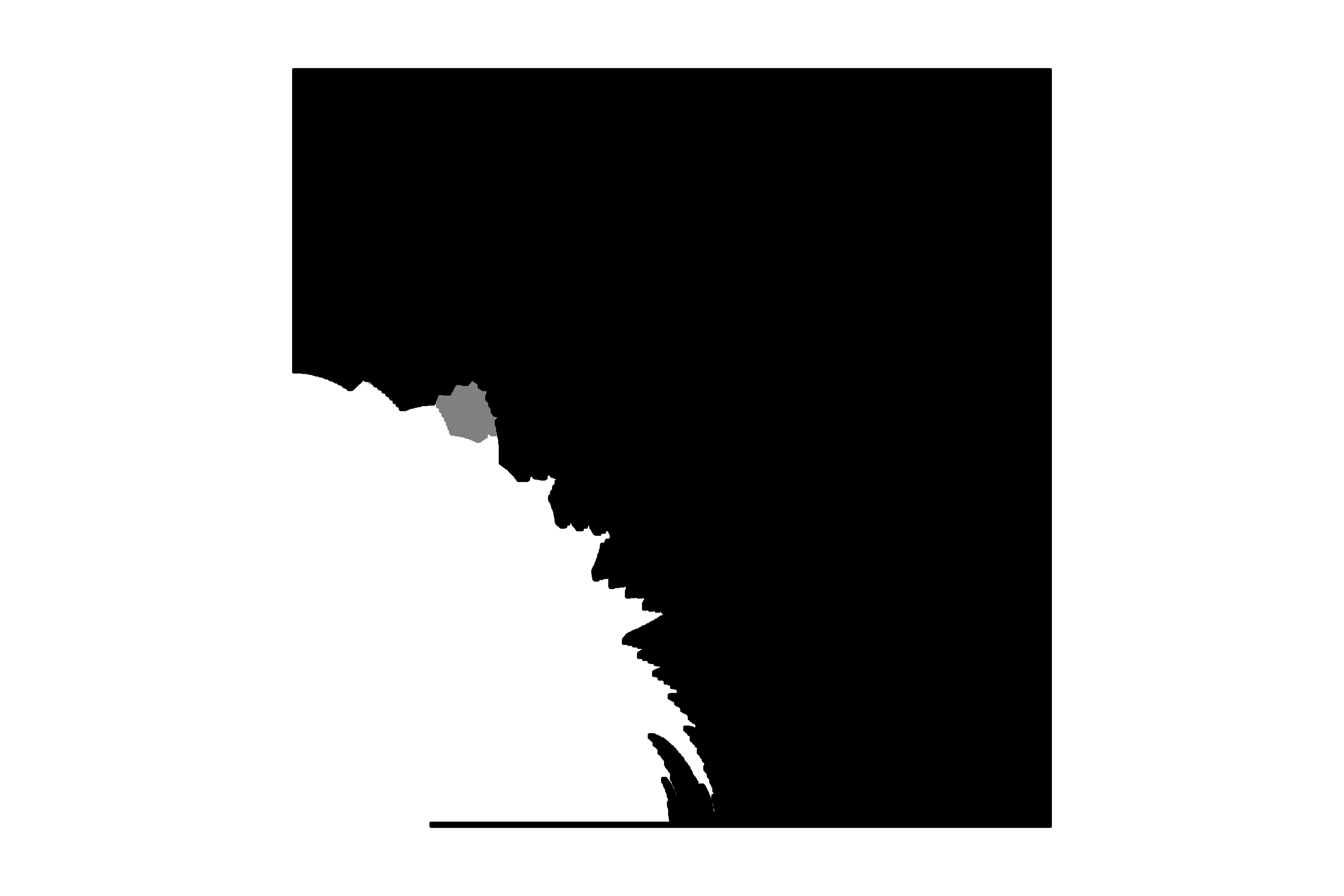}
\caption{In black we see the set of roots of power series with coefficients any integer from $-10$ to $10$ except $\pm 2$. In grey are the roots we get if we also allow $\pm 2$.}
\label{fig:0 to 10 no 2}
\end{figure}

\begin{figure}[!h]
\centering
\includegraphics[scale=0.7]{./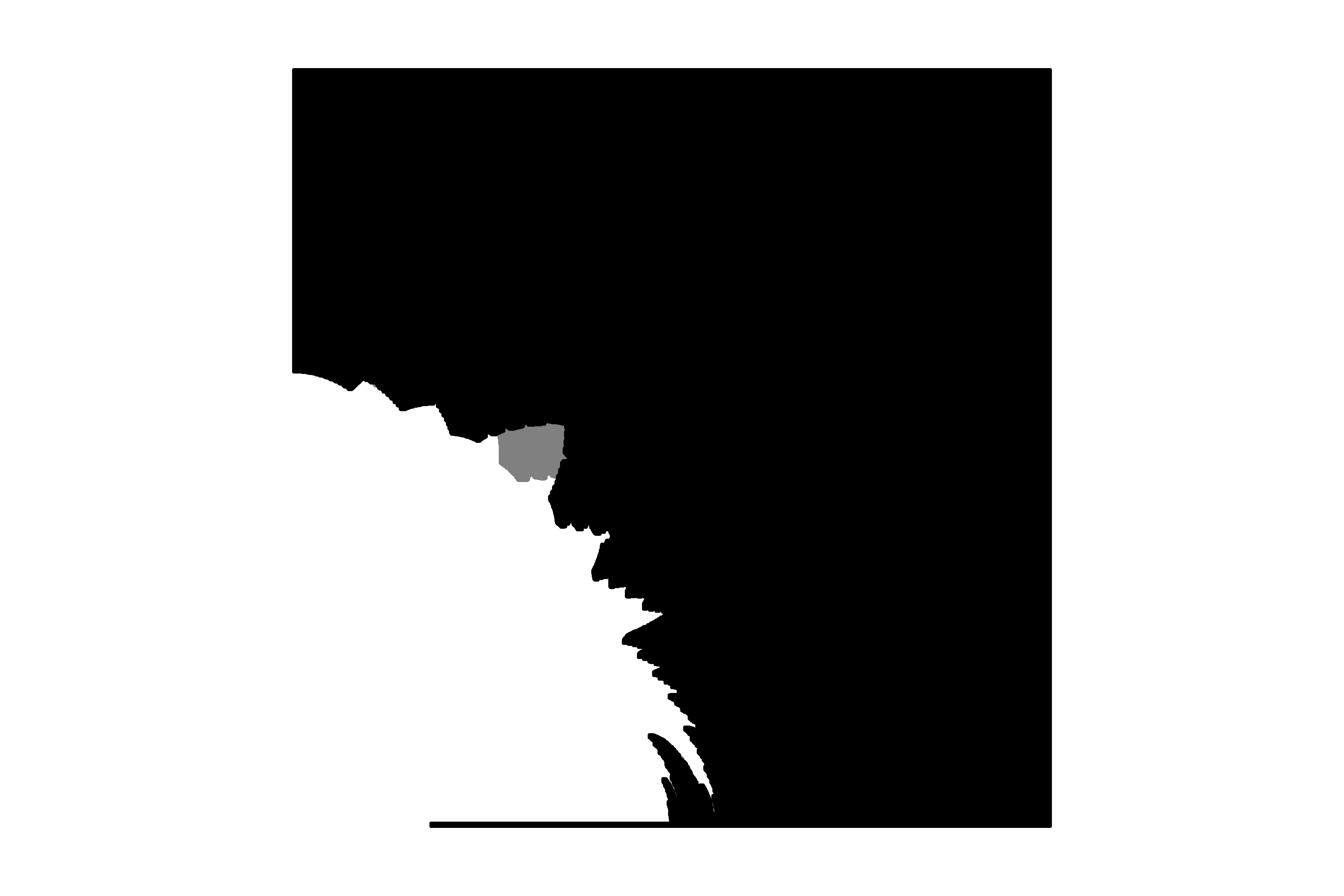}
\caption{In black we see the set of roots of power series with coefficients any integer from $-10$ to $10$ except $\pm 3$. In grey are the roots we get if we also allow $\pm 3$.}
\label{fig:0 to 10 no 3}
\end{figure}

Studying the shape of the set of roots one immediately observes what for a lack of better term we will call `spikes'. Starting from the imaginary axis around $\frac{i}{\sqrt{11}}$ the roots start to get deeper into the unit disc, then recede back, then get deeper, then recede over and over. A quick count shows that there are $8$ such `spikes', two more than the number of digits proven to be rigid. In fact, a less formal understanding of the early rigidity theorem is that the set of roots of power series with coefficients $-M$ through $M$ will always have $\lceil 2\sqrt{M}+1 \rceil$ `spikes'.

To see how, consider \textbf{Figure \ref{fig:0 to 10 no 2}} which shows what roots are lost if we allow all digits from $-10$ to $10$ except $\pm 2$. Two things are immediately apparent. Firstly, one of the `spikes' (the third ordered clockwise starting at $\frac{i}{\sqrt{11}}$) is removed. Secondly, no other region of the set of roots seems to be  significantly affected (if you zoom in enough there are minor differences in other areas). Clearly something about this `spike' in the root set is related to having a $2$ in the coefficient set. This, in a sense, is what we have proven above.

In \textbf{Figure \ref{fig:0 to 10 no 3}} we see the same behaviour if we remove $\pm 3$. The same phenomenon will happen for $\pm 4, \pm 5, \pm 6$, and $\pm 7$.

To understand this phenomenon consider a power series $P(z) = \sum_{j=0}^{\infty} p_j z^j$. How do the various numbers $p_j$ affect the roots of $P(z)$? Since $P(z)$ could have many roots this is not an easy question to answer so we instead ask: how do the various numbers $p_j$ effect extremely small complex roots of $P(z)$? 

Clearly the choice of $p_0$ affects the smallest possible absolute value of the root. In fact, the ratio between $p_0$ and the later choices $p_2, p_3, ...$ more or less determine the smallest possible absolute value. $p_1$ on the other hand plays a rather distinct role. It chooses the angle.

To see why, consider some arbitrary non-real root $\lambda = \frac{1}{r}e^{i\theta}$ of a power series $1 + \sum_{j=1}^\infty p_jz^j$. By the two step recursion from \textbf{Theorem \ref{2-step}} we know that $q_0 = p_0 = 1$ and hence that $q_1 = p_1 + 2r\cos(\theta)q_0 = p_1 + 2r\cos(\theta)$. Since for every $j$, $|q_j| \leqslant E_2 = \frac{M}{(r-1)^2}$ we have that:

\[
\frac{-p_1 - E_2}{2r} \leqslant \cos(\theta) \leqslant \frac{-p_1+E_2}{2r}
\]

giving a bound on $\theta$ dependent on $M$ and on $r$. The above holds for any choice of root however how tight it is depends on $M$ and on $r$. When $r$ is large relative to $M$, say $\sqrt{M} \leqslant r \leqslant \sqrt{M}+1$ then as $M$ grows, $E_2$ approaches $1$ and hence the difference between the bounds behaves like $\frac{1}{r}$ and can be made as close to $0$ as required. This means that for roots that are relatively deep into the unit disc, the $p_1$ coefficient plays a distinguished role in choosing $\cos(\theta)$. Moreover, when $M$ and $p_1$ are fixed, growing $r$ brings the bounds closer together giving fewer possibilities for $\cos(\theta)$ causing this `spike' structure to form.

On the other hand, if we read the same recursion but instead fix $r$, $M$, and $\theta$ then we find that:

\[
-2r\cos(\theta) - E_2 \leqslant p_1 \leqslant -2r\cos(\theta) + E_2.
\]

The difference in these bounds is $2E_2$ so for $r$ and $M$ large enough, there are at most $3$ choices for $p_1$. Further analysis can often disqualify two of these choices (as in \textbf{Theorem \ref{weakrigidity}}) and in practice for $r$ and $M$ large the only viable choice will be the $p_1$ closest to the upper bound.

This also explains why there are more `spikes' then there are rigid coefficients in the argument. Recall that removing $\pm 2$ from the coefficient set was the same as removing the third `spike' from the root set. The first and second `spikes', it turns out, will correspond to having a $0$ or a $\pm 1$ respectively placed in the $p_1$ position. In fact, a similar argument as the early rigidity could be used to show that $\Sigma_{ \{ 0, \pm 1, ..., \pm M \} } \neq \Sigma_{ \{ \pm1, ..., \pm M \}}$ (on the other hand, showing $\Sigma_{S \cup \{ 0 \}} \neq \Sigma_{S \setminus \{ 0 \}}$ for a general set $S$ seems much harder). Clearly $1$ is rigid for normalized sets but an argument like in the early rigidity proof would not make sense for discussing removing $1$.

The relationship between the `spikes' and the choice of $p_1$ is also why there are two more `spikes' than rigid coefficients in the argument. Indeed, if each `spike' really corresponds to a difference choice of $p_1$ then the first and second `spikes' will correspond to taking roots of power series with $p_1 = 0$ and $p_1 = -1$ respectively. In fact, a similar argument as the early rigidity could be used to show that $\Sigma_{ \{ 0, \pm 1, ..., \pm M \} } \neq \Sigma_{ \{ \pm1, ..., \pm M \}}$ (on the other hand, showing $\Sigma_{S \cup \{ 0 \}} \neq \Sigma_{S \setminus \{ 0 \}}$ for a general set $S$ seems much harder). Clearly $1$ is rigid for normalized sets but an argument like in the early rigidity proof would not make sense for discussing removing $1$.

Using this observation we can make a more rigorous definition for `large' spikes. Let $S = \{0, \pm 1, \pm 2, ..., \pm M \}$ be a set of integers for some $M$. For each $1 < k < 2\sqrt{M}+1$ we define $\lambda_k$ be the root of $1 - (k+1)z + (M+k)z^2$ in the first quadrant (that is, the same root studied in \textbf{Theorem \ref{weakrigidity}}). Then we can define the $k^{th}$ spike to be the connected component of $\Sigma_S \setminus \Sigma_{S \setminus \{ \pm k \} }$ containing $\lambda_k$. Note that this definition of $k^{th}$ spike does not work for the $0^{th}$ and $1^{st}$ spike. Alternatively, we could define the $k^{th}$ spike as the intersection of $\Sigma_S$ with:

\[ 
\left \{ \lambda = \frac{1}{r}e^{i\theta} \in \mathbb{D} \mid \sqrt{M} \leqslant r \leqslant \sqrt{M}+1, \frac{-k - E_2(\lambda, S)}{2r} \leqslant \cos(\theta) \leqslant \frac{-k+E_2(\lambda, S)}{2r} \right \}.
\]

This definition works even for $k = 0$ and $1$ though it is larger than the previous characterization. Moreover, using this definition we would have that the spikes overlap.

\begin{figure}[!h]
\centering
\includegraphics[scale=0.7]{./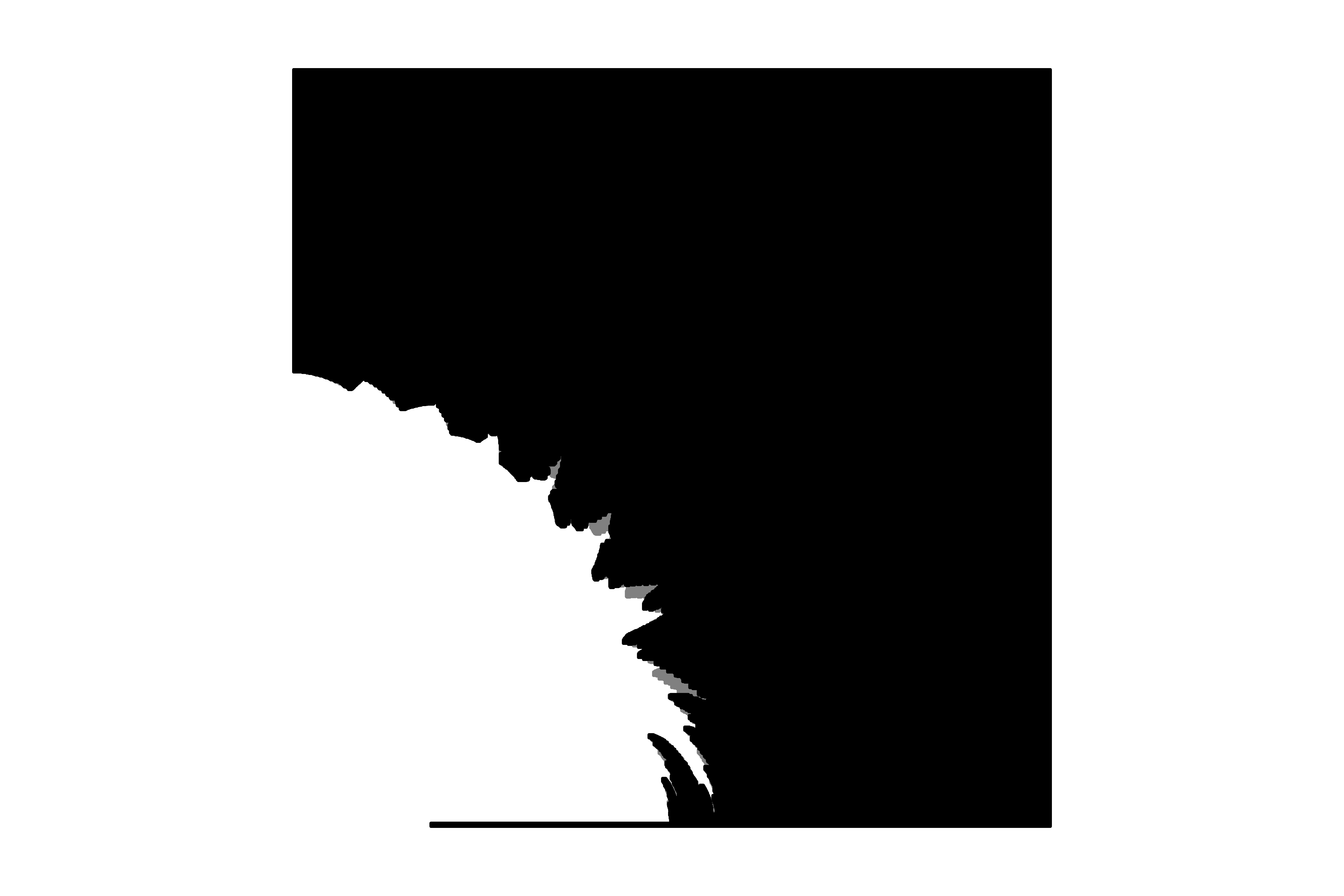}
\caption{In black we see the set of roots of power series with coefficients any integer from $-10$ to $10$ except $\pm 8$. In grey are the roots we get if we also allow $\pm 8$.}
\label{fig:0 to 10 no 8}
\end{figure}

\begin{figure}[!h]
\centering
\includegraphics[scale=0.7]{./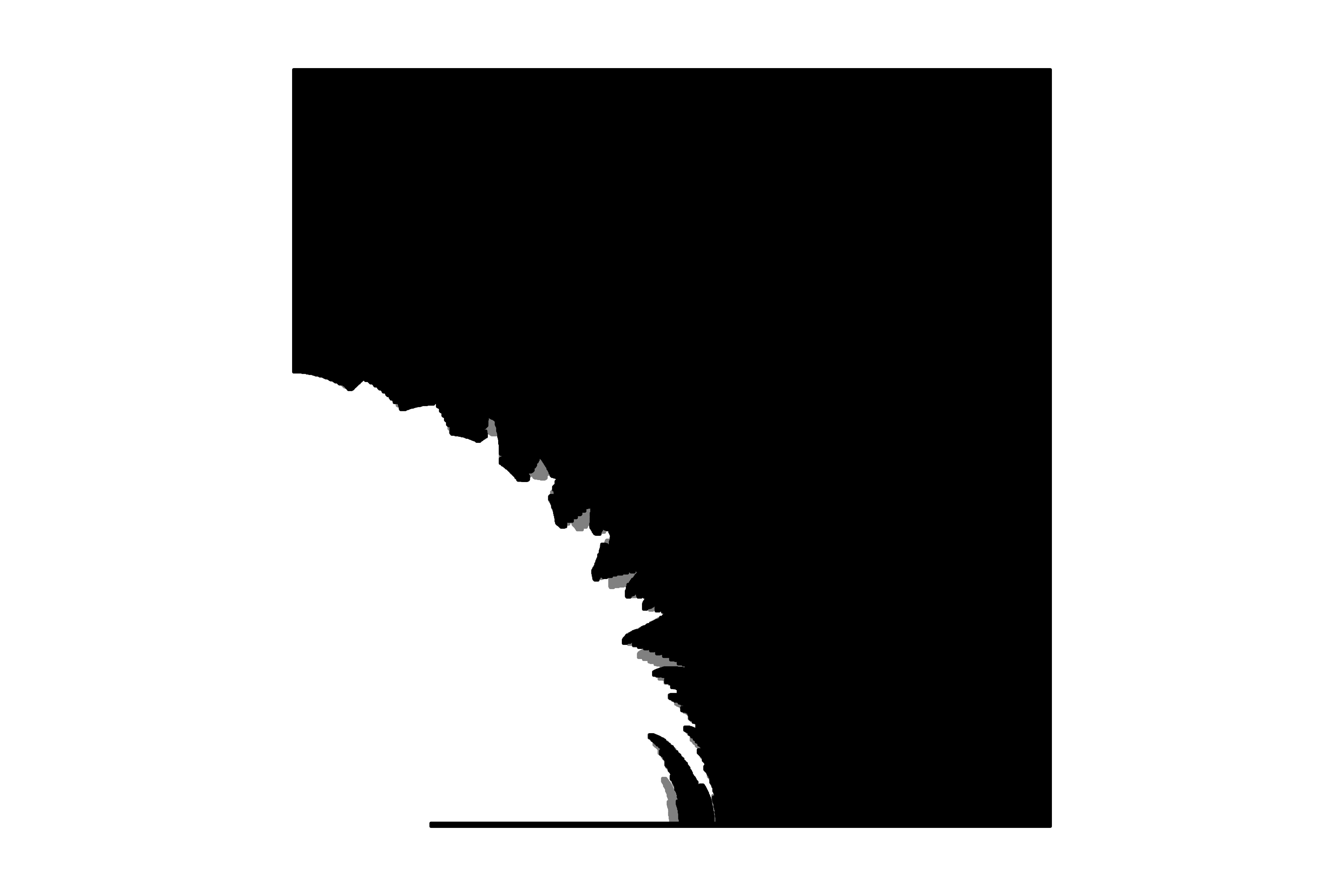}
\caption{In black we see the set of roots of power series with coefficients any integer from $-10$ to $10$ except $\pm 9$. In grey are the roots we get if we also allow $\pm 9$.}
\label{fig:0 to 10 no 9}
\end{figure}

Finally, let us consider the `larger' coefficients $\pm8$ and $\pm 9$. If we look at \textbf{Figures \ref{fig:0 to 10 no 8}} and \textbf{\ref{fig:0 to 10 no 9}}, we see that when we remove $\pm 8$ or $\pm 9$ no `large spike' gets removed, instead we see only minor differences in the finer details of a few spikes. In fact, the `large spikes' almost appear to be made up of smaller spikes which these `larger' coefficients effect. Unlike with the smaller coefficients we do not have any rigorous proof or description of this phenomenon. If, however, we assume this pattern holds for arbitrary sets $S$ then it would mean the small coefficients of $S$ essentially determine these `large spikes' while the larger coefficients determine minor detailing. By smaller we mean the coefficients $k$ satisfying $|k| < 2\sqrt{\max(S)} + 1$. A rigorous description of which coefficients are `large' seems more elusive however it would be of great interest if one could give an exact description of this phenomenon. In particular, it would be nice to know if any coefficient $k$ was always either `small' (in the sense above), or `large' (in the sense of effecting these `smaller spikes'). If the `small' and `large' coefficients do not overlap then it may be that the coefficients in between are not rigid and can be removed from the coefficient set without effecting the corresponding set of roots.

\end{document}